\newtheorem{fed}{\textbf{Definition}}[section]
\newtheorem{lemma}[fed]{\textbf{Lemma}}
\newtheorem{ex}[fed]{\textbf{Example}}
\newtheorem{rem}[fed]{\textbf{Remark}}
\newtheorem{prop}[fed]{\textbf{Proposition}}
\newtheorem{cor}[fed]{\textbf{Corollary}}
\newcommand{\N}{\mathbb{N}}
\newcommand{\Z}{\mathbb{Z}}
\newcommand{\R}{\mathbb{R}}
\newcommand{\C}{\mathbb{C}}
\newcommand{\wt}{\widetilde}
\newcommand{\JJ}{\mathcal{J}}
\newcommand{\eps}{\varepsilon}
\newcommand{\p}{\partial}
\begin{document}

\title{$J^+$-invariants for planar two-center Stark-Zeeman systems}
\author{Kai Cieliebak\thanks{Universit\"at Augsburg, kai.cieliebak@math.uni-augsburg.de, partially supported by DFG grant CI 45/8-1.}, Urs Frauenfelder\thanks{Universit\"at Augsburg, urs.frauenfelder@math.uni-augsburg.de, partially supported by DFG grant FR 2637/2-2}., Lei Zhao\thanks{Universit\"at Augsburg, lei.zhao@math.uni-augsburg.de, partially supported by DFG grant ZH 605/1-1.}}
\maketitle

\begin{abstract}
In this paper, we introduce the notion of planar two-center Stark-Zeeman systems and define four $J^{+}$-like invariants for their periodic orbits. The construction is based on a previous construction for planar one-center Stark-Zeeman system in~\cite{cieliebak-frauenfelder-koert} as well as Levi-Civita and Birkhoff regularizations. We analyze the relationship among these invariants and show that they are largely independent, based on a new construction called interior connected sum. 
\end{abstract}

\tableofcontents

\parindent=0pt
\parskip=4pt

\section{Introduction}

The notion of a (planar) Stark-Zeeman system was introduced in~\cite{cieliebak-frauenfelder-koert}. It describes the motion of
an electron in the plane attracted by a proton and subject to exterior electric and magnetic fields. Since Newton's law of gravitation takes the same form as  Coulomb's law, 
we can as well think of the electron as a light body gravitationally attracted by a proton as the heavy body. The Lorentz force from the magnetic field in this interpretation then corresponds to the Coriolis force. Many important systems from classical and celestial mechanics are Stark-Zeeman systems.

In a Stark-Zeeman system, the electron can collide with the proton, which causes singularities. Despite of this, it is classically known that such singularities due to two-body collisions can be regularized. In~\cite{cieliebak-frauenfelder-koert},  two invariants $\mathcal{J}_1$ and $\mathcal{J}_2$ were defined for families of regularized periodic orbits in Stark-Zeeman systems as immersed planar curves without direct self-tangency, based on Arnold's $J^+$-invariant \cite{arnold}, one for the unregularized system and another one for its Levi-Civita regularization.


In this paper we introduce the notion of a (planar) two-center Stark-Zeeman system. In this case the electron is attracted by two protons and the energy is high enough that the electron can collide with both of them, but not so high that the electron may escape from being close enough to the protons. An example of a two-center Stark-Zeeman system is the restricted three-body problem for energies between the first and second critical value. 

One of our motivations for defining $J^{+}$-type invariants of planar periodic orbits is to gain a better understanding about whether periodic orbits in given Stark-Zeeman systems
can be put in families of interpolating Stark-Zeeman systems. We shall introduce four $J^+$-like invariants for periodic orbits in a planar two-center Stark-Zeeman system. The generalization of the invariant $\mathcal{J}_1$ is straightforward. Since we have now two protons, we can consider the Levi-Civita regularization at either one of them.
This leads to two generalizations of the invariant $\mathcal{J}_2$ which we will refer to as $\mathcal{J}_E$ and
$\mathcal{J}_M$. The reason for this terminology is that in the interpretation of the restricted three-body problem 
one proton corresponds to the earth $E$ and the other proton corresponds to the moon $M$. For two-center Stark-Zeeman systems
there is a regularization due to Birkhoff which simultanuously regularizes the collisions with both primaries, i.e., with the
Earth and the Moon. The Birkhoff regularization gives rise to a fourth pair of invariants which we refer to as $(\mathcal{J}_{E,M}, n)$. We also analyze their relationships: depending on the parity of the winding numbers around $E$ and $M$ as well as their sums, sometimes one may express one of the invariants in terms of the others, while they are largely independent otherwise. The analysis is based on a construction called interior connected sum, which can be thought of as the inversion of the connected sum construction of a homotopically nontrivial immersed loop with an exterior homotopically trivial loop.


\section{Two-center Stark-Zeeman systems}\label{sec:2C}

Let $E, M \in \mathbb{R}^2 \cong \C$ be two distinct points which
we refer to as the Earth and Moon. Suppose that $\mu_E, \mu_M>0$. Let
$$V_E \colon \mathbb{R}^2 \setminus \{E\} \to \mathbb{R} \quad
q \mapsto -\frac{\mu_E}{|q-E|}, \qquad V_M \colon
\mathbb{R}^2 \setminus \{M\} \to \mathbb{R}, \quad
q \mapsto -\frac{\mu_M}{|q-M|}$$
be the gravitational potentials centered at the Earth and the Moon respectively.
The parameters $\mu_E$ and $\mu_M$ thus represent the masses of the Earth and the Moon respectively. 
Alternatively one may think of $V_E$ and $V_M$ as Coulomb potentials
under which the interpretations of the parameters $\mu_E$ and $\mu_M$ become charges. 

Assume that $U_0 \subset \mathbb{R}^2$ is an open set containing 
$E$ and $M$ and
$$V_1 \colon U_0 \to \mathbb{R}$$
is a smooth function.  Abbreviate
$$U:=U_0 \setminus \{E,M\}$$
and define
$$V:=V_E+V_M+V_1 \colon U \to \mathbb{R}.$$
The function $V_1$ can be interpreted as an additional potential which gives rise to additional position-dependent forces other than the gravitational forces of the Earth and the Moon. 

Velocity-dependent forces like the Lorentz force of a magnetic field or the Coriolis force can be modelled by a twist in the standard symplectic form of the cotangent bundle of $U$: 
For a function $\mathcal{B} \in C^\infty(U_0,\mathbb{R})$, let
$$\sigma_\mathcal{B}=\mathcal{B} \, dq_1 \wedge dq_2 \in \Omega^2(U_0)$$
and define the twisted symplectic form
$$\omega_\mathcal{B}=\sum_{i=1}^2 dp_i \wedge dq_i+\pi^* \sigma_\mathcal{B} \in 
\Omega^2(T^* U_0),$$
where $\pi \colon T^*U_0 \to U_0$ is the footpoint projection. 

We further choose a smooth Riemannian metric $g$ on $TU_0$. 
Let $g^*$ be its dual metric on the cotangent bundle $T^{*} U_{0}$ of $U_0$. We define the Hamiltonian
$$H=H_{V,g} \colon T^*U \to \mathbb{R}, \quad
(q,p) \mapsto \frac{1}{2}\|p\|^2_{g^*_q}+V(q).$$
The dynamics of the Stark-Zeeman system is given by the flow of the Hamiltonian vector field $X^B_{V,g}$ implicitly defined by
$$dH_{V,g}=\omega_\mathcal{B}(\cdot, X^\mathcal{B}_{V,g}).$$
As the Hamiltonian  is autonomous (i.e., independent of time), it is preserved under the flow of its Hamiltonian vector field (conservation of energy). We fix an energy value $c \in \mathbb{R}$ and consider a connected component
$$\Sigma_c\subset H^{-1}(c)$$
of the energy hypersurface on level $c$. The \emph{Hill's region} is defined as its image under the footpoint projection
$$\mathfrak{K}_c=\pi(\Sigma_c)\subset\{q \in U\mid V(q) \leq c\}.$$
We make the following two assumptions:
\begin{description}
 \item[C(i)] $c$ is a regular value of $H$ (or equivalently of $V$);
 \item[C(ii)] $\mathfrak{K}_c\cup\{E,M\}$ is bounded and simply connected.
\end{description}

\section{Examples of planar 2-center Stark-Zeeman systems}

{In this section we present a short list of classical planar 2-center Stark-Zeeman systems.}

\subsection{The planar circular restricted three-body problem}
A first system which fits into this category is the planar circular restricted three-body problem in a rotating frame so that $E$ and $M$ are fixed at the positions $(-\mu_M,0)$ and $(\mu_E,0)$, respectively. It is described by the Hamiltonian
$$H=\dfrac{|p|^{2}}{2} + V_{E} + V_{M} + V_{1}$$
with masses $\mu_{E},\mu_{M}>0$, which we can normalize by {setting} $\mu_{E}+\mu_{M}=1$. Here $V_{1}=\dfrac{|q|^{2}}{2}$ is the potential which generates the centrifugal force around the center of mass of $E$ and $M$, and the Coriolis force in the rotating frame is taken into account by the twisted symplectic form
$$\omega_{\mathcal{B}}=d (p_{1}-q_{2}) \wedge d q_{1} + d (p_{2} + q_{1}) \wedge d q_{2} = d p_{1} \wedge d q_{1} + d p_{2} \wedge d q_{2} + 2 d q_{1} \wedge d q_{2}.$$
There is a vast literature on this problem which we will not even try to list. Let us just mention that when the energy of the system is below the first critical value the Hill's region has three connected components: one around the Earth, one around the Moon, and another one ``around infinity''. When the energy $c$ lies between the first and the second critical values (counted from below), the two bounded connected components around the Earth and the Moon merge into one bounded component $\Sigma_c$ of the energy hypersurface satisfying assumptions $\mathbf{C(i)}$ and $\mathbf{C(ii)}$. In this case the corresponding Hill's region is actually homeomorphic to the connected sum of two discs, each with a point removed.
Above the second critical value, assumption $\mathbf{C(ii)}$ no longer holds.

\subsection{The charged planar circular restricted three-body problem}
The system is defined as in the planar circular restricted three-body problem, except that we no longer require $\mu_{E}, \mu_{M}$ to be positive. Instead they can be either positive or negative.  Such a system then models the motion of a charged particle in a magnetic field {and the electric field generated by the two charges. }
Note that when $\mu_{E}, \mu_{M}$ are not both positive at least one of the force fields is repulsive. Therefore, such a system on a fixed regular energy hypersurface may not satisfy assumption $\mathbf{C(ii)}$. 

\subsection{Euler's two-center problem in the plane}
Euler's two-center problem describes a particle moving in the gravitational field generated by two fixed bodies (the centers).  In the plane this corresponds to the case where $\mu_{E}, \mu_{M}>0$, $V_{1}\equiv 0$, and $\omega_{B}=\omega$ is the standard symplectic form. 
It was already known to Euler~\cite{euler} that this problem is separable in suitable coordinates and thus integrable. Regular energy hypersurfaces above the first critical value with negative energy satisfy assumptions $\mathbf{C(i), C(ii)}$, while regular energy hypersurfaces with positive energy satisfy assumption $\mathbf{C(i)}$ but not $\mathbf{C(ii)}$.


\subsection{Lagrange's modification of Euler's two-center problem}
The (planar) Lagrange problem is obtained from Euler's two-center problem by adding a quadratic potential $V_{1}=\dfrac{|q|^{2}}{2}$ at the midpoint of the two centers (which we may put at the origin). By the analysis of Lagrange \cite{Lagrange},  this system is also integrable.  

\subsection{Euler's problem and Lagrange's modification on a sphere or pseudosphere} 
Euler's two-center problem in the plane admits a generalization to the sphere and the pseudosphere, with the two-body potential replaced by $\mu \cot(\theta)$ and $\mu\, \hbox{coth}(\theta)$, respectively. {The system on the pseudosphere was defined and discussed in~\cite{Killing}, see also~\cite{vozmischeva}.} 
On the sphere the antipodal point of each center is again a center, with the strength constant $-\mu$. There are thus overall four centers on the sphere, two attractive and two repulsive. 

A new interpretation of the integrability of Euler's problem on the plane from the existence of Euler's problem on the sphere via central projection was {established} by A.~Albouy~\cite{albouy}. He actually realized both problems as quasi-bi-Hamiltonian systems, i.e., systems admitting two different Hamiltonian descriptions up to a time change. The projection of the spherical Hamiltonian then becomes a second conserved quantity of the planar system and vice versa. Moreover, in a gnomonic chart ({given by the central projection from the center of the sphere}) the spherical system takes the form of a Stark-Zeeman system with exactly the same potential as the planar system, just with a different kinetic energy. Lagrange's modification has also been discussed within this approach~\cite{albouy}. 
These systems in a gnomonic chart thus provide examples of two-center Stark-Zeeman systems with non-standard kinetic parts. Note that if instead we use a chart defined by stereographic projection, then in this chart the metric is conformal to the Euclidean metric and the singularities of these systems are asymptotically of Newtonian type, which allows us to treat these systems as examples of two-center Stark-Zeeman systems to which all the discussion below will apply.

\section{Partial and simultaneous regularizations of double collisions in planar 2-center Stark-Zeeman systems}

For a (planar) two-center Stark-Zeeman system, energy hypersurfaces which project to bounded Hill's regions are still noncompact due to the presence of collisions with the primaries. Nevertheless, we know that such collisions can be regularized, either individually or simultaneously. In this section we shall present {adaptations of} the Levi-Civita regularization for regularizing only one collision, and Birkhoff's simultaneous regularization of both collisions. There exist {also} other regularizations, but the Levi-Civita and Birkohoff regularizations are most suitable for our investigation of closed orbits in these systems via invariants of immersed planar loops. 

\subsection{Partial Levi-Civita regularizations}
We recall the Levi-Civita regularization of the planar Kepler problem. After normalization of the masses, the Hamiltonian of the system is given by 
$$H(q,p)=\dfrac{|p|^{2}}{2}-\dfrac{1}{|q|}$$
for $(q,p) \in \C\setminus \{0\} \times \C$.
To regularize the singularity at $q=0$, we fix an energy $c=-f<0$ and consider the Hamiltonian flow on 
$\Sigma_c=H^{-1}(c)$.
We change time on this energy hypersurface by rescaling the Hamiltonian to
$$\wt{H}(q,p):=|q| \bigl(H(q,p)-c\bigr)=\dfrac{|q||p|^{2}}{2} +f |q| -1.$$
We now consider the complex square mapping
$$L:\C\setminus\{0\} \to \C\setminus\{0\}, \qquad z \mapsto z^{2}.$$
Its cotangent lift is the symplectomorphism
$$T^*L: \C \setminus \{0\} \times \C \to  \C  \setminus \{0\} \times \C, \qquad (z, w) \mapsto (z^{2}, \dfrac{w}{2 \bar{z}}).$$
The regularized Hamiltonian $K$ is defined by pulling back $\wt{H}$ under $T^*L$,
$$K(z,w):=\wt H\circ T^*L (z,w)=\dfrac{|w|^{2}}{8} +f |z|^{2} -1. $$
The collision locus $\{q=0\}$ in the closure of $\Sigma_c$ is transformed to the set $\{z=0\}$ in the regular energy hypersurface $\{K=0\}$, which is no longer singular. These collisions are thus regularized. 

The Levi-Civita regularization extends to smoothly perturbed Kepler problems, in particular to all 1-center Stark-Zeeman systems. It applies also to 2-center Stark-Zeeman systems when we want to regularize only double collisions at either $E$ or $M$. We shall call these the {\em partial regularizations} with respect to $E$ and $M$ respectively. The other singularity remains non-regularized and, since the map $L$ is 2-to-1, the non-regularized singularity doubles to two singularities  in the partially regularized system. The two new singularities are still asymptotically of the type of a Newtonian type singularity: To see this, assume that the non-regularized singularity is located at $q=1$ and the potential is of the form $-\dfrac{1}{|q-1|}$. It contributes to the regularized system an additional term $-\dfrac{|z|^{2}}{|z^{2}-1|}=-\dfrac{|z|^{2}}{|z+1| |z-1|}$, so the two new singularities are located at $z=\pm 1$ and are of Newtonian type. We remark that this partial regularization procedure can thus be iterated, which is however not what we are going to investigate here.
In addition, we remark that the regularization procedure naturally extends to the case where the kinetic part of the Hamiltonian is given by a metric conformal to the standard Euclidean metric.

\subsection{Waldvogel's interpretation of Birkhoff's regularization}
We now present a regularization due to Birkhoff  {\cite{birkhoff}} of planar two-center Stark-Zeeman systems. 
By normalization, we put $E$ and $M$ at $-1$ and $1$, respectively. 

In \cite{waldvogel}, Waldvogel remarked that the complex square mapping $L(z)=z^{2}$
used in the Levi-Civita regularization extends to a conformal mapping from the Riemann sphere $\C \cup \{\infty\}$ to itself fixing $0$ and $\infty$ which, in Waldvogel's words \cite{waldvogel}, also ``regularizes'' 
a ``similar singularity'' at infinity. 
With this in mind, Waldvogel interpreted the Birkhoff regularization mapping  
\begin{equation}\label{eq:B}
   B:\C^*=\C\setminus\{0\}\to\C,\qquad B(z)=\frac{1}{2} (z + 1/z)
\end{equation}
%
%
as the conjugation $B = T^{-1} \circ L \circ T$ of the complex square mapping $L$ by the M\"obius transformation
$$
   T(z)=1-\dfrac{2}{1-z}=T^{-1}(z)
$$
sending $-1$ to $0$ and $+1$ to $\infty$. 
Thus $B$ extends to a branched double cover $\C\cup\{\infty\}\to\C\cup\{\infty\}$, sending $0$ and $\infty$ to $\infty$, with two branch points at $\pm 1$ of values $\pm 1$. See Figure \ref{fig:Birkhoff}.
\begin{figure}
\center
\includegraphics[width=100mm]{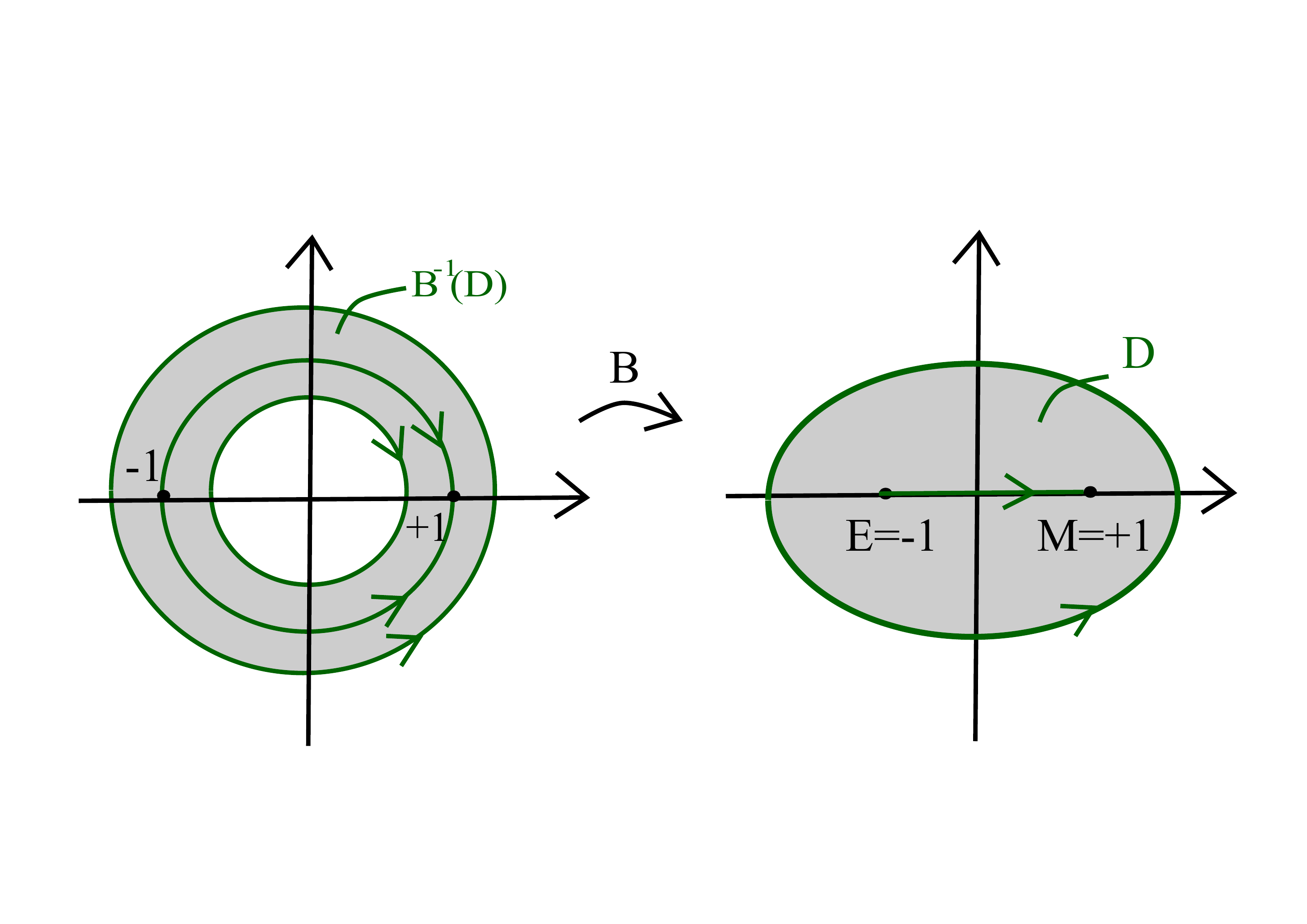}
\caption{Birkhoff regularization}
\label{fig:Birkhoff}
\end{figure}
The cotangent lift of $B$ is given by
\begin{equation}\label{eq:T*B}
   T^*B:T^*\C^*\to T^*\C,\qquad (z,w)\mapsto (q,p)=\Bigl(\frac{z^2+1}{2z},\dfrac{2 \bar{z}^{2}}{\bar{z}^{2}-1} w\Bigr).
\end{equation}
We will now explain the regularization of two-center Stark-Zeeman systems with this method, with Euler's two-center problem as a first example.

\subsection{Birkhoff simultaneous regularization of Euler's two-center problem}
In complex variables $(q,p) \in \C\setminus \{0, 1\} \times \C$, the Hamiltonian of the two-center problem is
$$H=\dfrac{|p|^{2}}{2}-\dfrac{\mu}{|q-1|}-\dfrac{1-\mu}{|q+1|}.$$
After fixing a negative energy {$c=-f$}
and rescaling time on this energy surface, we get that the slowed-down flow on this energy surface is governed by the following Hamiltonian restricted to the zero-energy level:
$$|q-1|\, |q+1| (H+f)=\dfrac{|q-1|\, |q+1||p|^{2}}{2}-\mu |q+1| -(1-\mu)|q-1| + f |q-1|\, |q+1|.$$
Substituting $(q,p)$ by $(z,w)$ via~\eqref{eq:T*B} and further dividing by $|z|^{2}$ results in the Hamiltonian
$$K(z,w)=\dfrac{|w|^{2}}{2} - \dfrac{ \mu |z+1|^{2}}{2 |z|^{3}} -  \dfrac{(1-\mu) |z-1|^{2}}{2 |z|^{3}}+f\dfrac{|z-1|^{2} |z+1|^{2}}{4 |z|^{4}}.$$
We observe that this system is no longer singular at the transformed collision sets $\{z=\pm 1\}$ in $\{K=0\}$. The Hamiltonian $K$ has a singularity at $z=0$, which however corresponds to energy $K=\infty$ and therefore does not lie on the energy hypersurface $\{K=0\}$. 
The regularized Hill's region, i.e.~the footpoint projection of the energy hypersurface $\{K=0\}$, is the subset in $\C$ described in polar coordinates $z=r e^{i \theta}$ by the inequality 
$$g_\theta(r):=2 r^{3} + 2 r -4 (1-2 \mu) r^{2} \cos \theta  -f (r^{2} -2 r \cos \theta +1)  (r^{2} +2 r \cos \theta +1)\geq0.$$

\begin{prop}\label{prop:Euler-Hill}
 For any $\mu \in (0, 1/2]$ there exists $f_{\mu}>0$ such that for all values $0<f<f_{\mu}$, the regularized Hill's region of the two-center problem at energy $-f$ is an annulus in $\C$ bounded by the boundaries of two star-shaped regions with respect to the origin.
\end{prop}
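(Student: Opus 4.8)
The plan is to describe the regularized Hill's region $\mathfrak{K}=\{re^{i\theta}\in\C : g_\theta(r)\ge 0\}$ slice by slice in the angular variable. Since $g_\theta(0)=-f<0$, the origin is never in $\mathfrak{K}$, so for each $\theta$ only the sign of $g_\theta$ on $r>0$ matters; I would show that for a suitable $f_\mu>0$ and every $0<f<f_\mu$ the slice $S_\theta:=\{r>0:g_\theta(r)\ge 0\}$ is a nondegenerate closed interval $[\rho_-(\theta),\rho_+(\theta)]$ with $0<\rho_-(\theta)<\rho_+(\theta)$, where $\rho_\pm$ are smooth and $2\pi$-periodic. Granting this, $\mathfrak{K}$ is the region between the two polar graphs $r=\rho_\pm(\theta)$; each graph bounds the region $D_\pm:=\{re^{i\theta}:0\le r\le\rho_\pm(\theta)\}$, which is star-shaped with respect to the origin, and since $\rho_-<\rho_+$ everywhere we have $D_-\subset\mathrm{int}\,D_+$ and $\mathfrak{K}=D_+\setminus\mathrm{int}\,D_-$, a closed annulus with $\partial\mathfrak{K}=\partial D_-\sqcup\partial D_+$, exactly as asserted.

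The decisive step is the substitution $s=r+1/r$, which is forced by the $z\leftrightarrow 1/z$ symmetry of the Birkhoff map (equivalently by the identity $g_\theta(r)=r^4g_\theta(1/r)$). Using $(r^2-2r\cos\theta+1)(r^2+2r\cos\theta+1)=(r^2+1)^2-4r^2\cos^2\theta$ and $r^2+1=r\,s$, a direct computation yields
\[
  g_\theta(r)=r^2\,P_\theta(s),\qquad P_\theta(s):=-f s^2+2s+4f\cos^2\theta-4(1-2\mu)\cos\theta,
\]
so $P_\theta$ is a downward parabola in $s$. The map $r\mapsto r+1/r$ sends $(0,\infty)$ onto $[2,\infty)$, folding at $r=1$, and for $s_0>2$ the set $\{r>0:r+1/r\le s_0\}$ is a closed interval $[\rho_-,\rho_+]$ with $\rho_-\rho_+=1$ and $0<\rho_-<1<\rho_+$. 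Hence it suffices to show that for every $\theta$ one has $\{s\ge 2:P_\theta(s)\ge 0\}=[2,s_+(\theta)]$ with $s_+(\theta)>2$ smooth and $2\pi$-periodic, for then $S_\theta=\{r>0:r+1/r\le s_+(\theta)\}=[\rho_-(\theta),\rho_+(\theta)]$ with $\rho_\pm(\theta)=\tfrac12\bigl(s_+(\theta)\pm\sqrt{s_+(\theta)^2-4}\,\bigr)$, smooth because $s_+(\theta)>2$. Writing $C_\theta:=4f\cos^2\theta-4(1-2\mu)\cos\theta$, the roots of $P_\theta$ are $s_\pm(\theta)=\bigl(1\pm\sqrt{1+fC_\theta}\bigr)/f$ and $P_\theta\ge 0$ precisely on $[s_-(\theta),s_+(\theta)]$, so everything reduces to the two inequalities $s_-(\theta)\le 2<s_+(\theta)$, uniformly in $\theta$.

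These are elementary, and this is the point where $\mu\in(0,1/2]$ is used. First, $1+fC_\theta>0$ for all $\theta$: regarded as a quadratic in $\cos\theta$ it equals $4f^2\cos^2\theta-4f(1-2\mu)\cos\theta+1$, whose discriminant is $16f^2\bigl((1-2\mu)^2-1\bigr)=-64f^2\mu(1-\mu)<0$; in particular $s_\pm$ are real and smooth in $\theta$. Second, $s_+(\theta)>1/f\ge 2$ whenever $f\le 1/2$, and strictly $>2$ for $f<1/2$. Third, $s_-(\theta)\le 2$ is equivalent to $\sqrt{1+fC_\theta}\ge 1-2f$, which after squaring (using $f<1/2$) becomes $1-(1-2\mu)\cos\theta\ge f\sin^2\theta$; since $\mu\le 1/2$ forces $1-2\mu\ge 0$ and hence $1-(1-2\mu)\cos\theta\ge 1-(1-2\mu)=2\mu$, while $f\sin^2\theta\le f$, this holds for every $\theta$ as soon as $f\le 2\mu$. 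Thus $f_\mu:=\min\{\tfrac12,\,2\mu\}$ works, and the smoothness and periodicity of $s_+$ and then of $\rho_\pm$ are immediate.

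I expect the only genuine obstacle to be the uniformity in $\theta$: a priori $g_\theta$ is a $\theta$-dependent quartic with negative leading coefficient and $g_\theta(0)<0$, so its positivity set could in principle be empty (no annulus at all) or a union of two intervals (two nested annuli), and this has to be excluded simultaneously for all directions. The substitution $s=r+1/r$ reduces the root count of this quartic to the sign analysis of the single downward parabola $P_\theta$, after which the whole proof collapses to the one-line bound $1-(1-2\mu)\cos\theta\ge 2\mu$ together with the explicit threshold $f_\mu=\min\{1/2,2\mu\}$; identifying this substitution and estimate is really the heart of the matter, and the remaining verifications are routine.
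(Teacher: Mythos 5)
Your proof is correct, and it takes a genuinely different route from the paper's. The paper attacks the quartic $g_\theta(r)$ head-on: it computes the discriminant $\Delta_\theta$ by Maple, factors it as $4096f_1^2f_2f_3$, observes that it is negative for fixed $\mu\in(0,1/2]$ and $f$ small (hence exactly two distinct real roots), and then locates both roots on the positive axis from the signs $g_\theta(0)<0$, $g_\theta(r)\to-\infty$, $g_\theta(1)>0$. This yields only a non-explicit threshold $f_\mu$ and relies on computer algebra. You instead exploit the reciprocal symmetry $g_\theta(r)=r^4g_\theta(1/r)$ (which is exactly the $z\mapsto 1/z$ invariance of the Birkhoff regularization) to substitute $s=r+1/r$ and collapse the quartic to the downward parabola $P_\theta(s)=-fs^2+2s+4f\cos^2\theta-4(1-2\mu)\cos\theta$ via $g_\theta(r)=r^2P_\theta(s)$; the verification of $s_-(\theta)\le 2<s_+(\theta)$ then reduces to the one-line estimate $1-(1-2\mu)\cos\theta\ge 2\mu\ge f\ge f\sin^2\theta$, and all your intermediate computations (the identity $(r^2+1)^2-4r^2\cos^2\theta$, the discriminant $-64f^2\mu(1-\mu)$ of $1+fC_\theta$ as a quadratic in $\cos\theta$, the equivalence of $s_-\le 2$ with $1-(1-2\mu)\cos\theta\ge f\sin^2\theta$) check out. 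What your approach buys is an explicit admissible threshold $f_\mu=\min\{1/2,2\mu\}$, explicit formulas for the two boundary radii with $\rho_-(\theta)\rho_+(\theta)=1$ (making the $\Phi$-symmetry of the annulus visible), and complete independence from machine computation; what the paper's approach buys is brevity and a template that would apply even if the polynomial were not palindromic. Both proofs establish the same slice-by-slice interval structure that gives the star-shaped boundaries.
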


\begin{proof}
It suffices to show that the quartic equation $g_\theta(r)=0$
has exactly two positive real roots for any $\theta$. Let $\Delta_\theta$ be the discriminant of the quartic polynomial $g_\theta(r)$; an explicit formula of the discriminant in terms of the coefficients can be found at \url{https://en.wikipedia.org/wiki/Discriminant#Degree_4}. A calculation by Maple yields the factorization
$$\Delta_\theta=4096 f_{1}^{2} f_{2} f_{3}$$
where
$$f_{1}=1/4+f^2 \cos^{2} \theta+f (-1+2 \mu) \cos \theta,$$
$$f_{2}=f \cos^{2} \theta+(-1+2 \mu) \cos \theta-f-1,$$
$$f_{3}=f \cos^{2} \theta+(-1+2 \mu) \cos \theta-f+1.$$
We see that the discriminant is negative once $\mu\in(0,1/2]$ is fixed and $f$ is chosen small enough. This implies that there exist exactly two real roots for $g_\theta(r)$ and these real roots are distinct.  

To see that both of these real roots are positive, note that $\lim_{r \to +\infty} g_\theta(r)<0$ and $g_\theta(0)<0$. {On the other hand, a short calculation yields $g_\theta(1)>0$ for $f$ sufficiently small. }
Alternatively, we can use connectedness and non-contractibility of the regularized Hill's region asserted in Proposition~\ref{prop:4.2} below to conclude that there must exist some $r>0$ for which $g_\theta(r)>0$. Either way, we conclude that for any $\theta$ the polynomial $g_\theta(r)$ has exactly two positive roots. 
\end{proof}


\subsection{Birkhoff regularization of two-center Stark-Zeeman systems}

Consider now a general two-center Stark-Zeeman system as in Section~\ref{sec:2C} such that the {\em metric $g$ used in the kinetic energy is conformal to the standard metric}. Then replacing $p$ by $2\bar z^2w/(\bar z^2-1)$ yields $\|p\|_{g^*_q} = 2|z|^2\|w\|_{g^*_q}/|z^2-1|$ and the computation of the previous section goes through. Thus for a regular value $c$ satisfying conditions $C(i)$ and $C(ii)$ the level set $\Sigma_c\subset H^{-1}(c)$ pulls back under $T^*B$ to $\Sigma_c^B\subset K^{-1}(0)$ for the rescaled pullback Hamiltonian
$$
   K(z,w) = \dfrac{\|w\|_{g^*_q}^{2}}{2} - \dfrac{ \mu_M |z+1|^{2}}{2 |z|^{3}} -  \dfrac{\mu_E |z-1|^{2}}{2 |z|^{3}}+\dfrac{(V_1(q)-c)|z-1|^{2} |z+1|^{2}}{4 |z|^{4}},
$$ 
where $q$ needs to be replaced by $(z^2+1)/2z$. The singular point $z=0$ corresponds to $q=\infty$ which lies outside the closure $\bar{\mathfrak{K}}_c$ of the bounded Hill's region. So the hypersurface $\Sigma_c^B$ is regular and compact, and we call it the {\em Birkhoff regularization of $\Sigma_c$}. Note that the standard symplectic form twisted by a magnetic field $\sigma$ pulls back under $T^*B$ to the standard symplectic form twisted by the pullback magnetic field $B^*\sigma$. 

The footpoint projection of the Birkhoff regularized energy hypersurface $\Sigma_c^B$ is the preimage $B^{-1}(\bar{\mathfrak{K}}_c)$ under the map $B$ from~\eqref{eq:B}. Recall that we have normalized the positions of the Earth and Moon to $E=-1$ and $M=+1$; we denote the winding numbers around these points by $w_E$ and $w_M$, respectively. Then Proposition~\ref{prop:Euler-Hill} generalizes to

\begin{prop}\label{prop:4.2}
(a) The regularized Hill's region $B^{-1}(\bar{\mathfrak{K}}_c)\subset\C^*$ is an embedded annulus enclosing the origin.\\
(b) The preimage $B^{-1}(K)\subset\C^*$ of a closed curve $K\subset\C \setminus \{E, M\}$ is connected if $w_{E} (K) + w_{M} (K)$ is odd, and has two connected components if $w_{E} (K) + w_{M} (K)$ is even. 
\end{prop}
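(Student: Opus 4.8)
The plan is to analyze the branched double cover $B\colon\C^*\to\C$ and its restriction to preimages. Recall that $B$ is the conjugate of the squaring map $L(z)=z^2$ by the M\"obius transformation $T$, and $T$ sends $E=-1$ to $0$ and $M=+1$ to $\infty$. So topologically $B$ is a double cover of $\C\cup\{\infty\}$ branched over $\{E,M\}=\{\pm1\}$, with the two preimages of $E$ being $0$ and $\infty$ and with $M$ having a single preimage (it is a branch point). For part (a), I would start from Proposition~\ref{prop:Euler-Hill} (and its generalization noted just before the statement): the regularized Hill's region is $B^{-1}(\bar{\mathfrak K}_c)$, and $\bar{\mathfrak K}_c\cup\{E,M\}$ is a closed disc containing $E,M$ in its interior (this is exactly assumption $\mathbf{C(ii)}$, that $\mathfrak K_c\cup\{E,M\}$ is bounded and simply connected; one should also invoke $\mathbf{C(i)}$ so the boundary is a smooth embedded circle, or at least argue by the explicit inequality $g_\theta(r)\ge0$ in the model case). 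The preimage of a closed disc $D\subset\C\cup\{\infty\}$ with $E,M$ in its interior under a double cover branched over $E$ and $M$ is a closed annulus: removing $D$ and taking the preimage gives a connected double cover of a once-punctured disc $S^2\setminus D$ punctured additionally at nothing — more carefully, the cover restricted to $\partial D\simeq S^1$ is determined by whether $w_E(\partial D)+w_M(\partial D)$ is odd, and since $\partial D$ encircles both branch points this sum is $1+1=0$ modulo $2$? No — I must be careful: here the base is $\C\cup\{\infty\}$, not $\C\setminus\{E,M\}$. Let me restructure: $B^{-1}$ of the closed disc $\bar{\mathfrak K}_c\cup\{E,M\}$ (viewed in $\C$) is a compact surface with boundary, the boundary being $B^{-1}(\partial\bar{\mathfrak K}_c)$; the interior branch points are the two points $\pm1$ with their single preimages; by Riemann–Hurwitz $\chi=2\chi(D)-2\cdot\frac12\cdot(\text{ramification contributions})$, giving $\chi=2\cdot1-2=0$, so it is an annulus provided it is connected and orientable. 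Orientability is automatic since $B$ is holomorphic; connectedness I argue below; that the single boundary circle becomes a disjoint union of circles is clear. The annulus encloses the origin because $z=0$ lies in the interior (its image $B(0)=\infty$ is not in $\bar{\mathfrak K}_c$, wait — we need $0\notin B^{-1}(\bar{\mathfrak K}_c)$): indeed $B(0)=\infty\notin\bar{\mathfrak K}_c$, so $0$ is \emph{not} in the regularized Hill's region, and since it is enclosed by the annulus this is what ``enclosing the origin'' means. I need to confirm the annulus separates $0$ from $\infty$, which follows because $B^{-1}$ of a neighborhood of $\infty$ (disjoint from $\bar{\mathfrak K}_c$) is a neighborhood of $\{0,\infty\}$ in $\C\cup\{\infty\}$, i.e.\ two discs, one around each.

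For part (b), the key computation is the monodromy of $B$ restricted over $\C\setminus\{E,M\}$. I would use the covering-space correspondence: $B\colon B^{-1}(\C\setminus\{E,M\})\to\C\setminus\{E,M\}$ is a connected double cover? No — whether it is connected is precisely determined by the monodromy homomorphism $\rho\colon\pi_1(\C\setminus\{E,M\})\to\Z/2$. Since $B$ is branched over exactly $E$ and $M$, a small loop around $E$ and a small loop around $M$ each map to the nontrivial element $1\in\Z/2$ (they are branch points of order $2$). Hence for any loop $\gamma$, $\rho(\gamma)=w_E(\gamma)+w_M(\gamma)\pmod 2$. Therefore $B^{-1}(K)$ for a closed curve $K$ is connected iff the restricted cover over $K$ is connected iff $\rho([K])\neq0$ iff $w_E(K)+w_M(K)$ is odd; otherwise the cover over $K$ is trivial (two sheets) and $B^{-1}(K)$ has two components. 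I should note that $K\subset\C\setminus\{E,M\}$ guarantees $K$ avoids the branch locus so $B^{-1}(K)$ is genuinely an honest double cover of $K\simeq S^1$, and a double cover of $S^1$ has one component (the connected double cover) or two (the trivial one) according to the monodromy sign. This also recovers part (a) as a special case: take $K=\partial\bar{\mathfrak K}_c$, which encircles both $E$ and $M$, so $w_E+w_M=1+1=2$ is even, hence $B^{-1}(\partial\bar{\mathfrak K}_c)$ has two components — matching the fact that an annulus has two boundary circles.

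Assembling: for (a) I verify $\bar{\mathfrak K}_c\cup\{E,M\}$ is a closed topological disc with $E,M$ interior (from $\mathbf{C(i)},\mathbf{C(ii)}$ and, in the concrete Euler case, from Proposition~\ref{prop:Euler-Hill}), compute $\chi(B^{-1}(\text{disc}))=0$ via Riemann–Hurwitz, check connectedness of this preimage (the disc minus a small disc around each of $E,M$ is an annulus whose preimage is connected since the generating loop has odd $w_E+w_M$ — wait, it has \emph{even} sum; I should instead argue connectedness directly: the two points $0,\infty$ over $E$ lie in different ``ends'' but the single point over $M$ connects the two sheets, or simply cite that the preimage of a disc containing all branch points of a connected branched cover whose total space is connected is connected), and conclude it is a compact connected orientable surface with boundary and $\chi=0$, i.e.\ an annulus; then note $0$ is enclosed since $B(0)=\infty$ lies outside $\bar{\mathfrak K}_c$ while $z=0$ is separated from $z=\infty$ by the annulus. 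For (b) I invoke the monodromy identification $\rho=w_E+w_M\bmod2$ and the classification of double covers of the circle. The main obstacle I anticipate is being careful about the ``point at infinity'': $B$ genuinely maps $0$ and $\infty$ both to $q=\infty$, so one of the two ``centers'' ($E=-1$) has preimages $\{0,\infty\}$ and is \emph{not} a branch point of $B$ restricted to $\C^*$ — rather the branch points of $B$ are $\pm1$, whose $B$-images are $\pm1$. I must double-check that the monodromy around $E=-1\in\C$ (a point where $B$ is a local diffeomorphism onto a neighborhood, being a nonbranch regular value with two preimages) is trivial, not nontrivial — so actually $\rho(\text{loop around }E)=0$?! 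Let me re-examine: the branch points \emph{in the target} are $B(\pm1)=\pm1$, i.e.\ $E=-1$ and $M=+1$ are exactly the two branch \emph{values}. So a small loop around $E=-1$ does have nontrivial monodromy. Good — the resolution is that $z=0,\infty$ map to $q=\infty$, not to $q=-1$; the preimage of $q=-1$ is the single branch point $z=-1$. So the identification $\rho=w_E+w_M\bmod 2$ stands, and this subtlety (which point maps where) is exactly the place where I would slow down and write the argument carefully, perhaps via the explicit formula $B(z)=\tfrac12(z+1/z)$ and $B(z)\mp1=\tfrac{(z\mp1)^2}{2z}$ to exhibit the order-$2$ branching at $z=\pm1$ over $q=\pm1$.
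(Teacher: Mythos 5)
Your proof is correct, and for part (a) it takes a genuinely different route from the paper. Part (b) is essentially the paper's argument: the paper simply observes that a loop $K$ lifts under the branched double cover $B$ to a path that closes up iff $w_E(K)+w_M(K)$ is even, which is exactly your monodromy homomorphism $\rho=w_E+w_M\bmod 2$ together with the classification of double covers of $S^1$. For part (a) the paper argues by explicit geometry: $B$ maps the unit circle onto $[-1,1]$, so the preimage of an embedded circle winding once around both $\pm 1$ is two disjoint embedded circles isotopic to the unit circle, and the preimage of the disc $\bar{\mathfrak{K}}_c$ is therefore an embedded annulus enclosing the origin. You instead compute $\chi\bigl(B^{-1}(D)\bigr)=2\cdot 1-2=0$ by Riemann--Hurwitz, check connectedness and orientability, invoke the classification of compact orientable surfaces with boundary, and handle ``enclosing the origin'' separately by noting that the preimage of a neighbourhood of $\infty$ consists of two discs, one around $0$ and one around $\infty$, so the annulus separates them. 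Your version is less picture-dependent and works for any double cover of a disc branched at two interior points, at the cost of a separate connectedness step; the cleanest way to settle that step is to join any point of $B^{-1}(D)$ to the branch point $z=1$ by lifting a path in $D$ ending at $q=1$ (your general principle about preimages of discs containing all branch values also works, but the abandoned intermediate attempt should be cut --- the disc minus two small subdiscs is a pair of pants, not an annulus). Likewise, your momentary worry about whether $E=-1$ is a branch value is correctly resolved at the end via $B(z)\mp 1=(z\mp 1)^2/2z$ (the pair $\{0,\infty\}$ lies over $q=\infty$, not over $E$); in a final writeup that digression should simply be removed.
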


\begin{proof} 
Recall that map $B:\C^*\to\C$ from~\eqref{eq:B} is a branched double cover with two branch points at $\pm 1$ of values $\pm 1$. So each loop $K\subset\C\setminus\{-1,1\}$ lifts to a path in $\C^*$ which closes up iff $w_E(K)+w_M(K)$ is even. Part (b) immediately follows from this.
For part (a), note that $B$ maps the unit circle onto the interval $[-1,1]$, see Figure \ref{fig:Birkhoff}. 
Hence the preimage of an embedded circle $K\subset\C$ winding once around $-1$ and $+1$ consists of two disjoint embedded circles in $\C^*$ isotopic to the unit circle, and the preimage of any embedded disk $D\subset\C$ containing $-1$ and $+1$ (such as $D=\bar{\mathfrak{K}}_c$) is an embedded annulus in $\C^*$ enclosing the origin. 
\end{proof} 


Erdi~\cite{erdi} explains a way to deduce many other (known) regularizations of two-center Stark-Zeeman systems {(Le Maitre, Thiele-Burrau, Brouke, Wintner,\dots)} by composing the Birkhoff regularization with additional smooth transformations. The Birkhoff regularization is therefore a common basis to all these other regularizations. 

\subsection{Birkhoff versus Moser regularization}\label{Subsection: Top of Birkhoff reg}

We continue to use the notation from the previous subsection. 
Recall that the Birkhoff map $B(z)=(z+1/z)/2$ defines a double cover $B:\C^*\to\C$ branched at $E=-1$ and $M=+1$. It is invariant under the inversion $\phi(z)=1/z$ which interchanges the two sheets of the cover. Hence the cotangent lift $T^*B:T^*\C^*\to T^*\C$ of $B$ is invariant under the cotangent lift of $\phi$,
$$
   \Phi:=T^*\phi:T^*\C^*\to T^*\C^*,\qquad (z,w)\mapsto (z^{-1},-\bar z^2w). 
$$
By its construction as a compactification of $(T^*B)^{-1}(\Sigma_c)$, the Birkhoff regularized hypersurface $\Sigma_c^B$ is invariant under $\Phi$. (In fact, a direct computation shows $K\circ\Phi(z,w)=|z|^4K(z,w)$ for the Hamiltonian $K$ of the previous subsection.) Since the fixed points $(\pm 1,0)$ of $\Phi$ do not belong to $K^{-1}(0)$, the action of $\Phi$ on $\Sigma_c^B$ is free. So we obtain a quotient manifold $\Sigma_c^M$ and a 2-to-1 covering
\begin{equation}\label{eq:B-M}
   P:\Sigma_c^B\to \Sigma_c^M. 
\end{equation}
By construction, $\Sigma_c^M$ is a smooth compactification of the energy hypersurface $\Sigma_c$ and we call it the {\em simultaneous Moser regularization at $E$ and $M$}. Note that near each branch point $E,M$ the Birkhoff map looks like the Levi-Civita map around that point, so the 2-to-1 covering~\eqref{eq:B-M} is consistent with the 2-to-1 covering between the Levi-Civita and Moser regularizations of one-center Stark-Zeeman systems used in~\cite{cieliebak-frauenfelder-koert}. 

The following proposition describes the topology of the covering~\eqref{eq:B-M}. 

\begin{prop}\label{prop:B-M}
(a) There exist diffeomorphisms 
$$
   \Sigma_c^B\cong S^1\times S^2\quad\text{and}\quad \Sigma_c^M\cong\R P^3\#\R P^3
$$
such that the first diffeomorphism conjugates the involution $\Phi:\Sigma_c^B\to\Sigma_c^B$ to the map $S^1\times S^2\to S^1\times S^2$, $(\theta,u)\mapsto(-\theta,-u)$ (writing $S^1=\R/2\pi\Z$). \\
(b) The induced map between fundamental groups is given by
$$
   P_*:\pi_1(\Sigma_c^B)=\Z\to \pi_1(\Sigma_c^M)=\Z_2*\Z_2,\qquad n\mapsto(em)^n,
$$
where $e$ and $m$ are represented by lifts of small loops around $E$ and $M$, respectively. \\
(c) The free homotopy classes of loops in $\Sigma_c^M\cong\R P^3\#\R P^3$ correspond to the conjugacy classes $[e]$, $[m]$, and $[(em)^n]$ for $n\in\N_0$ in $\pi_1(\R P^3\#\R P^3)=\Z_2*\Z_2$. 
\end{prop}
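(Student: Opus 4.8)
The plan is to compute everything from the Birkhoff picture, using the fact established earlier that $\Sigma_c^B = (T^*B)^{-1}(\Sigma_c)$ is a smooth compact hypersurface whose footpoint projection is the regularized Hill's region, an embedded annulus $\mathcal{H}^B\subset\C^*$ enclosing the origin (Proposition~\ref{prop:4.2}(a)). For part (a), I would first identify $\Sigma_c^B$. Over the interior of the annulus $\mathcal{H}^B$ the hypersurface is a (twisted) unit cotangent sphere bundle, i.e.\ an $S^1$-bundle of cospheres, while over the two boundary circles the cosphere collapses. Concretely, the equation $\|w\|^2_{g^*_q} = (\text{positive function of }z)$ on $\{K=0\}$ exhibits $\Sigma_c^B$ as the boundary of a disk bundle over $\mathcal{H}^B\simeq S^1\times[0,1]$, hence as two copies of $S^1\times D^2$ glued along their boundary $S^1\times S^1$ — but the gluing must be arranged so the result is $S^1\times S^2$ rather than, say, a lens space times $S^1$; this follows because the annulus retracts to its core circle and the cosphere bundle over that circle is trivial (the base is an annulus, orientable, and the structure group reduces). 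So $\Sigma_c^B\cong S^1\times S^2$, where the $S^1$ factor is the core of the annulus and the $S^2$ is obtained by fiberwise compactification of the cotangent line over a transverse arc. The involution $\Phi(z,w)=(z^{-1},-\bar z^2 w)$ acts on the annulus $\mathcal{H}^B$ by $z\mapsto 1/z$, which is the orientation-reversing reflection swapping the two boundary circles and acting on the core $S^1=\{|z|=1\}$ by $e^{i\theta}\mapsto e^{-i\theta}$; on the cotangent fibers it acts by $w\mapsto -\bar z^2 w$, which on the compactified $S^2$ is (up to the identification) the antipodal map. Tracking these two actions through the diffeomorphism gives the stated normal form $(\theta,u)\mapsto(-\theta,-u)$.

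For part (b), the covering $P:\Sigma_c^B\to\Sigma_c^M$ is the quotient by the free $\Z_2$-action $\Phi$. From the $S^1\times S^2$ model with $\Phi=(-\mathrm{id})\times(-\mathrm{id})$, the quotient is the total space of an $S^2$-bundle over $S^1$ with monodromy the antipodal map, i.e.\ the nonorientable-type mapping torus; this manifold is well known to be $\R P^3\#\R P^3$ (equivalently the twisted $S^2$-bundle over $S^1$, equivalently the double of a twisted $D^1$-bundle... — the cleanest route is to recognize $\Sigma_c^M$ as the simultaneous Moser regularization and decompose it along the separating sphere coming from $q=\infty$ into two copies of the Moser-regularized one-center piece, each a punctured $\R P^3$). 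For the fundamental group: $\pi_1(S^1\times S^2)=\Z$ generated by the core circle $\gamma$; a lift of a small loop around $E=-1$ (or $M=+1$) is an arc in $\Sigma_c^B$ running from a point to its $\Phi$-image, so its $P$-image is a generator of order two — call these $e,m$. Now $e m$ is represented by concatenating the arc around $E$ with the arc around $M$, which downstairs is (homotopic to) a loop whose $P$-lift closes up and equals the core $\gamma$; this is exactly the statement that $w_E+w_M$ even $\Leftrightarrow$ the lift closes, from Proposition~\ref{prop:4.2}(b). Hence $P_*(n)=(em)^n$, and since $\pi_1(\Sigma_c^M)$ is generated by $e$ and $m$ with $e^2=m^2=1$ (van Kampen on the connected sum, or directly on the $S^2$-bundle over $S^1$), we get $\pi_1(\Sigma_c^M)=\Z_2*\Z_2$ with $P_*$ as claimed.

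For part (c), I would enumerate conjugacy classes in $G=\Z_2*\Z_2=\langle e,m\mid e^2=m^2=1\rangle$. Every element is an alternating word in $e,m$; the central/"rotation" subgroup is $\langle em\rangle\cong\Z$, and a short calculation shows: $(em)^n$ is conjugate to $(me)^n=(em)^{-n}$, so these give one class $[(em)^n]$ for each $n\in\N_0$ (with $n=0$ the identity); every "reflection" (odd-length alternating word) is conjugate either to $e$ or to $m$, and $e$ is not conjugate to $m$. Since $\Sigma_c^M$ is aspherical on $\pi_1$ at the level of free loops in the sense we need — more precisely, free homotopy classes of loops in any space are in natural bijection with conjugacy classes in $\pi_1$ — this list is exactly the set of free homotopy classes, giving $[e]$, $[m]$, and $[(em)^n]$, $n\in\N_0$.

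The main obstacle I expect is part (a): getting the precise diffeomorphism type $S^1\times S^2$ (as opposed to some other $S^2$- or lens-space bundle) and, simultaneously, pinning down that $\Phi$ is conjugate to $(-\mathrm{id},-\mathrm{id})$ and not some other free involution. This requires care with the trivialization of the cosphere bundle over the annulus and with how $\Phi$'s action on the fiber $w\mapsto -\bar z^2 w$ interacts with that trivialization as $z$ runs around the core circle (the factor $\bar z^2$ contributes a framing twist that must be checked to be cancelled, or absorbed, correctly). Everything in (b) and (c) is then formal once (a) is in hand, with the only input from the analysis being the parity statement of Proposition~\ref{prop:4.2}(b) identifying $P_*(1)$ with $em$.
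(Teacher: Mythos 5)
Your overall strategy for (a)--(c) is essentially the paper's: identify $\Sigma_c^B$ as the union over the angular coordinate of the annulus of the $2$-spheres obtained by compactifying the cosphere bundle over radial arcs, read off the involution in these coordinates, obtain $\R P^3\#\R P^3$ from the boundary connected sum decomposition of the Hill's region into one-center pieces, identify $P_*(1)=em$ from the lifting behaviour of loops (Proposition~\ref{prop:4.2}(b)), and enumerate conjugacy classes in $\Z_2*\Z_2$. Parts (b) and (c) are fine. But there are two problems in (a). First, the assertion that the quotient of $S^1\times S^2$ by $(\theta,u)\mapsto(-\theta,-u)$ is ``the $S^2$-bundle over $S^1$ with monodromy the antipodal map, i.e.\ the nonorientable-type mapping torus'' is wrong: the involution acts on the $S^1$ factor by a \emph{reflection}, which is not free, so the quotient does not fiber over a circle at all; it fibers over the interval $[0,\pi]$ with $\R P^2$ fibers over the endpoints, i.e.\ it is a union of two twisted $I$-bundles over $\R P^2$ glued along an $S^2$, which is $\R P^3\#\R P^3$. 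The mapping torus of the antipodal map is the quotient by the \emph{different} free involution $(\theta,u)\mapsto(\theta+\pi,-u)$; it is nonorientable with fundamental group $\Z$, not $\Z_2*\Z_2$. Your parenthetical fallback (connected sum of two punctured one-center Moser regularizations) is the correct route and is the one the paper takes, except that the separating sphere sits over an arc in the Hill's region separating $E$ from $M$, not over $q=\infty$.

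Second, and more seriously, you explicitly leave open the one step in (a) that actually requires a computation: showing that $\Phi$ is conjugate to $(\theta,u)\mapsto(-\theta,-u)$ rather than to some other free involution, i.e.\ disposing of the twist coming from the factor $-\bar z^2$ in the fiber direction. You correctly identify this as ``the main obstacle'' but do not resolve it, so as written the proof of (a) is incomplete. The resolution is short: in coordinates $z=e^{\rho+i\theta}$ and $u=(\rho,w)$ the involution reads $\bigl(\theta,(\rho,w)\bigr)\mapsto\bigl(-\theta,(-\rho,-e^{-2i\theta}w)\bigr)$, and conjugating by the fiberwise rotation $\Gamma\bigl(\theta,(\rho,w)\bigr)=\bigl(\theta,(\rho,e^{-i\theta}w)\bigr)$ absorbs the $e^{-2i\theta}$ and produces exactly $(-\theta,(-\rho,-w))$. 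Since the normal form of the involution is what feeds the identification of the quotient (and hence parts (b) and (c) through the covering $P$), this computation needs to be carried out, not merely flagged.
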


\begin{proof}
(a) Recall that the closure of the Hill's region $\mathfrak{K}_c$ is a closed disk $D$ containing $E=-1$ and $M=1$, and its preimage $A:=B^{-1}(D)$ is a closed annulus enclosing the origin, see Figure \ref{fig:Birkhoff}. After deforming the Stark-Zeeman system (which does not affect the assertions of the proposition) we may assume that 
$$
   A = \{z\in\C\mid e^{-1}\leq|z|\leq e\} = \{z=e^{\rho+i\theta}\in\C\mid -1\leq\rho\leq 1\}.
$$
We use $(\rho,\theta)\in[-1,1]\times\R/2\pi\Z$ as coordinates on $A$, in which the inversion $\phi(z)=z^{-1}$ sends $(\rho, \theta)$ to $(-\rho, -\theta)$. The footpoint projection $\pi:\Sigma_c^B\to A$ defines a circle bundle over the interior of $A$ whose fibre circles collapse to points over the boundary $\partial A$ (the zero velocity curves). 
Thus for each fixed angle $\theta$ the preimage $\pi^{-1}([-1,1]\times\{\theta\})$ is a $2$-sphere, which gives the first diffeomorphism $\Sigma_c^B\cong S^1\times S^2$. 
Note that coordinates on $S^1\times S^2$ are given by $(\theta,u)$, where $\theta\in\R/2\pi\Z$ and $u=(\rho,w)\in[-1,1]\times\C$ with $\rho^2+|w|^2=1$.
Hence in these coordinates the map $\Phi(z,w)=(z^{-1},-\bar z^2w)$ takes (after rescaling $w$) the form 
$$
   \Phi:S^1\times S^2\to S^1\times S^2,\qquad \bigl(\theta,(\rho,w)\bigr)\mapsto\bigl(-\theta,(-\rho,-e^{-2i\theta}w)\bigr). 
$$
Conjugating $\Phi$ by the diffeomorphism 
$$
   \Gamma:S^1\times S^2\to S^1\times S^2,\qquad \bigl(\theta,(\rho,w)\bigr)\mapsto\bigl(\theta,(\rho,e^{-i\theta}w)\bigr)
$$
yields the desired map
$$
   \Gamma\Phi\Gamma^{-1}\bigl(\theta,(\rho,w)\bigr) 
   = \Gamma\Phi\bigl(\theta,(\rho,e^{i\theta}w)\bigr) 
   = \Gamma\bigl(-\theta,(-\rho,-e^{-i\theta}w)\bigr) 
   = \bigl(-\theta,(-\rho,-w)\bigr). 
$$
For the second diffeomorphism, we view $D$ as the boundary connected sum of two disks around $E$ and $M$. Then $\Sigma_c^M$ is the connected sum $\Sigma_E^M\#\Sigma_M^M$ of two Moser regularized energy hypersurfaces in one-center Stark-Zeeman systems, each of is diffeomorphic to $\R P^3$ as shown e.g.~in~\cite{cieliebak-frauenfelder-koert}. 
Alternatively, consider small closed disks $D_E,D_M\subset{\rm Int}\,D$ around $E,M$. Then $\pi^{-1}(D_E),\pi^{-1}(D_M)\subset\Sigma_c^M$ are solid tori and $\Sigma_c^M\setminus(\pi^{-1}(D_E)\amalg\pi^{-1}(D_M)$ is diffeomorphic to $S^3\setminus(T_E\amalg T_M)$ for unlinked and unknotted solid tori $T_E,T_M\subset S^3$. The local description of the Moser regularization near $E$ shows that to recover $\Sigma_c^M$, both $T_E$ and $T_M$ are glued in along their boundary by a diffeomorphism mapping the meridian to twice the meridian plus the longitude. Thus $\Sigma_c^M$ is the $2/1$-Dehn surgery of $S^3$ along two unlinked unknots (see e.g.~\cite{Gordon}), which equals $\R P^3\#\R P^3$. 

(b) By the description of the diffeomorphism $\Sigma_c^B\cong S^1\times S^2$ in (a), the outer boundary of $A$ represents a generator of $S^1$. Since it is mapped under $B$ onto $\partial D$, and $B$ lifts to $P$, this shows that $P_*$ maps a generator of $\pi_1(\Sigma_c^B)$ onto $em$. 

(c) Note that each element in $\Z_2*\Z_2$ is of the form $a_n=(em)^n$, $b_n=m(em)^n$ or $c_n=(em^n)e$ for some $n\in\N_0$. Since $mb_nm^{-1}=c_{n-1}$ and $ec_ne^{-1}=b_{n-1}$, all the elements $b_n,c_n$ are conjugated to either $e$ or $m$. 
\end{proof}

\begin{rem}
Proposition~\ref{prop:B-M} implies that the quotient of $S^1\times S^2$ under the fixed point free involution $\Phi(\theta,u)=(-\theta,-u)$ is diffeomorphic to $\R P^3\#\R P^3$. The geometry of the Birkhoff map leads to the following direct description of this diffeomorphism. Write
$$
   S^1=\R/2\pi\Z = I_0\cup I_2\cup I_3\cup I_4
$$
as the union of the four intervals
$$
   I_0= [-\frac{\pi}{4},\frac{\pi}{4}],\quad I_1=[\frac{\pi}{4},\frac{3\pi}{4}],\quad I_2= [\frac{3\pi}{4},\frac{5\pi}{4}],\quad I_3= [\frac{5\pi}{4},\frac{7\pi}{4}]
$$
glued at their endpoints. See Figure~\ref{fig:circle}.
\begin{figure}
\center
\includegraphics[width=100mm]{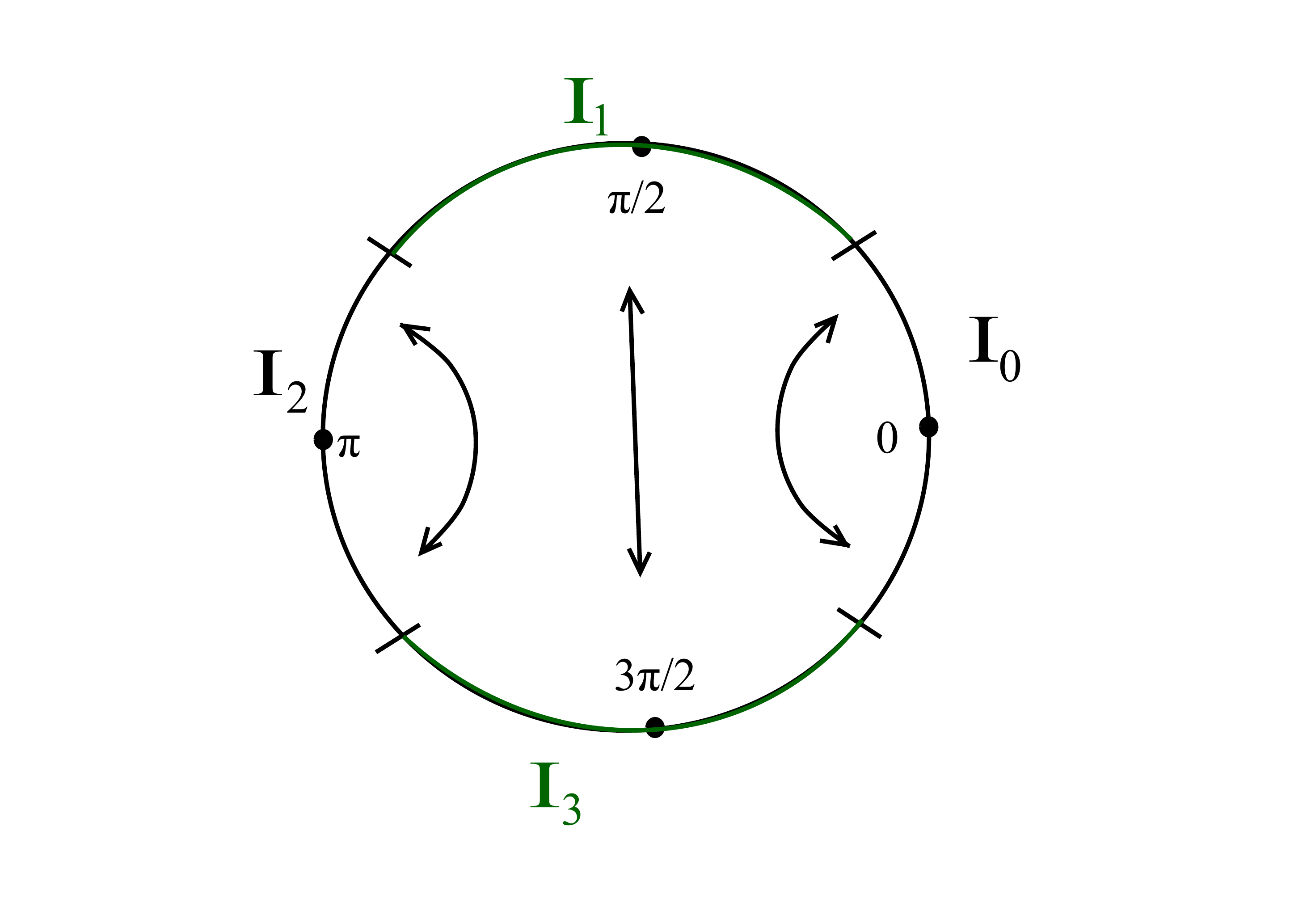}
\caption{The circle and the intervals}
\label{fig:circle}
\end{figure}
Note that the map $\theta\mapsto -\theta$ preserves $I_0$, $I_2$ and interchanges $I_1$ with $I_3$. Now we perform two $2$-surgeries on $S^1\times S^2$ along the spheres $\pi/2\times S^2$ and $3\pi/2\times S^2$, whose result can be explicitly written as (with the obvious gluings along the boundaries)
\begin{align*}
   N &:= \Bigl(S^1\times S^2\setminus(\mathring{I_1}\cup\mathring{I_3})\times S^2\Bigr)\cup(\p I_1\cup\p I_3)\times B^3 \cr
   &= (I_0\times S^2\cup\p I_0\times B^3)\amalg (I_2\times S^2\cup\p I_2\times B^3).
\end{align*}
Here $(I_0\times S^2\cup\p I_0\times B^3)\cong S^3$ and the involution $\Phi$ extends over $\p I_0\times B^3$ via $\Phi(\pm\pi/4,u)=(\mp\pi/4,-u)$. This gives the antipodal map on $S^3$, so its quotient is $\R P^3$ and the two balls $\p I_0\times B^3$ become one ball $\pi/4\times B^3$ in $\R P^3$. A similar discussion applies to the second component and we get
$$
   N/\Phi \cong \R P^3\amalg\R P^3
$$
with two distinguished balls $\pi/4\times B^3$ and $3\pi/4\times B^3$ in the two components. Now performing two $0$-surgeries on $N$ recovers
$$
   S^1\times S^2 = \Bigl(N\setminus(\p I_1\cup\p I_3)\times B^3\Bigr)\cup(I_1\cup I_3)\times S^2.
$$
Taking the quotient by $\Phi$, this yields
\begin{align*}
   S^1\times S^2/\Phi &= \Bigl(N/\Phi\setminus\p I_1\times B^3\Bigr) \cup I_1\times S^2 \cr
   &= \Bigl(\bigl(\R P^3\setminus\frac{\pi}{4}\times B^3\bigr)\amalg \bigl(\R P^3\setminus\frac{3\pi}{4}\times B^3\bigr)\Bigr) \cup I_1\times S^2 \cr
   &= \R P^3\#\R P^3.
\end{align*}
\end{rem}


\begin{rem}
The free product $\Z_2*\Z_2$ is isomorphic to the semidirect product $\Z_2\rtimes\Z$,
where $1\in\Z_2=\Z/2\Z$ acts on $\Z$ by $n\mapsto-n$. Indeed, we have the explicit isomorphism
$$\Z_2\rtimes\Z\stackrel{\cong}\longrightarrow \Z_2*\Z_2, \qquad (j,n) \mapsto (em)^n e^j.$$
\end{rem}

By Proposition~\ref{prop:B-M}(c), the free homotopy classes of loops in $\R P^3\#\R P^3$ (or equivalently, the connected components of its free loop space) are given by $[e]$, $[m]$, and $[(em)^n]$ for $n\in\N_0$. By Proposition~\ref{prop:B-M}(b), a loop in the class $[(em)^n]$ lifts under the covering map $P:S^1\times S^2 \to \mathbb{R}P^3\#\mathbb{R}P^3$ to two loops in $S^1\times S^2$, one representing
the conjugacy class $[n]$ and the other the class $[-n]$ in the fundamental group $\pi_1(S^1\times S^2)=\Z$. A loop in the class $[e]$ or $[m]$ does not lift to a loop in $S^1\times S^2$, but its double cover lifts to a contractible loop which is invariant under the involution $\Phi$. 

\subsection{A uniform view of partial and simultaneous regularizations}\label{Subsection: uniform view}

We have explained regularizations of either double collisions with one of the primaries or simultaneously for both. As Waldvogel's interpretation of the Birkhoff regularization suggests, we should consider these partial or simultaneously regularizations on the Riemann sphere which leads to a uniform view of them. We see that all of these regularization mappings are 2-to-1 complex {covering maps} branched at exactly two of the three points: $E, M, \infty$: The pair $(E, \infty)$ resp. $(M, \infty)$ gives rise to partial regularizations, while the pair $E, M$ gives rise to simultaneous regularizations. 

\section{$J^{+}$-invariants and Stark-Zeeman homotopies}

\subsection{Arnold's $J^{+}$-invariant for immersed loops in the plane}

In \cite{arnold}, Arnold defined three invariants $J^{+}, J^{-}, St$ for generic immersed loops in a plane. 
Here genericity means that there are only transverse double self-intersections.  Along a generic family of immersed loops three types of ``disasters'' may happen, direct and inverse self-tangencies and triple self-intersections, which give rise respectively to three quantities $J^{+}, J^{-}, St$. Of these quantities, $J^{+}$ is invariant under inverse self-tangiencies and triple self-intersections, while it increases by $2$ during a {\em positive} passage (i.e., such that two new double points are created) through a direct self-tangency. It is defined uniquely by these requirements and the normalizations on the {\em standard curves $K_j$} shown in Figure~\ref{fig:standard-curves}: it is normalized to $0$ on a figure-eight curve $K_0$, and to $2 -2|j|$ on the circle $K_{j}$ with $|j|-1$ interior loops and rotation number $j\in\Z$. 

%

\begin{figure}
\center
\includegraphics[width=100mm]{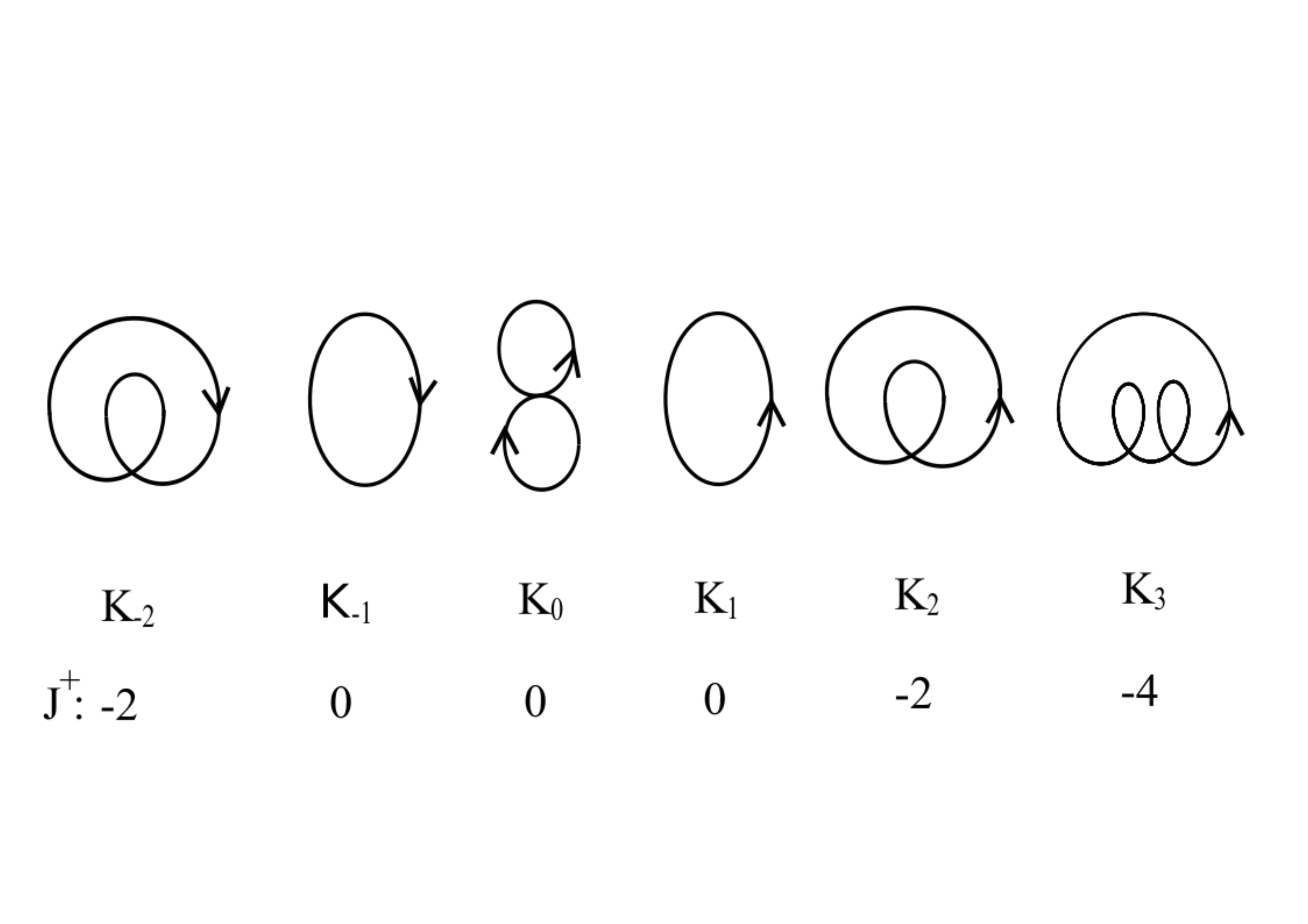}
\caption{The standard curves and their $J^+$-invariants}
\label{fig:standard-curves}
\end{figure}

Once we fix the energy in a Stark-Zeeman system, a direct self-tangency implies equality of the initial conditions and thus cannot happen for simple periodic orbits. The invariant $J^{+}$ is therefore relevant for periodic orbits of Stark-Zeeman systems. Assertion (a) of the following proposition is proved in~\cite{arnold} and assertions (b), (c) in~\cite{cieliebak-frauenfelder-koert}, where $w_{0}(K)$ denotes the winding number of a loop $K\subset\C\setminus\{0\}$ around the origin.

\begin{prop}\label{prop:J+}
(a) The invariant $J^{+}$ is independent of the orientation of the generic immersed loop $K\subset\C$, and additive under connected sum. \\
(b) Under addition of a loop in a component $C$ of $\C\setminus K$ to an arc $A\subset K$ the invariant changes by $-2w(K,C)$, where $w(K,C)$ is the winding number of $K$ around $C$ and $K$ is oriented by orienting $A$ as a boundary arc of $C$. \\
(c) For any pair of numbers $(n_1,n_2) \in 2\Z\times \Z$ there exists a generic immersed loop $K\subset\C\setminus\{0\}$ with $J^{+}(K)=n_1$ and $w(K)=n_{2}$. \hfill$\square$
\end{prop}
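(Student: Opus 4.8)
Parts (a) and (b) are classical: (a) is Arnold's original result \cite{arnold}, and (b) is the behaviour of $J^{+}$ under the loop-addition move worked out in \cite{arnold} and in \cite{cieliebak-frauenfelder-koert}, so I would quote them and concentrate on deriving (c) from (a) and (b). The idea is to produce, for each target pair $(n_1,n_2)$, a single generic loop built in three stages: first a standard model realizing the winding number $n_2$ around the origin; then a connected sum, performed far from the origin, that manufactures complementary regions of winding numbers $+1$ and $-1$ which are disjoint from the origin; and finally a sequence of local loop-additions inside those two regions that walks $J^{+}$ in steps of $2$ to the prescribed even value $n_1$.

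Concretely, I would set up the base curve $K_{*}$ as follows. If $n_2\neq 0$, place the standard curve $K_{n_2}$ so that the origin lies in its complementary region of winding number $n_2$, and form the connected sum with a figure-eight $K_0$ located in the unbounded region, far from the origin; by (a) this gives $J^{+}(K_{*})=(2-2|n_2|)+0$ together with winding number $n_2$ about the origin, and the figure-eight summand contributes bounded complementary regions of winding numbers $+1$ and $-1$, both avoiding the origin. If $n_2=0$, simply take $K_{*}=K_0$ with the origin in the unbounded region, so $J^{+}(K_{*})=0$, $w(K_{*})=0$, and again there are bounded regions of winding number $\pm 1$ disjoint from the origin. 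In every case $J^{+}(K_{*})$ is even, hence has the same parity as $n_1$.

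It then remains to adjust $J^{+}$. Using (b), adding a small embedded loop inside the region of winding number $+1$, in a disk disjoint from the origin, changes $J^{+}$ by $-2$, while adding one inside the region of winding number $-1$ changes it by $+2$; neither move alters the winding number about the origin, and the loop stays inside $\C\setminus\{0\}$. After $|n_1-J^{+}(K_{*})|/2$ such moves we obtain a generic immersed loop in $\C\setminus\{0\}$ with $J^{+}=n_1$ and $w=n_2$, as required. The delicate points — the part that actually needs care — are bookkeeping rather than conceptual: one must check that the base curve genuinely has complementary regions of both signs $\pm 1$ situated away from the origin, which is precisely why the figure-eight summand is inserted (connected sums of standard curves alone never give $J^{+}>0$, since $J^{+}(K_j)=2-2|j|\le 0$ and $J^{+}$ is additive, so (b) is essential for reaching positive values); and one must fix the orientation conventions in (b) so that the two available moves change $J^{+}$ by exactly $+2$ and $-2$ respectively, while the winding number around the origin is manifestly preserved because every modification is confined to a disk not containing $0$.
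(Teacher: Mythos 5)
The paper does not actually prove this proposition (it is quoted with references: \cite{arnold} for (a), \cite{cieliebak-frauenfelder-koert} for (b) and (c)), so your attempt to derive (c) from (a) and (b) is an independent route; unfortunately it has two concrete gaps, both located exactly in the ``bookkeeping'' you flag as the delicate part.

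First, the base curve does not realize the winding number. The standard curve $K_{n_2}$ is a circle with $|n_2|-1$ small \emph{disjoint} interior loops, so its complementary components have winding numbers only $0$, $\pm 1$ and $\pm 2$; for $|n_2|\geq 3$ there is no component of winding number $n_2$, and the instruction ``place the origin in the complementary region of winding number $n_2$'' cannot be carried out. To get $w(K)=n_2$ you need a different model, e.g.\ an $|n_2|$-fold covered circle perturbed to a generic immersion, whose $J^+$ then has to be computed separately (it is not $2-2|n_2|$).

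Second, and more seriously, the sign of your loop-addition move is wrong. In part (b) the winding number $w(K,C)$ is computed with $K$ oriented so that the arc $A$ is a positively oriented boundary arc of $C$ (i.e.\ $C$ lies on the left of $A$), not with a fixed global orientation of $K$. This convention is forced by consistency with (a): the geometric move is orientation-independent and so is $J^+$, whereas a fixed-orientation winding number changes sign under orientation reversal. For a figure-eight each lobe is bounded by a single arc which, once oriented as the boundary of that lobe, winds positively around it, so $w(K,C)=+1$ for \emph{both} lobes and each of your two available moves changes $J^+$ by $-2$. There is no ``$+2$'' move in your configuration, and since $J^+(K_*)\leq 2$ the construction can never reach large positive values of $n_1$ --- which, as you yourself note, is the whole point of inserting the figure-eight. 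To increase $J^+$ one needs either a component whose boundary-arc-oriented winding number is genuinely negative (for instance, for $K_2$ the large component adjacent to an arc of the \emph{interior} loop has $w(K,C)=-1$, so a loop added there raises $J^+$ by $2$), or, as in the paper's Lemma~\ref{lem:basic}, curves obtained from an embedded circle by $k$ positive crossings through direct self-tangencies, which have $J^+=2k$ and can then be connect-summed in away from the origin.
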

%

If we are given two distinct points $E, M \in \C$ and denote by $w_{E}(K), w_{M}(K)$ the corresponding winding numbers, then by taking the connected sum of two curves which wind around $E$ or $M$ with given total $J^{+}$ we obtain

\begin{cor} 
For any triple of numbers $(n_{1}, n_{2}, n_{3}) \in 2 \Z\times \Z\times\Z$ there exists a generic immersed loop $K\subset\C\setminus\{E,M\}$ with $J^{+}(K)=n_{1}$, $w_{E}(K)=n_{2}$ and $w_{M}(K)=n_{3}$. \hfill$\square$
\end{cor}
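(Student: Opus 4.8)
The plan is to build the desired loop as a connected sum of two loops, one winding only around $E$ and one winding only around $M$, and then use the additivity of $J^+$ under connected sum (Proposition~\ref{prop:J+}(a)) together with the existence result of Proposition~\ref{prop:J+}(c). Concretely, I would first apply Proposition~\ref{prop:J+}(c) twice. Since $(n_2,n_3)\in\Z\times\Z$ and we are allowed to split the even number $n_1$ as a sum of two even numbers in many ways, choose $a_1,b_1\in 2\Z$ with $a_1+b_1=n_1$ (for instance $a_1=n_1$, $b_1=0$). By Proposition~\ref{prop:J+}(c) applied with center $E$, there is a generic immersed loop $K_E\subset\C\setminus\{E\}$ with $J^+(K_E)=a_1$ and winding number $n_2$ around $E$; by a translation we may assume $K_E$ lies in a small disk $D_E$ around $E$ and in particular $M\notin D_E$, so $w_M(K_E)=0$. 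Similarly there is a generic immersed loop $K_M$ contained in a small disk $D_M$ around $M$, disjoint from $D_E$, with $J^+(K_M)=b_1$, $w_M(K_M)=n_3$, and $w_E(K_M)=0$.

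Next I would form the connected sum $K:=K_E\# K_M$ along an embedded band joining $D_E$ to $D_M$ in the complement of the two loops; since $D_E$ and $D_M$ are disjoint, this band can be chosen disjoint from both $E$ and $M$, so the resulting loop $K$ lies in $\C\setminus\{E,M\}$. Winding numbers are additive under this operation (the band contributes nothing homologically), giving $w_E(K)=w_E(K_E)+w_E(K_M)=n_2$ and $w_M(K)=n_3$. By Proposition~\ref{prop:J+}(a), $J^+$ is additive under connected sum, hence $J^+(K)=J^+(K_E)+J^+(K_M)=a_1+b_1=n_1$. This produces the required loop and completes the argument.

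The only point requiring a little care—and the step I would call the main (though minor) obstacle—is making sure the connected sum is performed legitimately: one must orient the two loops compatibly, pick the connecting band on the outside of both small disks, and verify that the band does not introduce spurious self-tangencies or change the winding numbers around $E$ and $M$. All of this is standard once the two constituent loops are confined to disjoint disks, which is why isolating each loop into a small disk around its center (via translation) at the outset is the key simplification. Nothing deeper is needed; the corollary is essentially a bookkeeping consequence of parts (a) and (c) of Proposition~\ref{prop:J+}, which is presumably why the excerpt states it with a one-line indication and a $\square$.
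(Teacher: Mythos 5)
Your argument is correct and coincides with the paper's own: the corollary is obtained precisely by applying Proposition~\ref{prop:J+}(c) to produce one loop around $E$ and one around $M$ with prescribed winding numbers and with $J^+$-values summing to $n_1$, and then taking their connected sum using additivity from Proposition~\ref{prop:J+}(a). Your additional remarks about confining each constituent loop to a small disk and choosing the connecting band appropriately are exactly the (routine) care the paper leaves implicit.
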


\subsection{Spherical $J^{+}$ for immersed loops on the sphere}

In \cite{arnold sphere}, Arnold defined a spherical analogue of the $J^{+}$-invariant for generic immersed loops on the sphere as follows. For a generic oriented immersed loop $K$ in the plane let $r(K)$ denote its {\em rotation number}, i.e., the degree of its normalized velocity vector $S^1\to S^1$, and define the 
{\em spherical $J^+$-invariant}
$$
   SJ^{+}(K) := J^{+}(K) + r(K)^{2}/2.
$$

\begin{prop}[Arnold~\cite{arnold sphere}]\label{prop: spherical j+}
$SJ^+$ induces a $J^+$-type invariant for generic immersed loops on the $2$-sphere. Moreover, it is invariant under diffeomorphisms of the sphere (in particular under M\"obius transformations).
\end{prop}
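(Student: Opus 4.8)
The plan is to reduce the whole statement to the invariance of $SJ^+$, read off in an affine chart, under a single planar inversion, and then to verify that invariance by computing how $J^+$ and the rotation number transform. First I would record two elementary observations. (i) $J^+$ and $r^2$ are invariant under arbitrary diffeomorphisms of $\C$: an orientation-preserving one is isotopic to the identity and preserves the number of transverse double points, so it can be realized through a regular homotopy of immersed loops with no disasters, along which $J^+$ and $r$ are constant; an orientation-reversing one factors through a reflection, which sends $K_j\mapsto K_{-j}$ and $K_0\mapsto K_0$ and preserves the sign of a direct self-tangency, hence fixes $J^+$, while it only changes the sign of $r$. (ii) Since the rotation number is unchanged by any of the three disasters, within a fixed chart $SJ^+=J^++r^2/2$ increases by $2$ at a positive direct self-tangency and is invariant under inverse self-tangencies and triple points, so $SJ^+$ does carry the structure of a $J^+$-type invariant on each chart. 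Now a generic immersed loop $K\subset S^2$ determines, for every component $C$ of $S^2\setminus K$, a planar curve $K_C$ by stereographic projection from a point of $C$, and any two such projections are intertwined by a Möbius transformation whose pole (a point of one of the $C$'s) does not lie on the curve. Writing such a Möbius transformation as affine$\,\circ\,(z\mapsto 1/z)\,\circ\,$affine and using (i), the equality $SJ^+(K_C)=SJ^+(K_{C'})$, i.e.\ well-definedness of $SJ^+$ on spherical curves, reduces to
\begin{equation*}
   SJ^+(\iota K)=SJ^+(K),\qquad \iota(z)=1/z,\quad K\subset\C^*:=\C\setminus\{0\}\text{ generic}.\tag{$\ast$}
\end{equation*}
Granting $(\ast)$, invariance of $SJ^+$ under an \emph{arbitrary} diffeomorphism $\phi$ of $S^2$ follows at once: $SJ^+(\phi K)$ computed in any chart equals $SJ^+(K)$ computed in the $\phi$-transported chart up to a diffeomorphism of $\C$, which by (i) changes nothing; invariance under Möbius transformations is the special case of Möbius $\phi$.

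To prove $(\ast)$, I would first differentiate $\iota\circ K(t)=1/K(t)$: the velocity of $\iota K$ is $-K'(t)/K(t)^2$, whose argument turns through $2\pi\bigl(r(K)-2w_0(K)\bigr)$ as $t$ runs over the loop, where $w_0$ is the winding number about the origin. Hence $r(\iota K)=r(K)-2w_0(K)$, so $(\ast)$ is equivalent to
\begin{equation*}
   J^+(\iota K)-J^+(K)=\tfrac12\bigl(r(K)^2-r(\iota K)^2\bigr)=2w_0(K)\bigl(r(K)-w_0(K)\bigr).\tag{$\ast\ast$}
\end{equation*}
Because $\iota$ is a diffeomorphism of $\C^*$, it carries a generic homotopy of loops in $\C^*$ to another one, matching the disasters bijectively and with equal signs; therefore both sides of $(\ast\ast)$ are unchanged along any generic homotopy in $\C^*$, so each side depends only on the regular-homotopy class of $K$ in the annulus $\C^*$, i.e.\ on the pair $(w_0(K),r(K))\in\Z^2$. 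It then suffices to verify $(\ast\ast)$ on one loop in each class. For $w_0=0$ this is immediate: a representative fits in a disk $D\subset\C^*$ with $0\notin D$, and $\iota|_D$ extends to a diffeomorphism of $\C$, so $J^+(\iota K)=J^+(K)$ by observation (i) and $(\ast\ast)$ reads $0=0$. For $w_0=a\neq 0$ the key point is that the explicit coil $\gamma_a(t)=e^{\eps\sin t}e^{iat}$ (for generic small $\eps>0$) has $w_0=r=a$ and satisfies $1/\gamma_a(t)=\gamma_a(-t)$, so as a subset of $\C$ it is \emph{invariant} under $\iota$; hence $J^+(\iota\gamma_a)=J^+(\gamma_a)$ (using that $J^+$ ignores orientation), which is exactly $(\ast\ast)$ for the class $(a,a)$. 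Attaching $|b-a|$ small loops to $\gamma_a$ in its unbounded region (positively if $b>a$, negatively if $b<a$) produces a representative of the class $(a,b)$ without altering $w_0$; by Proposition~\ref{prop:J+}(b) each such loop leaves $J^+$ unchanged, while under $\iota$ these loops land in the complementary region of $\iota\gamma_a$ containing the origin, whose winding number, by the transformation rule $w(\iota K,\iota C)=w(K,C)-w_0(K)$ (a short residue computation), equals $-a$; Proposition~\ref{prop:J+}(b) then shifts $J^+(\iota\gamma_a)$ by $2a$ per loop, which is precisely $(\ast\ast)$ for the class $(a,b)$.

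The step I expect to be the main obstacle is this last one: getting the signs right when applying Proposition~\ref{prop:J+}(b), since the answer depends on the orientation of the loop being attached relative to the $\partial C$-orientation convention in that proposition, on the chosen orientation of $\iota\gamma_a$, and on which complementary region the loop lands in after inversion. (A sanity check on the simplest cases keeps the conventions honest: for $(w_0,r)=(1,0)$, $(\ast\ast)$ predicts, and one verifies, that $J^+$ drops by $2$ under inversion, matching the example where moving $\infty$ into a lobe of winding number $1$ turns the figure-eight $K_0$ into a limaçon.) An alternative route that bypasses this bookkeeping is to invoke a combinatorial formula expressing $J^+$ through the winding numbers of the complementary regions; since stereographic projection shifts all of these winding numbers by the single constant $-w_0(K)$, the identity $(\ast\ast)$, and hence the whole proposition, would then drop out by a direct substitution.
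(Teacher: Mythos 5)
Your argument is correct, but it takes a genuinely different route from the paper's. The paper establishes well-definedness on the sphere by flipping a single exterior arc over the point at infinity and reducing, via Whitney--Graustein with that arc held fixed, to the standard curves $K_j$, where the identity $-2(j-1)+j^2/2=(j-2)^2/2$ is checked by hand; diffeomorphism invariance is then deduced from Smale's theorem that $\mathrm{Diff}^+(S^2)$ is connected, together with a check for one reflection. You instead reduce both assertions to the single quantitative inversion formula $J^+(\iota K)-J^+(K)=2w_0(K)\bigl(r(K)-w_0(K)\bigr)$, which you verify by classifying regular homotopy classes in $\C^*$ by the pair $(w_0,r)$ and computing on the $\iota$-symmetric coil with exterior loops attached via Proposition~\ref{prop:J+}(b). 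Both proofs share the skeleton ``the discrepancy is a regular-homotopy invariant, so it suffices to check normal forms''; the difference lies in which normal forms and which elementary move is taken as basic. What your route buys is that your formula $(\ast\ast)$ is precisely identity~\eqref{eq: difference J+}, which the paper later \emph{derives from} this proposition and uses in Proposition~\ref{prop:JEM} and Corollary~\ref{cor:int-sum}; you obtain it independently, and you avoid invoking Smale's theorem. What it costs is the orientation bookkeeping in Proposition~\ref{prop:J+}(b) that you flag yourself; your signs do check out (the per-loop shift of $2a$ in the region of winding number $-a$ yields exactly $2w_0(r-w_0)$, consistent with the figure-eight/lima\c{c}on example and with~\eqref{eq: difference J+}). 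One small point you should make explicit: with the usual conventions the transition between two stereographic projections may be anti-holomorphic, so the reduction to the holomorphic inversion $z\mapsto 1/z$ also requires composing with a reflection --- but that case is already covered by your observation (i).
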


The first assertion means that if for a generic immersed loop $K$ on the sphere we remove a point from its complement and define $SJ^+(K)$ by the formula above for the resulting curve in the plane, then the definition does not depend on the choice of the point. Moreover, the resulting invariant for generic immersed loops on the sphere does not change under passage through triple self-intersections and inverse self-tangencies, and it increases by $2$ under positive passage through a direct self-tangency. 

\begin{proof}
For the first assertion, we need to prove that the quantity $SJ^+(K)$ for $K\subset\C$ does not change as an exterior arc $A$ of $K\subset\C$ is pulled over the point at infinity to an arc which encloses the rest of the curve. Let us denote the resulting curve by $K'$, see Figure \ref{fig:spherical-J+}. 
\begin{figure}
\center
\includegraphics[width=100mm]{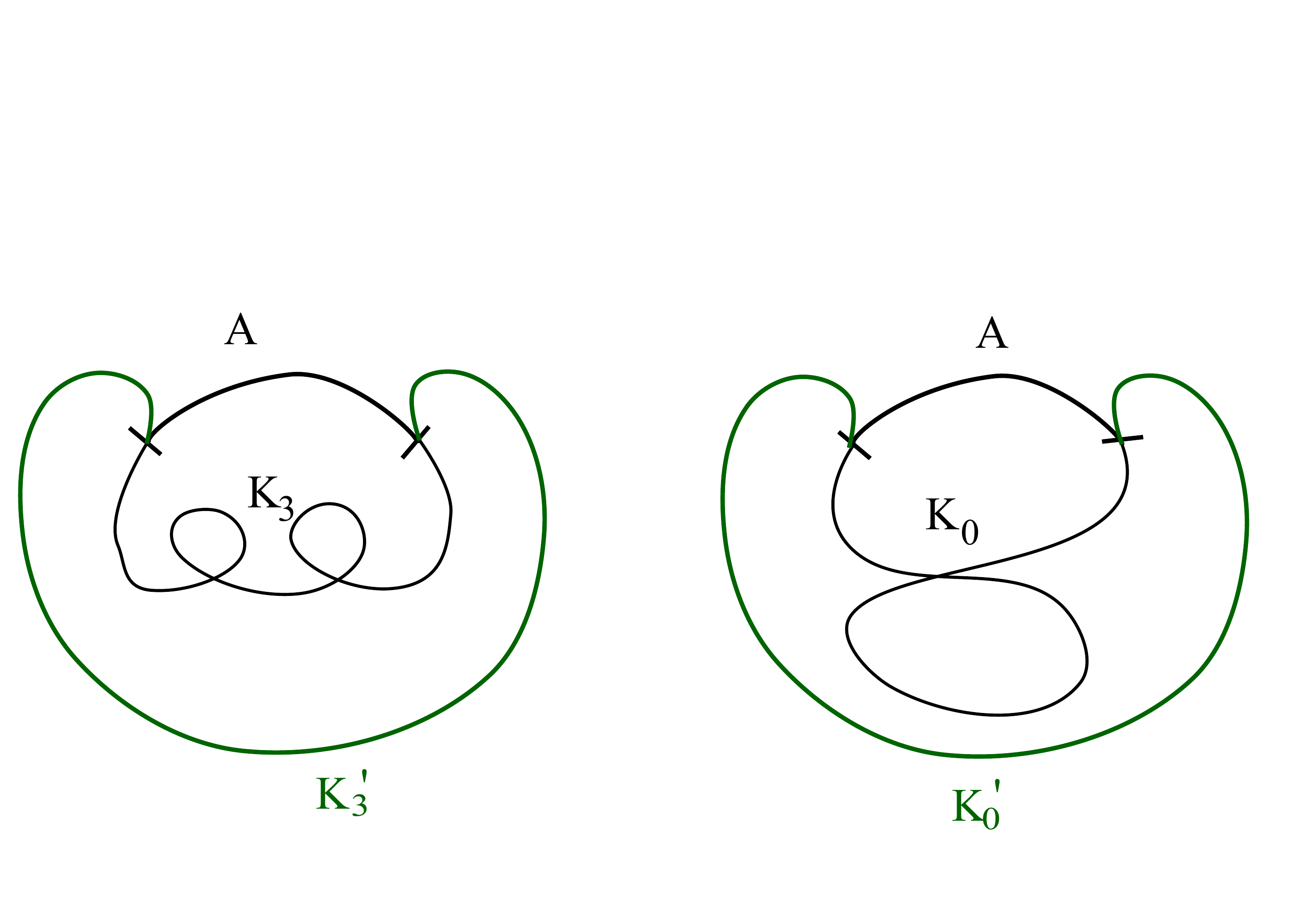}
\caption{Flipping an arc and the spherical $J^{+}$ invariant}
\label{fig:spherical-J+}
\end{figure}
By the proof of the Whitney--Graustein theorem~\cite{Whitney}, $K$ can be deformed to a standard curve $K_j$ by a regular homotopy keeping the arc $A$ fixed. Since $J^+(K)$, $J^+(K')$ change in the same way under this homotopy and $r(K)$, $r(K')$ remain unchanged, it therefore suffices to consider the case that $K=K_j$. Since $SJ^+(K)$ does not depend on the orientation of $K$, we may assume $r(K)=j\geq 0$. Suppose first that $j\geq 1$, so $K=K_j$ is a circle with $j-1$ interior loops. Then $K'$ is the standard curve $K_{-1}$ with $j-1$ {\em exterior} loops, and since by Proposition~\ref{prop:J+}(b) exterior loops do not affect $J^+$ we have $J^+(K')=0$. The rotation numbers are $r(K)=j$ and $r(K')=j-2$, so we get
$$
   SJ^+(K) = -2(j-1) + j^2/2 = (j-2)^2/2 = SJ^+(K').
$$ 
In the case $j=0$ we get $K'=K_{-2}$ and again $SJ^+(K') = -2+2^2/2 = 0 = SJ^+(K)$. This proves the first assertion. Invariance of $SJ^+$ under orientation preserving diffeomorphisms follows from homotopy invariance of $SJ^+$ and Smale's theorem~\cite{smale} that the group $\mathrm{Diff}^+(S^2)$ is homotopy equivalent to $SO(3)$ and therefore path connected. So it only remains to check invariance of $SJ^+$ under one orientation reversing diffeomorphism, e.g.~the reflection $R:\C\to\C$ at the $y$-axis. Since a regular homotopy from $K\subset\C$ to a standard curve $K_j$ gives a regular homotopy from $R(K)$ to $R(K_j)$ undergoing the same crossings through direct-self-tangencies, it suffices to consider the case $K=K_j$. But in this case invariance is obvious because we can choose $K_j$ so that $R(K_j)=K_j$, and the second assertion is proved. 
\end{proof}

We remark that the usual invariant $J^+$ for loops in the plane is invariant under planar diffeomorphisms, but for loops in $\C^*$ it is not invariant under the inversion $z\mapsto1/z$. 

\subsection{2-center Stark-Zeeman homotopies}

On a regular energy level set of a Stark-Zeeman system there is no equilibrium point, thus periodic orbits are nonconstant. Their footpoint projections fail to be an immersion only at collisions where velocity blows up, or at points on the boundary of the Hill's region (the ``zero-velocity curve'') where the velocity becomes zero. In~\cite{cieliebak-frauenfelder-koert} it is analyzed how these events can happen in a generic family of periodic orbits in a family of Stark-Zeeman systems, and it is shown that in either case the footpoint projections pass through a cusp with the creation/annihilation of a small loop. As these discussions are of local nature, the same holds for 2-center Stark-Zeeman systems, as well as for systems with singular potentials asymptotic to Newtonian ones such as partially regularized 2-center Stark-Zeeman systems.
Following~\cite{cieliebak-frauenfelder-koert}, we capture all these events in the following definition, where $E,M$ are two distinct points in $\C$. Here a closed curve is called {\em simple} if it is not multiply covered.

\begin{fed} 
A {\em 2-center Stark-Zeeman homotopy} is a smooth 1-parameter family $K^{s},\, s \in [0, 1]$ of simple closed curves in $\C$ which are generic immersions in $\C\setminus\{E,M\}$, except for finitely many $s\in [0,1]$ where the following events can occur (see Figures 5--8 in~\cite{cieliebak-frauenfelder-koert}): 
\begin{itemize}
\item $(I_E)$ birth or death of interior loops through cusps at $E$;
\item $(I_M)$ birth or death of interior loops through cusps at $M$;
\item $(I_{\infty})$ birth or death of exterior loops through cusps; 
\item $(II^-)$ crossings through inverse self-tangencies;
\item $(III)$ crossings through triple-self-intersections.
\end{itemize}
\end{fed}

The following proposition carries over directly from the corresponding result in~\cite{cieliebak-frauenfelder-koert} to the 2-center case. 

\begin{prop} 
A 1-parameter family $(K^{s})_{s \in [0, 1]}$ of simple closed curves in $\C \setminus \{E, M\}$ is a 2-center Stark-Zeeman homotopy if and only if there exists a smooth family of diffeomorphisms $F^{s}: \C \setminus \{E, M\} \to \C \setminus \{E, M\}$ such that, after suitable reparametrization, the curves $F^{s}(K^{s})$ are the footpoint projections of simple periodic orbits (possibly with collisions) in a generic family of 2-center Stark-Zeeman systems.
\end{prop}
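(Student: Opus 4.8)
The plan is to follow the proof of the one-center case given in~\cite{cieliebak-frauenfelder-koert} and to observe that every step there is of a purely local nature, hence unaffected by the presence of a second center. Two preliminary remarks organize the argument. First, the class of $2$-center Stark-Zeeman homotopies is preserved under post-composition with a smooth $1$-parameter family of diffeomorphisms of $\C$ preserving the set $\{E,M\}$: such a family carries cusps at $E$ or $M$ to cusps at $E$ or $M$, exterior cusps to exterior cusps, and isolated inverse self-tangencies and triple points to events of the same type. Second, the diffeomorphisms $F^s$ produced in (and required for) the statement are of this kind. Consequently the proposition reduces to the single assertion that, after composing with a suitable smooth family of such diffeomorphisms, the $2$-center Stark-Zeeman homotopies coincide with the families of footpoint projections of simple periodic orbits in generic families of $2$-center Stark-Zeeman systems.

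For the ``only if'' direction, take a generic $1$-parameter family of $2$-center Stark-Zeeman systems and a smooth family of simple periodic orbits $\gamma^s$ on the fixed regular level $\Sigma_c$, and set $K^s=\pi(\gamma^s)$. Away from collisions with $E$ or $M$ and away from the zero-velocity curve $\p\mathfrak{K}_c$ the footpoint projection is an immersion, and along a generic family only transverse double points appear there except for isolated inverse self-tangencies $(II^-)$ and triple points $(III)$; a direct self-tangency $(II^+)$ cannot occur, since on $\Sigma_c$ two points of $\gamma^s$ with the same position and the same velocity direction have, by conservation of energy, the same speed and hence the same initial condition, contradicting simplicity. Near a collision with $E$ (resp.\ $M$) one passes to the Levi--Civita regularization at that center, in which the orbit is smooth and a transverse crossing of $\{z=0\}$ projects downstairs to a cusp accompanied by the creation or destruction of a small interior loop, i.e.\ event $(I_E)$ (resp.\ $(I_M)$); near $\p\mathfrak{K}_c$ the standard turning-point analysis yields a cusp with a small exterior loop, event $(I_\infty)$. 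All these models are literally the ones from~\cite{cieliebak-frauenfelder-koert}, applied separately at $E$, at $M$, and along $\p\mathfrak{K}_c$. Hence $K^s$ is a $2$-center Stark-Zeeman homotopy.

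For the ``if'' direction one reverses the construction of~\cite{cieliebak-frauenfelder-koert}. After applying a suitable smooth family of diffeomorphisms we may put the curves $K^s$ in a normal form in which all interior cusps occur exactly at $E$ or $M$, all exterior cusps occur along a fixed circle $\Gamma$ (to play the role of the zero-velocity curve), with the prescribed tangencies there, and the curves are generic immersions of $\C\setminus\{E,M\}$ away from finitely many parameter values. One then builds a generic $1$-parameter family of $2$-center Stark-Zeeman systems in which these curves, suitably reparametrized, are periodic orbits at a fixed energy $c$: outside neighborhoods of $E$, $M$ and $\Gamma$ one prescribes a metric $g$, a potential $V$ and a magnetic term $\mathcal{B}$ making the given curve a periodic orbit on $(H_{V,g})^{-1}(c)$ --- the inverse problem for a mechanical system with a magnetic term, solved exactly as in the one-center case; near $E$ and near $M$ one glues in Newtonian (Levi--Civita) local models realizing the birth or death of interior loops at $(I_E)$ and $(I_M)$; near $\Gamma$ one arranges $V$ so that $c$ is a regular value with the curve tangent to $\{V=c\}$ in the manner producing $(I_\infty)$. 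The events $(II^-)$ and $(III)$ require no special construction, and genericity of the family is inherited from that of the homotopy.

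I expect the ``if'' direction, and specifically this patching, to be the main obstacle: one must verify that the two Levi--Civita local models at $E$ and $M$, the local model along $\Gamma$, and the globally prescribed data assemble into a single smooth family of Hamiltonians $H_{V,g}$ on $T^*U$ with $c$ a regular value and with $\mathfrak{K}_c\cup\{E,M\}$ bounded and simply connected for every $s$, depending smoothly on $s$, and that the finitely many degeneration times can be taken distinct and of the expected local type. Since the patching is entirely local and the one-center arguments of~\cite{cieliebak-frauenfelder-koert} apply verbatim at each of $E$ and $M$ separately, no genuinely new phenomenon appears; the content of the proof is this (somewhat lengthy) verification that the one-center toolkit globalizes in the presence of two centers.
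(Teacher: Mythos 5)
Your proposal is correct and follows exactly the route the paper takes: the paper gives no independent proof but simply notes that the one-center argument of Cieliebak--Frauenfelder--van Koert carries over because all the relevant analyses (cusps at collisions, cusps at the zero-velocity curve, exclusion of direct self-tangencies, and the inverse construction) are local. Your write-up is a faithful, more detailed elaboration of that same reduction, including the correct identification of the ``if'' direction's patching as the only point requiring verification.
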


The following lemma describes the topology of loops in $\C\setminus\{E,M\}$. Note that the group in (a) equals the fundamental group of the Moser regularized energy hypersurface $\Sigma_c^M\cong\R P^3\#\R P^3$ described in Proposition~\ref{prop:B-M}, the correspondence being given by the footpoint projection. 

\begin{lemma}\label{lem:top-loops}
(a) The fundamental group of $\C\setminus\{E,M\}$ modulo the moves $(I_E)$ and $(I_M)$ equals $\Z_2*\Z_2=\langle e,m\mid e^2=m^2=1\rangle$, where $e$ and $m$ correspond to loops around $E$ and $M$, respectively. \\
(b) The free homotopy classes of loops in $\C\setminus\{E,M\}$ modulo the moves $(I_E)$ and $(I_M)$ are the conjugacy classes $[e]$, $[m]$, and $[(em)^n]$ for $n\in\N_0$. \\
(c) The regular homotopy classes of immersed loops in $\C\setminus\{E,M\}$ modulo the moves $(I_E)$ and $(I_M)$ are classified by their free homotopy class as in (b) together with their rotation number.
\end{lemma}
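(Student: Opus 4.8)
\textbf{Proof plan for Lemma~\ref{lem:top-loops}.}

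The plan is to treat the three parts in order, bootstrapping each from the previous one. For part (a), I would start from the standard presentation $\pi_1(\C\setminus\{E,M\})=F_2=\langle e,m\rangle$, the free group on small loops around $E$ and $M$. The moves $(I_E)$ and $(I_M)$ — birth or death of an interior loop through a cusp at $E$ resp. $M$ — have the effect on the word representing a loop of inserting or deleting a subword $e^{\pm 2}$ resp. $m^{\pm 2}$ (a small loop around $E$ traversed twice, born at the cusp, with either orientation). So the quotient of $F_2$ by the normal subgroup generated by $e^2$ and $m^2$ is exactly $\langle e,m\mid e^2=m^2=1\rangle=\Z_2*\Z_2$. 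I should be slightly careful here that a cusp move at $E$ creates a loop that is freely homotopic to $e^2$ but, being based at a point of the curve possibly far from $E$, is conjugate to $e^{2}$ in $F_2$; since we are quotienting by the \emph{normal} closure this makes no difference, and I would say a sentence to that effect. This matches the fundamental group $\pi_1(\R P^3\#\R P^3)$ from Proposition~\ref{prop:B-M}(b), as the note before the lemma observes.

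For part (b), free homotopy classes correspond to conjugacy classes in $\Z_2*\Z_2$, so I would simply invoke the computation already carried out in the proof of Proposition~\ref{prop:B-M}(c): every element of $\Z_2*\Z_2$ is of the form $(em)^n$, $m(em)^n$, or $(em^n)e=(em)^{n}e$... more precisely $c_n=e(me)^n$, and using $mb_nm^{-1}=c_{n-1}$, $ec_ne^{-1}=b_{n-1}$ one sees $b_n,c_n$ are all conjugate to $e$ or $m$, while the $(em)^n$ are pairwise non-conjugate (and $(em)^n$ is conjugate to $(em)^{-n}=(me)^{n}$ only when... they are distinct as conjugacy classes for $n\in\N_0$, with $(em)^{-1}$ conjugate to $me$, hence to... here I need the standard fact that in a free product the conjugacy class of a cyclically reduced word of length $\geq 2$ is determined up to cyclic permutation, so $(em)^n$ and $(me)^n$ are conjugate, and these exhaust the ``long'' classes indexed by $n\in\N_0$). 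So the free homotopy classes are precisely $[e]$, $[m]$, and $[(em)^n]$, $n\in\N_0$ — the same list as in Proposition~\ref{prop:B-M}(c), which is consistent with the footpoint projection identification.

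For part (c), I would use the Whitney--Graustein theorem: regular homotopy classes of immersed loops in a surface are classified by the underlying free homotopy class together with, within each such class, the rotation number — but with the subtlety that in a non-simply-connected surface the rotation number is only well-defined once one fixes a trivialization of the tangent bundle, and changing the basepoint or the representative within the free homotopy class can shift it. The point is that $\C\setminus\{E,M\}$ is an open subset of $\C$, hence parallelizable with a \emph{canonical} (constant) framing, so the rotation number $r(K)\in\Z$ is globally well-defined for any immersed loop. Then: two immersed loops in $\C\setminus\{E,M\}$ are regularly homotopic there iff they are freely homotopic and have the same rotation number (Whitney--Graustein, relative version / see~\cite{Whitney}); and the moves $(I_E),(I_M)$ change the free homotopy class by killing $e^{\pm2}$ or $m^{\pm2}$ as in (a) while changing the rotation number by $0$ — a cusp creates a small loop together with a cusp point, and the contributions to the turning of the velocity from the new loop and from straightening the cusp cancel, so $r$ is unchanged. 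Hence the complete invariant modulo these moves is the pair (free homotopy class from (b), rotation number), and conversely every such pair is realized. The main obstacle I anticipate is part (c): precisely, making rigorous the claim that the cusp moves $(I_E),(I_M)$ preserve the rotation number, and checking that no further identifications among rotation numbers are forced — i.e., that within a fixed free homotopy class $[(em)^n]$ (or $[e]$, $[m]$) the moves do not connect loops of different rotation number. This requires tracking the turning number carefully through a cusp (a local $+1$ or $-1$ contribution from the cusp itself versus the winding of the small loop) and observing the net change is zero; I would do this by the explicit local model of the cusp-and-loop event as in Figures 5--8 of~\cite{cieliebak-frauenfelder-koert}.
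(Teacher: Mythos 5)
Parts (a) and (b) of your plan are fine and essentially coincide with the paper's proof. The paper phrases the effect of $(I_E)$ as converting $e$ to $e^{-1}$ rather than as insertion of a conjugate of $e^{\pm2}$, but on homotopy classes these are the same operation and both yield the quotient of the free group by the normal closure of $e^2$ and $m^2$, i.e.\ $\Z_2*\Z_2$; part (b) is then exactly the conjugacy class computation already carried out in the proof of Proposition~\ref{prop:B-M}(c), as you say.

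The gap is in part (c), precisely at the step you yourself single out as the crux: the claim that $(I_E)$ and $(I_M)$ preserve the rotation number is false. In the local model the near-collision arc is the image under the Levi-Civita map $z\mapsto z^2$ (centered at $E$) of an arc crossing the origin transversally; writing $q=z^2$ one has $\arg\dot q=\arg z+\arg\dot z+\mathrm{const}$, and as the upstairs arc sweeps across $0$ the total variation of $\arg z$ along it jumps by $2\pi$ while that of $\arg\dot z$ varies continuously. Hence a single $(I_E)$ move changes $r$ by $\pm1$ (and $w_E$ by $\pm2$, with correlated sign). Concretely, a small embedded circle near $E$ not enclosing $E$ (trivial word, $r=1$) is one $(I_E)$ move away from a lima\c{c}on winding twice around $E$ (word $e^2$, $r=2$): upstairs this is just a circle sweeping across the origin. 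You can also deduce this from Lemma~\ref{lem:rot-number} together with the fact that the Levi-Civita lift $\tilde K_E$ stays smooth through the move, so $r(\tilde K_E)=2r(K)-w_E(K)$ is constant while $w_E$ jumps by $2$, forcing $r$ to jump by $1$. Consequently the verification you propose (``the contributions \dots\ cancel, so $r$ is unchanged'') would fail, and the pair (free homotopy class mod moves, $r$) does not literally descend to the quotient; the quantity that is invariant under the moves is $r-w_E/2-w_M/2$, not $r$ itself. What survives, and what the paper actually uses later (e.g.\ in the proof of Proposition~\ref{prop:even-even-mod4}), is the completeness direction: by the Whitney--Graustein normalization every immersed loop can be connected, by a regular homotopy in $\C\setminus\{E,M\}$ together with $(I_E)$ and $(I_M)$, to a standard representative of its class in (b) with kinks attached realizing any prescribed rotation number, so that the class in (b) together with the rotation number of a representative determine the equivalence class. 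Your argument for (c) should be rewritten along these lines rather than via invariance of $r$.
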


\begin{proof}
Part (a) holds because the fundamental group of $\C\setminus\{E,M\}$ equals $\Z*\Z=\langle e,m\mid -\rangle$ and the moves $(I_E)$ and $(I_M)$ convert $e$ to $e^{-1}$ resp.~$m$ to $m^{-1}$. Part (b) follows from Proposition~\ref{prop:B-M}(c), and part (c) follows from the proof of the Whitney--Graustein theorem~\cite{Whitney}.   
\end{proof}

\section{$J^{+}$-like invariants for two-center Stark-Zeeman systems}

In this section we define four $J^{+}$-like invariants for two-center Stark-Zeeman systems and investigate the relations among these.
Throughout this section we assume that the metric entering the Stark-Zeeman Hamiltonian is conformal to the standard metric, so that the partial Levi-Civita regularizations at $E$ and $M$ as well as the Birkhoff regularization are defined. 

\subsection{$\mathcal{J}_{0}$ with no regularization}

First we will define a $J^{+}$-like invariant for periodic orbits of 2-center Stark-Zeeman systems without invoking any regularizations. 
Following~\cite{cieliebak-frauenfelder-koert}, the idea is to balance out the possible change of $J^{+}$ at ``disasters'' that a Stark-Zeeman homotopy may encounter by winding numbers. As we have two possible double collisions, we have to use both winding numbers around the Earth and Moon:

\begin{fed} We define
$$\mathcal{J}_{0} (K):=J^{+} (K) + w_{E} (K)^{2}/2 + w_{M} (K)^{2}/2$$
where $w_{E}$ and $w_{M}$ are respectively the winding numbers of the curve around $E$ and $M$. 
\end{fed}

\begin{prop}\label{prop: J_{0} inv}
The quantity $\mathcal{J}_{0}$ is invariant under Stark-Zeeman homotopies. 
\end{prop}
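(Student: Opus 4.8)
The plan is to verify that $\mathcal{J}_0$ is unchanged under each of the five elementary events $(I_E)$, $(I_M)$, $(I_\infty)$, $(II^-)$, $(III)$ appearing in the definition of a 2-center Stark-Zeeman homotopy, since a general homotopy is a concatenation of such events interspersed with generic immersions (in $\C\setminus\{E,M\}$) along which $J^+$, $w_E$, $w_M$ are all visibly constant. Throughout I will use Proposition~\ref{prop:J+}: $J^+$ is unaffected by $(II^-)$ and $(III)$, so the two winding numbers $w_E$, $w_M$ and hence $\mathcal{J}_0$ are trivially invariant under those two events. This reduces the problem to the three birth/death-of-loop events.

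For the exterior-loop event $(I_\infty)$: creating or destroying a small loop in the unbounded component of $\C\setminus K$ does not change $w_E(K)$ or $w_M(K)$ (the points $E,M$ are not in that component), and by Proposition~\ref{prop:J+}(b) adding a loop in a component $C$ changes $J^+$ by $-2\,w(K,C)$, which is $0$ for the unbounded component. Hence $\mathcal{J}_0$ is invariant. For the interior-loop event $(I_E)$: here a small loop encircling $E$ is born or dies; this changes $w_E$ by $\pm1$ and leaves $w_M$ unchanged. I would pass through the cusp via an inverse self-tangency picture exactly as in~\cite{cieliebak-frauenfelder-koert}: the newly created small loop sits in a component $C$ of $\C\setminus K$ containing $E$, with $w(K,C)=w_E(K)$ on the appropriate side, so $J^+$ changes by $\mp 2 w_E$. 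Writing $w_E \mapsto w_E \pm 1$ and comparing, the change in $w_E^2/2$ is $\pm w_E + 1/2$, which must be offset by the change in $J^+$ together with a half-integer contribution; the correct bookkeeping — done in~\cite{cieliebak-frauenfelder-koert} for the one-center case — shows that $J^+ + w_E^2/2$ is invariant, and since $w_M$ is untouched so is $\mathcal{J}_0$. The event $(I_M)$ is symmetric under exchanging $E\leftrightarrow M$.

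The main obstacle — and the only place requiring genuine care — is getting the signs and the half-integer bookkeeping right in the $(I_E)$ and $(I_M)$ cases, i.e.\ checking that the jump of $J^+$ under the cusp move is precisely $\mp 2 w_E(K)$ with the orientation/side conventions matching those under which Proposition~\ref{prop:J+}(b) is stated. Since this is a purely local computation identical to the one-center analysis, I would simply invoke the corresponding verification in~\cite{cieliebak-frauenfelder-koert} applied separately at $E$ and at $M$ (the other center being a passive spectator that contributes nothing to the local picture), and note that $\mathcal{J}_0(K) = \bigl(J^+(K)+w_E(K)^2/2\bigr) + w_M(K)^2/2$ is invariant under $(I_E)$ because the parenthesized combination is, and under $(I_M)$ by symmetry, completing the proof.
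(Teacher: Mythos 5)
Your proof follows exactly the same route as the paper's: check the five moves one by one, observe that $(II^-)$, $(III)$, $(I_\infty)$ leave $J^+$, $w_E$, $w_M$ unchanged, and reduce the cusp moves $(I_E)$, $(I_M)$ to the one-center statement of \cite[Proposition 4]{cieliebak-frauenfelder-koert} that $J^++w_E^2/2$ (resp.\ $J^++w_M^2/2$) is invariant, with the other center as a spectator. That is precisely the paper's argument, so the overall structure is fine.

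However, your description of the local model at $(I_E)$ is wrong, and the ``half-integer contribution'' you try to wave away is the symptom. A cusp at $E$ changes $w_E$ by $\pm 2$, not $\pm 1$ (in the Levi-Civita picture a line through the origin sweeps to either side, and the image small loop reverses its orientation, so its contribution to the winding number flips from $+1$ to $-1$; the paper states this explicitly in the proof of Proposition~\ref{prop: w_{E} odd: J_{E} J_{0}}, and Lemma~\ref{lem:top-loops}(a) encodes it as $e\mapsto e^{-1}$). With your $\pm 1$ the bookkeeping genuinely cannot close: $w_E^2/2$ would jump by the half-integer $\pm w_E+1/2$, while $J^+$ of a generic immersed plane loop is always an even integer and changes by even integers under adding or removing interior loops, so no change in $J^+$ could offset it. With the correct jump $w_E\mapsto w_E\pm 2$ the change in $w_E^2/2$ is the even integer $\pm 2w_E+2$, which is exactly what the $J^+$ side can and does absorb. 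Since you ultimately delegate the verification to \cite{cieliebak-frauenfelder-koert} (as does the paper), the proof still stands, but you should correct the local picture rather than leave the parity mismatch unresolved.
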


\begin{proof} Under the moves $(II^-)$ and $(III)$ all of the involved quantities $J^{+}, w_{E}, w_{M}$ are invariant, hence also $\mathcal{J}_{0}$. The same holds for the move $(I_{\infty})$ because $J^{+}$ as well as the winding numbers $w_{E}, w_{M}$ are invariant under connected sum with an exterior loop. For $(I_E)$, we know from~\cite[Proposition 4]{cieliebak-frauenfelder-koert} that at a birth or death of loops though cusps at $E$ the quantity $J^{+} + w_{E}^{2}/2$ is invariant, while $w_{M}^{2}/2$ is clearly invariant, therefore $\mathcal{J}_{0}$ is invariant. The same argument works for $(I_M)$. 
\end{proof}

\subsection{$\mathcal{J}_{E}, \mathcal{J}_{M}$ via partial regularizations}

We may regularize the double collisions with the primary $E$ (resp.~$M$) by Levi-Civita regularization.  In this partially regularized system, the other primary $M$ (resp.~$E$) is pulled back to two singularities that we denote by $M_{1}, M_{2}$ (resp.~$E_{1}, E_{2}$).  We denote by $\tilde{K}_{E}$ (resp.~$\tilde{K}_{M}$) a connected component of the preimage of a curve $K$ in the partially regularized system with respect to $E$ (resp.~$M$). 

\begin{fed}\label{def: J_{E}, J_{M}}We set
$$\mathcal{J}_{E} (K):=J^{+} (\tilde{K}_{E}) + w_{M_{1}} (\tilde{K}_{E})^{2}/2 + w_{M_{2}} (\tilde{K}_{E})^{2}/2,$$
$$\mathcal{J}_{M} (K):=J^{+} (\tilde{K}_{M}) + w_{E_{1}} (\tilde{K}_{M})^{2}/2 + w_{E_{2}} (\tilde{K}_{M})^{2}/2.$$
\end{fed}

\begin{prop}
The quantities $\mathcal{J}_{E}(K)$, $\mathcal{J}_{M}(K)$ do not depend on the choice of the connected components $\tilde{K}_{E}$, $\tilde{K}_{M}$ and are invariant under Stark-Zeeman homotopies. 
\end{prop}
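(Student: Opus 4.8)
The plan is to reduce the claim to the corresponding statement for the one-center Levi-Civita regularization already established in~\cite{cieliebak-frauenfelder-koert}, by observing that the partial Levi-Civita map behaves, locally near each singularity, exactly like the maps treated there, and globally like a branched double cover whose combinatorics we control via winding numbers. I treat $\mathcal{J}_E$; the case of $\mathcal{J}_M$ is identical after swapping the roles of $E$ and $M$.

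First I would address independence of the choice of component $\tilde K_E$. The partial Levi-Civita map is the restriction of $L(z)=z^2$ composed with a translation putting $E$ at the origin; it is a double cover of $\C\setminus\{E\}$ branched nowhere (the branch point $E$ has been removed), with deck transformation $z\mapsto -z$. A simple closed curve $K\subset\C\setminus\{E,M\}$ therefore has preimage $L^{-1}(K)$ consisting of either one or two simple closed curves, according to the parity of $w_E(K)$ (this is the same dichotomy as in Proposition~\ref{prop:4.2}(b)). The deck transformation $\tau(z)=-z$ (centered at $E$) interchanges the two components when there are two, and maps the single component to itself when $w_E(K)$ is odd. Since $\tau$ is an orientation-preserving diffeomorphism of $\C$, and it sends $M_1\leftrightarrow M_2$, we have $J^+(\tilde K_E)=J^+(\tau\tilde K_E)$ and $w_{M_i}(\tilde K_E)=w_{M_{\sigma(i)}}(\tau\tilde K_E)$ for the transposition $\sigma$; hence the symmetric combination defining $\mathcal{J}_E$ is unchanged. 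In the odd case, since $\tilde K_E$ is the unique component, there is nothing to choose. So $\mathcal{J}_E(K)$ is well defined.

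Next, invariance under Stark-Zeeman homotopies. Here I would use that a 2-center Stark-Zeeman homotopy $K^s$ lifts, after passing to components as above, to a family $\tilde K_E^s$ which is itself a Stark-Zeeman homotopy for the partially regularized system — this is exactly the content of the sentence in the text that the analysis of~\cite{cieliebak-frauenfelder-koert} is local and carries over to systems with Newtonian-type singularities, applied here at the two singularities $M_1,M_2$ (the singularity at $E$ has been regularized, so no $(I_E)$-type event occurs upstairs). The events $(I_E)$ downstairs lift to events at $M_1$ or $M_2$? No: an $(I_E)$ event downstairs corresponds to a birth/death of a loop through a cusp at $E$, which under $L$ lifts to a \emph{regular} homotopy upstairs (the regularized picture), so it produces at most a genuine regular homotopy or a connected-sum-with-exterior-loop move, under which $J^+$ and all the $w_{M_i}$ are invariant. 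An $(I_M)$ event downstairs lifts to an $(I_{M_1})$ or $(I_{M_2})$ event (or to a simultaneous pair, if $w_E(K)$ is even and the two components each see it); by~\cite[Proposition 4]{cieliebak-frauenfelder-koert} applied at $M_1$ (resp.~$M_2$), the combination $J^+(\tilde K_E)+w_{M_1}(\tilde K_E)^2/2$ (resp.~with $M_2$) is invariant, and the remaining squared winding number is unchanged, so $\mathcal{J}_E$ is invariant. The moves $(I_\infty)$, $(II^-)$, $(III)$ lift to moves of the same type (or to pairs thereof), under all of which $J^+$ and the winding numbers $w_{M_1},w_{M_2}$ are individually invariant, by Proposition~\ref{prop:J+}(a) and (b).

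The main obstacle is the bookkeeping at the lifting step: one must check carefully that each of the five downstairs events lifts to a legitimate combination of upstairs moves of the allowed types, with no loose ends at $z=0$ (the regularized point) and with the correct matching of which of $M_1,M_2$ is affected when $w_E(K^s)$ changes parity during the homotopy — i.e.~when $K^s$ crosses $E$. Strictly, a genuine 2-center Stark-Zeeman homotopy never has $K^s$ passing through $E$ except at an $(I_E)$ cusp event, at which $w_E$ does not change, so the parity of $w_E(K^s)$, and hence the number of components of the lift, is constant along the homotopy; this makes the lifted family $\tilde K_E^s$ a well-defined continuous family and closes the argument. I would record this parity-constancy explicitly as the first step of the invariance proof.
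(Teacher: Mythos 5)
Your argument follows essentially the same route as the paper's: reduce to $\mathcal{J}_E$, note that $(II^-)$, $(III)$, $(I_\infty)$ leave all ingredients unchanged, that $(I_E)$ lifts to a smooth regular homotopy of $\tilde K_E$, and that $(I_M)$ is handled by the one-center result \cite[Proposition 4]{cieliebak-frauenfelder-koert} applied at $M_1$ and/or $M_2$; your well-definedness argument via the deck transformation $z\mapsto -z$ is a correct addition that the paper leaves implicit. Two small corrections are needed. First, you have the parities reversed in the $(I_M)$ case: $\tilde K_E$ passes through cusps at \emph{both} $M_1$ and $M_2$ when $w_E(K)$ is \emph{odd} (the preimage is then connected and contains both preimages of the cusp point), and through a cusp at exactly one of them when $w_E(K)$ is even (the two cusps then land on different components). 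In the double-cusp case your phrase ``the remaining squared winding number is unchanged'' is false; one must instead apply the one-center proposition at $M_1$ and at $M_2$ separately, which works because the two events are local and independent. Second, at an $(I_E)$ event the winding number $w_E$ \emph{does} change---by $\pm 2$, since the move replaces a local arc contributing $\mp1$ to the winding around $E$ by one contributing $\pm1$---but its parity, which is all you need for the constancy of the number of components of the lift, is indeed preserved.
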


\begin{proof}
We will do the proof for $\mathcal{J}_{E}$, which implies the one for $\mathcal{J}_{M}$ by switching the roles of $E$ and $M$. As in the proof of Proposition~\ref{prop: J_{0} inv}, $\mathcal{J}_{E}(K)$ is invariant under $(II^-)$, $(III)$ and $(I_{\infty})$. Invariance under $(I_E)$ holds because $\tilde K_E$ remains smooth under this move. For $(I_M)$, note that each passage of $K$ through a cusp at $M$ corresponds to a passage of $\tilde K_E$ through cusps at both $M_1$ and $M_2$ (if $w_E(M)$ is odd), or through a cusp at one of $M_1$, $M_2$ (if $w_E(K)$ is even). In either case, the change in $J^{+}(\tilde{K}_{E})$ is offset by the change in $w_{M_{1}}(\tilde{K}_{E})^{2}/2 + w_{M_{2}}(\tilde{K}_{E})^{2}/2$. This proves invariance of $\mathcal{J}_{E}$ under Stark-Zeeman homotopies. 
\end{proof}

The following lemma provides alternative expressions for $\JJ_E$ and $\JJ_M$. 

\begin{lemma}\label{lem:JE}
If $w_E(K)$ is odd, then
\begin{align*}
  \mathcal{J}_{E}(K) = J^{+}(\tilde{K}_{E}) + w_{M}(K)^{2}. 
  \end{align*}
If $w_E(K)$ is even and $K=K_1\#K_2$ is a connected sum of immersions $K_1$ and $K_2$ located near $E$ and $M$, respectively, then
\begin{align*}
  \mathcal{J}_{E}(K) = J^{+}(\tilde{K}_{E}) + w_{M}(K)^{2}/2. 
  \end{align*}
Analogous formulas hold for $\JJ_M$. 
\end{lemma}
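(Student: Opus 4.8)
The plan is to reduce the two cases to the behaviour of the Birkhoff-type covering map $L(z)=z^2$ (the partial regularization at $E$) on winding numbers, and then to apply Proposition~\ref{prop:J+}(a) and its consequences on how $J^+$ interacts with connected sum. Throughout, write $P_E\colon\C\setminus\{E\}\to\C\setminus\{E\}$ for the 2-to-1 branched cover underlying the partial Levi-Civita regularization at $E$ (after normalizing $E$ to $0$), so that the two preimages $M_1,M_2$ of $M$ satisfy $P_E(M_1)=P_E(M_2)=M$, and $\tilde K_E$ is one chosen connected component of $P_E^{-1}(K)$.

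First I would treat the case $w_E(K)$ odd. Then by Proposition~\ref{prop:4.2}(b) (applied to the analogous partial cover, or directly from the monodromy of $L$ around $0$) the preimage $P_E^{-1}(K)$ is connected, so $\tilde K_E$ is the unique component and it double-covers $K$ as a loop. The key computation is that, since the deck transformation swaps $M_1\leftrightarrow M_2$, the winding numbers of $\tilde K_E$ around $M_1$ and around $M_2$ are equal, and each equals $w_M(K)$: indeed $w_{M_1}(\tilde K_E)+w_{M_2}(\tilde K_E)$ counts (with sign) the intersections of $\tilde K_E$ with a path from $M_1$ to $M_2$, which pushes down to twice the intersection of $K$ with a path from $M$ to $M$ that together with the branch cut winds appropriately — more cleanly, $w_{M_i}(\tilde K_E)$ equals the winding number of $K$ around $M$ because $P_E$ restricted to a small neighbourhood of $M_i$ is a diffeomorphism and the lift of a loop around $M$ is a loop around $M_i$. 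Hence $w_{M_1}(\tilde K_E)^2/2+w_{M_2}(\tilde K_E)^2/2 = w_M(K)^2/2+w_M(K)^2/2 = w_M(K)^2$, and substituting into Definition~\ref{def: J_{E}, J_{M}} gives $\mathcal{J}_E(K)=J^+(\tilde K_E)+w_M(K)^2$.

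Next I would treat the case $w_E(K)$ even with $K=K_1\#K_2$, $K_1$ near $E$ and $K_2$ near $M$. Now $P_E^{-1}(K)$ has two components by Proposition~\ref{prop:4.2}(b); invariance of $\mathcal{J}_E$ under the choice of component was already established, so I may pick the convenient one. Since $w_E(K_2)=0$, the loop $K_2$ lifts to two disjoint copies $\tilde K_2,\tilde K_2'$ near $M_1$ and near $M_2$ respectively; likewise $K_1$ lifts either to one loop (if $w_E(K_1)$ is odd) double-covering $K_1$, or to two loops (if even). Choosing $\tilde K_E$ to be the component $\tilde K_1\#\tilde K_2$ containing the lift $\tilde K_2$ sitting around $M_1$ (say), we get $w_{M_1}(\tilde K_E)=w_M(K_2)=w_M(K)$ and $w_{M_2}(\tilde K_E)=0$, whence $w_{M_1}(\tilde K_E)^2/2+w_{M_2}(\tilde K_E)^2/2=w_M(K)^2/2$. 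Substituting again into Definition~\ref{def: J_{E}, J_{M}} yields $\mathcal{J}_E(K)=J^+(\tilde K_E)+w_M(K)^2/2$. The symmetric statement for $\mathcal{J}_M$ follows by interchanging $E$ and $M$.

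The main obstacle is the bookkeeping of winding numbers under the branched double cover: one must argue carefully that a loop around $M$ downstairs lifts to a loop (not a path) around each $M_i$ with the same winding number, and in the split case that the two sheets separate the two lifts of $K_2$ so that exactly one of $w_{M_1}(\tilde K_E),w_{M_2}(\tilde K_E)$ is nonzero. This is where the hypothesis $K=K_1\#K_2$ with the factors localized near the two centers is used essentially — without it the two components of $P_E^{-1}(K)$ need not be arranged so that one of the $M_i$-winding numbers vanishes, and one would only get an inequality or a statement about the sum $w_{M_1}^2+w_{M_2}^2$ rather than the clean $w_M(K)^2/2$. A clean way to organize the argument is to note that $P_E$ is, near each of $M_1,M_2$, an orientation-preserving local diffeomorphism, so the local winding numbers match, and then to use that $\tilde K_E$, being connected and containing exactly one of the two lifts of the $M$-localized factor $K_2$, inherits all of its $M$-winding from that single lift.
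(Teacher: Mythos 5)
Your proposal is correct and follows essentially the same route as the paper: in the odd case one shows $w_{M_1}(\tilde K_E)=w_{M_2}(\tilde K_E)=w_M(K)$ by counting signed intersections of $\tilde K_E$ with the preimage of a ray emanating from $M$ (the paper uses the ray $[1,\infty)$, whose preimage under $z\mapsto z^2$ is $[1,\infty)\cup(-\infty,-1]$), and in the even connected-sum case one observes that $\tilde K_E=\tilde K_1\#\tilde K_2$ with $\tilde K_2$ sitting near only one of $M_1,M_2$, giving $w_{M_1}(\tilde K_E)=w_M(K)$ and $w_{M_2}(\tilde K_E)=0$. One caution: your parenthetical ``more cleanly'' justification --- that $w_{M_i}(\tilde K_E)=w_M(K)$ because $P_E$ is a local diffeomorphism near $M_i$ --- is not by itself a valid argument (winding number is not a local quantity near the center), so you should rely on the signed intersection count with the lifted ray, which is what the paper does.
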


\begin{proof}
Again, it suffices to consider $\JJ_E$. If $w_E(K)$ is odd, then the preimage $L_E^{-1}(K)$ of $K$ under the complex square map $L_E$ around $E$ is connected and $\tilde K_E=L_E^{-1}(K)$. We normalize the positions of the primaries to $E=0$, $M=1$ so that $L_E(z)=z^2$. Then the preimage under $L_E$ of the ray $[1,\infty)$ emanating from $M=1$ is the union of the rays $[1,\infty)$ emanating from $M_1=1$ and $(-\infty,-1]$ emanating from $M_2=-1$. Since each crossing of $K$ through the ray $[1,\infty)$ corresponds to crossings of $\tilde K_E$ though the rays $[1,\infty)$ and $(-\infty,-1]$ with the same sign, and the winding numbers are given by the signed counts of such crossings, it follows that $w_M(K)=w_{M_1}(\tilde K_E)=w_{M_2}(\tilde K_E)$. The formula $\mathcal{J}_{E}(K)=J^{+} (\tilde{K}_{E}) + w_{M} (K)^{2}$ is an immediate consequence of this. 

Now suppose that $w_E(K)$ is even and $K=K_1\#K_2$ is a connected sum of immersions $K_1$ and $K_2$ located near $E$ and $M$, respectively. Then $\wt K_E=\wt K_1\#\wt K_2$ for components $\wt K_i$ of $L_E^{-1}(K_i)$, $i=1,2$. Since $\wt K_1$ is located near $E$ and $\wt K_2$ near one preimage of $M$, say $M_1$, we have $w_{M_1}(\wt K_E)=w_M(K)$ and $w_{M_2}(\wt K_E)=0$, hence $\mathcal{J}_{E}(K) = J^{+}(\tilde{K}_{E}) + w_{M}(K)^{2}/2$. 
\end{proof}

\begin{ex}\label{ex:non-conn-sum}
Let {$K\subset\C\setminus\{E,M\}$} be an immersed loop winding twice counterclockwise around $E$ and $M$ with one self-intersection, see Figure \ref{fig:not-conn-sum}. 
\begin{figure}
\center
\includegraphics[width=100mm]{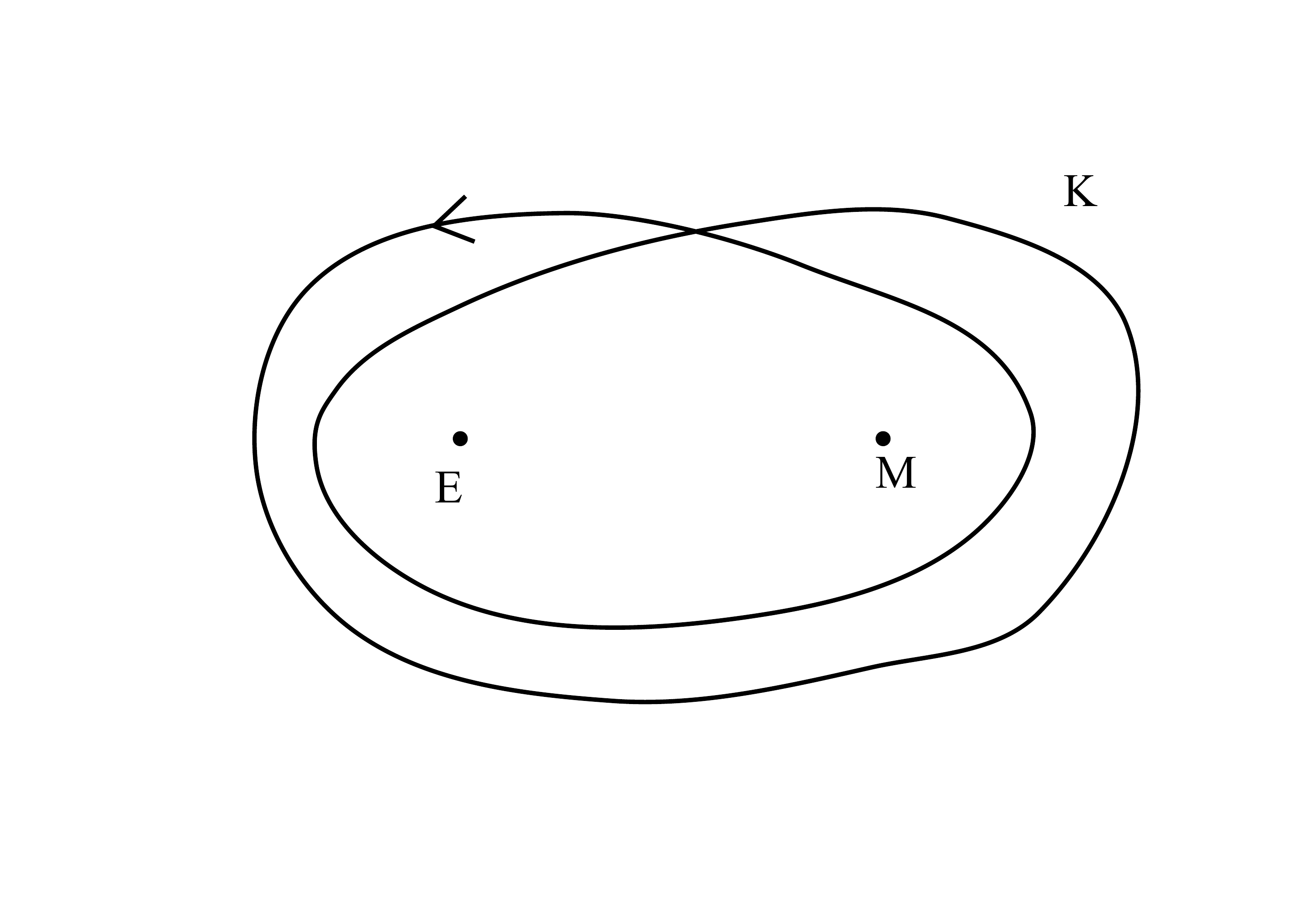}
\caption{A loop which is not a connected sum of loops around $E$ and $M$}
\label{fig:not-conn-sum}
\end{figure}
Then $\tilde K_E$ is an embedded loop winding once counterclockwise around $E,M_1,M_2$, so we have $w_E(K)=w_M(K)=2$ and $w_E(\tilde K_E)=w_{M_1}(\tilde K_E)=w_{M_2}(\tilde K_E)=1$. Hence $\mathcal{J}_{E}(K)=0+1/2+1/2=1$. Since the expression $J^{+}(\tilde{K}_{E}) + w_{M}(K)^{2}/2$ can never be an odd integer, this shows that the second assertion in Lemma~\ref{lem:JE} does not hold without the connected sum hypothesis. By invariance of $\JJ_E$, it also shows that this $K$ is not Stark-Zeeman homotopic to a connected sum of two immersed loops located near $E$ and $M$.   
\end{ex}

\subsection{{$(\mathcal{J}_{E, M}, n)$} via simultaneous regularization}

Consider now the Birkhoff regularization map $B:\C^*\to\C$, where we again choose $E=-1$ and $M=+1$. For a loop $K\subset\C\setminus\{E,M\}$ we denote by $\tilde K\subset\C^*$ one component of its preimage under $B$. 
%
%
%
Recall that the regularized Hill's region $B^{-1}(\mathfrak{K}_c)$ is an annulus winding around the origin and containing no more singularities. However, the invariant $J^+(\tilde K)$ may change under a Stark-Zeeman homotopy due to the addition of {\em interior} loops which are in the preimage of exterior loops added to the original curve $K$ under a $(I_\infty)$ move. Moreover, in the case that $B^{-1}(K)$ is disconnected its two preimages may have different $J^+$-invariants. Nevertheless, we can still extract an invariant from $J^+(\tilde K)$. 
\begin{fed}\label{defi J_{e, m}} 
For a generic immersed loop $K\subset\C\setminus\{E,M\}$, we choose a component $\tilde K\subset\C^*$ of its preimage under $B$ and set
$$
   n(K):=|w_{0}(\tilde{K})|\in\N_0.
$$ 
Moreover, we define 
$$
   J_{E, M}(K) := \begin{cases}
      \mathcal{J}^{+}(\tilde{K}) & \text{ if }n(K)=0, \cr
      \mathcal{J}^{+}(\tilde{K}) \mod 2 n(K) & \text{ if }n(K)>0.
   \end{cases}
$$
\end{fed}

{To show that these are well-defined, we shall need the following lemma:}

\begin{lemma}\label{lem:rot-win} 
If $B^{-1}(K)$ has two connected components $\tilde{K}_1,\tilde{K}_2$, then
$$
   r(\tilde{K}_{2})- r(\tilde{K}_{1}) = w_{0}(\tilde{K}_{2}) - w_{0}(\tilde{K}_{1}) = -2w_0(\tilde{K}_1).
$$
\end{lemma}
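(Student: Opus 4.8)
The plan is to exploit the deck transformation $\Phi(z)=1/z$ of the branched cover $B:\C^*\to\C$, which interchanges the two components $\tilde K_1,\tilde K_2$ when $B^{-1}(K)$ is disconnected, i.e.~$\Phi(\tilde K_1)=\tilde K_2$. First I would record the effect of $\Phi$ on winding and rotation numbers: since $\phi(z)=1/z$ is an orientation-preserving diffeomorphism of $\C\cup\{\infty\}$ fixing $0$ and $\infty$ but reversing the roles of ``small'' and ``large'' loops, a loop $\gamma\subset\C^*$ with $w_0(\gamma)=k$ is sent by $\phi$ to a loop with $w_0(\phi(\gamma))=-k$ (the orientation of the angular coordinate $\theta$ is reversed by $z\mapsto 1/z=e^{-\rho-i\theta}$ in coordinates $z=e^{\rho+i\theta}$). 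Applying this with $\gamma=\tilde K_1$ gives $w_0(\tilde K_2)=w_0(\phi(\tilde K_1))=-w_0(\tilde K_1)$, which yields both $w_0(\tilde K_2)-w_0(\tilde K_1)=-2w_0(\tilde K_1)$ and (trivially, since $\phi$ is a diffeomorphism) reduces the rotation-number statement to understanding how $r$ transforms.

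The second and main step is the identity $r(\tilde K_2)-r(\tilde K_1)=w_0(\tilde K_2)-w_0(\tilde K_1)$. I would prove the sharper local statement that for \emph{any} immersed loop $\gamma\subset\C^*$ one has $r(\phi(\gamma)) = r(\gamma) + 2w_0(\gamma)$ — wait, the sign needs care: since $\phi$ reverses the angular orientation, $\phi$ composed with a reflection is orientation preserving and fixes the winding; the cleanest route is to write $\phi = \iota\circ\psi$ where $\psi(z)=1/\bar z$ is inversion in the unit circle (a conformal, orientation-reversing map that is the identity on $|z|=1$ up to isotopy rel nothing) and $\iota(z)=\bar z$ is complex conjugation. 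Complex conjugation negates both $r$ and $w_0$; inversion $\psi$ in the unit circle, being isotopic through conformal maps to the identity on the annular Hill's region (it is orientation reversing, so I instead use that $\psi$ preserves $w_0$ up to sign and track $r$ via the Gauss map). Rather than this bookkeeping, the robust argument is: deform $\gamma$ by a regular homotopy inside $\C^*$ to a standard model — by Lemma~\ref{lem:top-loops}(c) (applied with the punctures $0,\infty$, equivalently the Whitney--Graustein theorem in the annulus) $\gamma$ is regularly homotopic in $\C^*$ to a circle of some winding number $k=w_0(\gamma)$ with $|r(\gamma)|-|k|$ small loops added; $r$, $w_0$, and $r\circ\phi$, $w_0\circ\phi$ all change the same way under such a homotopy (small interior loops added to $\gamma$ become small interior loops of $\phi(\gamma)$ near the other end). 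So it suffices to check the identity on these standard models, where an explicit computation of $\phi$ applied to a round circle $|z|=\rho$ traversed $k$ times with $\ell$ extra small loops gives $w_0=k$, $r=k+\ell$ (with appropriate sign conventions), and $\phi$ of it is the round circle $|z|=1/\rho$ traversed with opposite angular orientation, i.e.~$w_0(\phi\gamma)=-k$, $r(\phi\gamma)=-k-\ell$ — hence $r(\phi\gamma)-r(\gamma) = -2k-2\ell$... so I must double-check: we actually want $r(\tilde K_2)-r(\tilde K_1)=-2w_0(\tilde K_1)=w_0(\tilde K_2)-w_0(\tilde K_1)$, and the extra-loop contributions $\pm\ell$ should cancel. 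Indeed with consistent conventions the small loops contribute $+\ell$ to $r(\tilde K_1)$ and $+\ell$ to $r(\tilde K_2)$ (a small counterclockwise loop stays a small counterclockwise loop under the inversion near the \emph{other} puncture), while the ``main'' circle contributes $+k$ to $r(\tilde K_1)$ and $-k$ to $r(\tilde K_2)$; thus $r(\tilde K_2)-r(\tilde K_1) = -2k = -2w_0(\tilde K_1)$, matching $w_0(\tilde K_2)-w_0(\tilde K_1)$ as computed above.

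The third step is just to assemble: combine $w_0(\tilde K_2) = -w_0(\tilde K_1)$ with the standard-model computation of $r(\tilde K_2)-r(\tilde K_1)$ to get the chain of equalities as stated, using that $n(K)$ and hence the relevant quantities are unchanged if we interchange the labels $\tilde K_1\leftrightarrow\tilde K_2$.

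\textbf{Main obstacle.} The delicate point is nailing down the sign conventions for the rotation number under the orientation-behavior of the inversion $z\mapsto 1/z$ (which is orientation \emph{preserving} on $\C^*$ but reverses the angular coordinate), and confirming that the contributions of added interior loops to the two rotation numbers genuinely cancel in the difference. I expect this to be the only real content; everything else is the Whitney--Graustein reduction to standard curves, which is already invoked in Lemma~\ref{lem:top-loops}.
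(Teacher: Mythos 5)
Your proposal is correct and reaches the same two identities, but the route for the rotation-number part is genuinely different from the paper's. The paper simply parametrizes $\tilde{K}_2$ as $z_2(t)=1/z_1(t)$ and differentiates: $\dot z_2=-\dot z_1/z_1^2$, so the degree of the normalized velocity picks up $-2$ times the degree of $\arg z_1$, giving $r(\tilde{K}_2)=r(\tilde{K}_1)-2w_0(\tilde{K}_1)$ in one line; the winding-number claim $w_0(\tilde K_2)=-w_0(\tilde K_1)$ is read off the same parametrization. You instead reduce to standard models (a $k$-fold circle with small loops attached) via regular-homotopy invariance of all four quantities and check the identity there. This works: the reduction is legitimate because $\phi$ is a diffeomorphism of $\C^*$, so a regular homotopy of $\tilde K_1$ in $\C^*$ induces one of $\tilde K_2$, and your final bookkeeping (the $k$-fold circle contributes $+k$ and $-k$ to the two rotation numbers while small loops contribute $+\ell$ to both, since the holomorphic map $\phi$ preserves the orientation of small loops) is the correct resolution of the sign issue you flag. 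What the paper's computation buys is brevity and immunity to the convention worries that occupy most of your second paragraph; what your approach buys is a more geometric picture of where the $-2w_0$ comes from. Two small blemishes, neither fatal: $\phi(z)=1/z$ does not fix $0$ and $\infty$ but interchanges them (the relevant fact, which you do use correctly, is that it reverses the angular coordinate); and your standard model "$k$-fold circle with loops" needs a separate (easy) model when $w_0(\tilde K_1)=0$, a case the lemma still covers. Also, Lemma~\ref{lem:top-loops}(c) concerns $\C\setminus\{E,M\}$ modulo the collision moves and is not quite the right citation; the Whitney--Graustein classification in the annulus, which you also invoke, is the correct tool.
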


\begin{proof} 
%
Recall that $\tilde{K}_2=\phi(\tilde{K}_1)$ for $\phi(z)=1/z$. Thus a parametrization $z_1(t)$ of $\tilde{K}_1$ gives rise to a parametrization $z_2(t)=1/z_1(t)$ of $\tilde{K}_2$. This shows that $w_0(\wt K_1)=-w_0(\wt K_2)$. Moreover, the equation $\dot z_2(t) = -\dot z_1(t)/z_1(t)^2$ yields the relation $r(\tilde{K}_2)=r(\tilde{K}_1)-2w_0(\tilde{K}_1)$.  
\end{proof}

\begin{prop}\label{prop:JEM}
The quantities $n(K)$ and $\mathcal{J}_{E,M}(K)$ do not depend on the choice of $\tilde K$ and are invariant under two-center Stark-Zeeman homotopies. 
\end{prop}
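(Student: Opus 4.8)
The strategy is to handle the two independence claims — independence from the choice of component $\wt K$, and invariance under Stark-Zeeman homotopies — separately, reducing both to what happens under the inversion $\phi(z)=1/z$ interchanging the two sheets (governed by Lemma~\ref{lem:rot-win}) and to the local analysis of each move $(I_E)$, $(I_M)$, $(I_\infty)$, $(II^-)$, $(III)$.

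\textbf{Independence of the choice of component.} If $w_E(K)+w_M(K)$ is odd then by Proposition~\ref{prop:4.2}(b) the preimage $B^{-1}(K)$ is connected, so there is nothing to prove. If $w_E(K)+w_M(K)$ is even, $B^{-1}(K)=\wt K_1\amalg\wt K_2$ with $\wt K_2=\phi(\wt K_1)$. Lemma~\ref{lem:rot-win} gives $|w_0(\wt K_2)|=|w_0(\wt K_1)|$, so $n(K)$ is well defined. For $J_{E,M}$, write $N:=n(K)$. When $N=0$, Lemma~\ref{lem:rot-win} forces $r(\wt K_2)=r(\wt K_1)$, and since $\phi$ is an orientation-preserving diffeomorphism of $S^2$ fixing $0$ and $\infty$ one has $SJ^+(\wt K_1)=SJ^+(\wt K_2)$ by Proposition~\ref{prop: spherical j+}; combined with equality of rotation numbers this yields $J^+(\wt K_1)=J^+(\wt K_2)$, so $\mathcal J_{E,M}$ is well defined. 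When $N>0$, the same spherical invariance gives $J^+(\wt K_2)+r(\wt K_2)^2/2=J^+(\wt K_1)+r(\wt K_1)^2/2$, and Lemma~\ref{lem:rot-win} gives $r(\wt K_2)=r(\wt K_1)\mp 2N$ (sign depending on the sign of $w_0(\wt K_1)$), whence $J^+(\wt K_2)-J^+(\wt K_1)=\tfrac12\bigl(r(\wt K_1)^2-(r(\wt K_1)\mp 2N)^2\bigr)=\pm 2N\,r(\wt K_1)-2N^2\in 2N\Z$. Thus $J^+(\wt K_1)\equiv J^+(\wt K_2)\pmod{2N}$ and $\mathcal J_{E,M}$ is well defined.

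\textbf{Invariance under homotopies.} It suffices to check invariance under each of the elementary moves. For $(II^-)$ and $(III)$, none of $r$, $w_0$, or $J^+$ of $\wt K$ changes (the moves lift to the same type of move upstairs, away from the origin), so $n$ and $J_{E,M}$ are unchanged. For $(I_\infty)$: an exterior loop added to $K$ in $\C\setminus\{E,M\}$ has $w_E=w_M=0$, so it lifts (connectedly or in two pieces according to parity) to loops in $\C^*$; each lifted loop is either an exterior loop relative to $\wt K$, or possibly a loop winding around $0$. One shows $n(K)$ is unchanged (the winding number $w_0(\wt K)$ is determined by $K$ together with the sheet, via the formula relating it to the $w_E,w_M$ data of the arc being flipped). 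If the added loop lifts to a genuine exterior loop, $J^+(\wt K)$ is unchanged by Proposition~\ref{prop:J+}(b) (winding number of $\wt K$ around that exterior component is zero), so $J_{E,M}$ is unchanged; if it could lift to a loop encircling the origin, the resulting change in $J^+(\wt K)$ is $-2w_0(\wt K)=\mp 2N$, an element of $2N\Z$, hence killed by the mod $2N$ reduction when $N>0$ — and when $N=0$ this situation cannot arise. For $(I_E)$ and $(I_M)$: near $E=-1$ (resp.\ $M=+1$) the map $B$ looks like the Levi-Civita square map, so a cusp move of $K$ at $E$ lifts to a cusp move of $\wt K$ at the single preimage point of $E$ (the branch point), with the same change in $J^+$ and rotation number as in the one-center case. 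I would invoke the computation from \cite{cieliebak-frauenfelder-koert} (as in the proof of Proposition~\ref{prop: J_{0} inv}) that at such a move $J^+(\wt K)+w_E(\wt K)^2/2$ is invariant; but here the relevant quantity is $J^+(\wt K)$ itself together with how the lifted winding around $0$ changes. The key point is that under $(I_E)$ the winding number $w_0(\wt K)$ does change — by $\pm 1$ — and correspondingly $J^+(\wt K)$ changes, but the change in $J^+(\wt K)$ is exactly accounted for modulo $2n(K)$, using that $n$ changes by at most $1$ and recomputing the modulus. This is the delicate bookkeeping step.

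\textbf{Main obstacle.} The hard part is the analysis of the moves $(I_E)$, $(I_M)$, and the potential origin-encircling case of $(I_\infty)$: one must track simultaneously how $n(K)=|w_0(\wt K)|$ and $J^+(\wt K)\bmod 2n(K)$ change, where both the quantity and the modulus are moving. The clean way is to package everything into the spherical invariant on the target sphere and use its $\phi$-invariance together with Lemma~\ref{lem:rot-win}; most changes in $J^+(\wt K)$ under these moves are multiples of $2w_0(\wt K)$ (since the born/killed loop, or the reflected arc, encircles the origin with the appropriate winding number), and such multiples are precisely what the mod $2n(K)$ reduction discards. The case $n(K)=0$ must be treated by hand: there one shows these origin-winding phenomena do not occur, so $J^+(\wt K)$ is genuinely invariant, consistently with the definition.
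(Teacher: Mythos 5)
Your treatment of the component-independence (via the $SJ^+$-invariance under the M\"obius transformation $\phi(z)=1/z$ together with Lemma~\ref{lem:rot-win}) and of the moves $(II^-)$, $(III)$, $(I_\infty)$ follows the paper's argument. However, your handling of $(I_E)$ and $(I_M)$ contains a genuine error. You assert that a cusp move of $K$ at $E$ ``lifts to a cusp move of $\wt K$ at the single preimage point of $E$'' and that consequently $w_0(\wt K)$ changes by $\pm 1$, forcing a ``delicate bookkeeping'' in which both $J^+(\wt K)$ and the modulus $2n(K)$ move. This is backwards: the whole point of the Birkhoff regularization is that near $E=-1$ and $M=+1$ the map $B$ is a branched double cover modelled on the Levi-Civita square map, so the cusp of $K$ at $E$ becomes a \emph{smooth} crossing of the branch point $z=-1$ by $\wt K$ (exactly as in the invariance of $\JJ_E$ under $(I_E)$, where $\tilde K_E$ simply remains smooth). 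Under $(I_E)$ the lift $\wt K$ undergoes an ordinary regular homotopy crossing the unmarked point $-1\neq 0$, so neither $w_0(\wt K)$ nor $J^+(\wt K)$ changes at all --- which is why the paper dismisses $(I_E)$ and $(I_M)$ in one line. Note also that if $n(K)=|w_0(\wt K)|$ really did jump under a Stark-Zeeman homotopy, then $n$ would fail to be an invariant, contradicting the very statement you are proving; and your proposed repair (``recomputing the modulus'') is never actually carried out, so this step is both wrong in its premise and incomplete in its execution.

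Two smaller points. First, in the $(I_\infty)$ analysis, when $n(K)=0$ the origin-encircling lift \emph{can} occur (for instance, when $B^{-1}(K)$ is connected the exterior loop lifts to both an exterior loop and a loop in the component of $\C^*\setminus\wt K$ containing $0$); your conclusion survives only because the resulting change $-2w_0(\wt K)$ vanishes when $w_0(\wt K)=0$, not because ``this situation cannot arise.'' Second, the precise reason the lifted loop contributes $-2w_0(\wt K)$ is Proposition~\ref{prop:J+}(b) applied to the component of $\C^*\setminus\wt K$ whose closure contains $0$ (the preimage of the unbounded component of $\C\setminus K$ accumulates only at $0$ and $\infty$), and it is worth stating this explicitly rather than appealing to a ``formula relating it to the $w_E,w_M$ data of the arc being flipped.''
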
 

\begin{proof} 
Suppose that $B^{-1}(K)$ has two components $\tilde{K}_1,\tilde{K}_2$ (the proof in the case that $B^{-1}(K)$ is connected is similar but simpler and will be omitted).
Then by Lemma~\ref{lem:rot-win} we have $w_0(\tilde{K}_1)=-w_0(\tilde{K}_2)$, so $n(K)=|w_0(\tilde{K}_1)|=|w_0(\tilde{K}_2)|$ does not depend on the choice of a component. Moreover, $n(K)$ does not change under a Stark-Zeeman homotopy because $\tilde{K}_1,\tilde{K}_2$ never cross the origin. 

Since by Proposition~\ref{prop: spherical j+} the spherical $J^+$-invariant is preserved under M\"obius transformations, it is the same for $\tilde{K}_1$ and $\tilde{K}_2$, i.e.
$$
   J^{+}(\tilde{K}_{1}) + r(\tilde{K}_{1})^{2}/2 = J^{+}(\tilde{K}_{2}) + r(\tilde{K}_{2})^{2}/2.
$$
We rearrange this equation and invoke Lemma~\ref{lem:rot-win} twice to get
\begin{align}\label{eq: difference J+}
   J^{+}(\tilde{K}_{2})-J^{+}(\tilde{K}_{1}) &= \frac{r(\tilde{K}_{1})^{2}-r(\tilde{K}_{2})^{2}}{2} = \frac{\bigl(r(\tilde{K}_{1})-r(\tilde{K}_{2})\bigr)\bigl(r(\tilde{K}_{1})+r(\tilde{K}_{2})\bigr)}{2} \cr
   &= w_0(\tilde{K}_1)\,\bigl(r(\tilde{K}_{1}) + r(\tilde{K}_{2})\bigr) \cr
   &= 2w_0(\tilde{K}_1)\,\bigl(r(\tilde{K}_{1}) - w_0(\tilde{K}_{1})\bigr).
\end{align}
As the right hand side is an integer multiple of $2n(K)$, this shows that $J_{E,M}(K)$ does not depend on the choice of the component $\tilde K$. Moreover, it is clearly invariant under the moves $(I_E)$, $(I_M)$, $(II^-)$ and $(III)$ for $K$. A move $(I_\infty)$ for $K$ results in addition/removal to/from $\tilde K$ of an exterior loop, an interior loop in the component of $\C\setminus\wt K$ containing the origin, or both (if $B^{-1}(K)$ is connected). As an exterior loop does not change $J^+(\wt K)$ and an interior loop changes it by $-2w_0(\wt K)$, this proves invariance of $J_{E,M}(K)$ under Stark-Zeeman homotopies. 
\end{proof}

The following lemma shows that the parity of $n(K)$ is determined by that of $w_E(K)$ and $w_M(K)$. 

\begin{lemma}\label{lem:n}
If $w_E(K)+w_M(K)$ is odd, then $n(K)=0$. If $w_E(K)+w_M(K)$ is even, then $n(K)\equiv w_E(K)\equiv w_M(K)\mod 2$. 
\end{lemma}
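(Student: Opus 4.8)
The plan is to read off $w_0(\tilde K)$ from the winding numbers of $K$ about $E$ and $M$ using the explicit form of the Birkhoff map. Normalizing $E=-1$ and $M=+1$, the basic algebraic input is the pair of identities
$$
   B(z)-E=\frac{(z+1)^2}{2z},\qquad B(z)-M=\frac{(z-1)^2}{2z}.
$$

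First I would dispose of the case where $w_E(K)+w_M(K)$ is odd. By Proposition~\ref{prop:4.2}(b) the preimage $\tilde K=B^{-1}(K)$ is connected, so a lift of $K$ runs over the parameter interval $[0,2]$, traversing $K$ twice; since the lift does not close up after one traversal (again by Proposition~\ref{prop:4.2}(b)), and the two $B$-preimages of a point of $K$ are interchanged by $z\mapsto1/z$, uniqueness of path lifting gives $z(t+1)=1/z(t)$ for $t\in[0,1]$. As $\arg(1/z)=-\arg z$, the change of $\arg z$ over $[1,2]$ cancels the one over $[0,1]$, so $w_0(\tilde K)=0$ and hence $n(K)=0$. (Alternatively, $\tilde K$ is invariant under the sheet-interchanging involution $\phi(z)=1/z$, which reverses the winding number about the origin, as in the proof of Lemma~\ref{lem:rot-win}.)

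Now suppose $w_E(K)+w_M(K)$ is even. Then by Proposition~\ref{prop:4.2}(b) the chosen component $\tilde K$ covers $K$ exactly once, so it is represented by a loop $z\colon[0,1]\to\C^*$ with $z(0)=z(1)$. Since $K$ avoids $E$ and $M$, whose $B$-preimages are $\{-1\}$ and $\{+1\}$, the loop $\tilde K$ avoids $-1,+1$; it also avoids $0$ as it lies in $\C^*$. Taking the total change of argument around $\tilde K$ in the two displayed identities therefore yields
$$
   w_E(K)=2\,w_{-1}(\tilde K)-w_0(\tilde K),\qquad w_M(K)=2\,w_{+1}(\tilde K)-w_0(\tilde K),
$$
with $w_{-1}(\tilde K),w_{+1}(\tilde K)\in\Z$. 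Reducing modulo $2$ gives $w_E(K)\equiv w_M(K)\equiv w_0(\tilde K)\equiv n(K)\pmod 2$, which is the assertion.

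I do not expect a genuine difficulty here; the only points requiring care are the bookkeeping of whether $\tilde K$ covers $K$ once or twice — which is exactly what Proposition~\ref{prop:4.2}(b) controls — and the verification that $\tilde K$ stays clear of $\{-1,0,+1\}$ so that the changes of argument used above are honest integer multiples of $2\pi$.
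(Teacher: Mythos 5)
Your proof is correct, and it takes a genuinely different route from the paper's. The paper argues by intersection theory with a ray: it perturbs $K$ to be transverse to $(1,\infty)$, observes that $B$ maps both arcs $(0,1)$ and $(1,\infty)$ onto $(1,\infty)$ with opposite orientations, and counts signed intersection points of $B^{-1}(K)$ with $(0,\infty)$ --- in the connected case the two preimage points of each crossing cancel in pairs, giving $w_0=0$, and in the disconnected case each component inherits exactly one preimage point per crossing, giving $w_0(\tilde K_1)\equiv w_M(K)\bmod 2$. You instead handle the odd case by the sheet-interchanging involution $\phi(z)=1/z$ (the connected lift satisfies $z(t+1)=1/z(t)$, so the argument changes over the two halves cancel --- equivalently, $\phi$-invariance forces $w_0(\tilde K)=-w_0(\tilde K)$, in the spirit of Lemma~\ref{lem:rot-win}), and the even case by the factorizations $B(z)-E=(z+1)^2/2z$ and $B(z)-M=(z-1)^2/2z$, which yield the exact identities $w_E(K)=2w_{-1}(\tilde K)-w_0(\tilde K)$ and $w_M(K)=2w_{+1}(\tilde K)-w_0(\tilde K)$. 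Your approach buys strictly more than the lemma asks for: the displayed identities determine the winding numbers of $K$ exactly, not just their parities, and in particular make the congruences transparent; the paper's ray-counting argument is more elementary and avoids having to track continuous branches of the argument, but only delivers the parity statement. Both proofs rely on the same two inputs --- Proposition~\ref{prop:4.2}(b) to control whether the lift covers $K$ once or twice, and the fact that $\tilde K$ avoids $\{-1,0,+1\}$ --- and you flag both correctly.
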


\begin{proof}
Recall that we have normalized $E=-1$, $M=1$ and the Birkhoff map is given by $B(z)=(z+z^{-1})/2$. So $B$ maps the arcs $(1,\infty)$ and $(0,1)$ bijectively onto $(1,\infty)$, preserving the orientation for $(1,\infty)$ and reversing it for $(0,1)$ (where we always orient an arc $(a,b)$ from $a$ to $b$). We perturb $K\subset\C\setminus\{-1,1\}$ to make it transverse to the arc $(1,\infty)$. Then each intersection point $p$ of $K$ with $(1,\infty)$ corresponds to a pair $(p_+,p_-)$ consisting of an intersection point $p_+$ of $B^{-1}(K)$ with $(1,\infty)$ of the same sign, and an intersection point $p_-$ of $B^{-1}(K)$ with $(0,1)$ of opposite sign. Since the winding number of $B^{-1}(K)$ around the origin equals the signed count of its intersection points with $(0,\infty)$, this shows that $w_0\bigl(B^{-1}(K)\bigr)=0$ (and therefore $n(K)=0$) if $B^{-1}(K)$ is connected, i.e., if $w_E(K)+w_M(K)$ is odd. 

If $w_E(K)+w_M(K)$ is even, then $B^{-1}(K)$ consists of two components $\tilde{K}_1,\tilde{K}_2$. By the preceding discussion, each intersection point of $K$ with $(1,\infty)$ corresponds to an intersection point of $\tilde{K}_1$ with $(0,\infty)$ (possibly of different sign). So the winding numbers $w_M(K)$ of $K$ around $M=1$ and $w_0(\tilde{K}_1)$ of $\tilde{K}_1$ around $0$ have the same parity. 
\end{proof}

 \begin{rem}
The invariant $n(K)$ is uniquely determined by the free homotopy class of the (co-)tangent lift of $K$ to the Moser regularized energy hypersurface $\Sigma_c^M=\R P^{3} \# \R P^{3}$: As explained at the end of Subsection~\ref{Subsection: Top of Birkhoff reg}, a loop in the class $[(em)^n]$, $n\in\N_0$ lifts to two loops in the free homotopy classes $[\pm n]$ in the Birkhoff regularized hypersurface $\Sigma^B_c=S^1\times S^2$ and thus has $n(K)=n$, while a loop in the class $[e]$ or $[m]$ has its double cover lifting to a contractible loop in $S^1\times S^2$ and thus has $n(K)=0$. 
\end{rem}

\begin{ex}
Consider the two curves in Figure~\ref{Fig: Ke Kem}. Both curves $K_{E}$ and $K_{EM}$ have $J^{+}=2$ and winding numbers $w_{E}=w_{M}=0$. However, they are not Stark-Zeeman homotopic. To see this, note first that both curves are contractible in $\C\setminus\{E,M\}$, so the components of their preimages under the Birkhoff regularization map $B$ have winding number $0$ around the point $0$. Since the embedded arcs in $K_E$ connecting a self-intersection point have winding number $\pm 1$ around $E$ and $0$ around $M$, the self-intersection points disappear in $B^{-1}(K_E)$, hence $B^{-1}(K)$ is a union of two embedded loops and $\mathcal{J}_{E, M}(K_{E})=0$. By contrast, the embedded arcs in $K_{EM}$ connecting a self-intersection point have winding number $\pm 1$ around both $E$ and $M$, so the self-intersection points persist in $B^{-1}(K_{EM})$, hence each component of $B^{-1}(K_{EM})$ is diffeomorphic to $K_{EM}$ and $\mathcal{J}_{E, M}(K_{EM})=2$. 
\end{ex}

\begin{figure}
\center
\includegraphics[width=80mm]{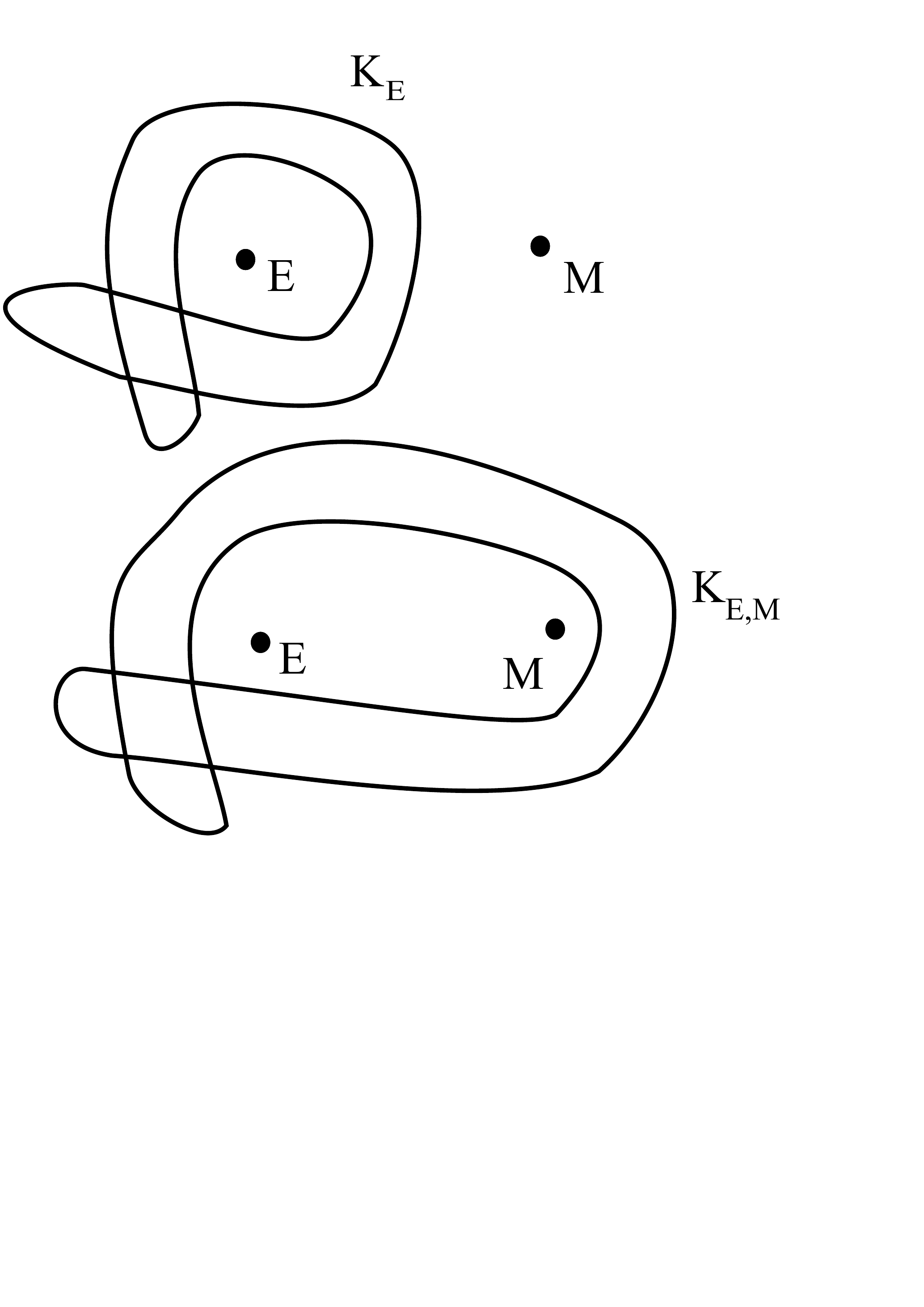}
\caption{Two loops that are not distinguishable by one-center invariants}
\label{Fig: Ke Kem}
\end{figure}




\begin{ex}\label{ex:Kn}
Generalizing Example~\ref{ex:non-conn-sum}, consider for $n\in\N$ the immersed loop $K^n\subset\C\setminus\{E,M\}$ winding $n$ times counterclockwise around $E$ and $M$ with $n-1$ self-intersections as shown in~\cite[Figure 14]{cieliebak-frauenfelder-koert}. 
Its $J^+$-invariant has been computed in~\cite{cieliebak-frauenfelder-koert} to be $J^+(K^n)=-n(n-1)$. 
Suppose now that $n=2m$ is even. Then one component $\tilde K^n$ of the preimage of $K^n$ under the Levi-Civita map at $0$ (or equivalently at $E$ or $M$) is diffeomorphic to $K^m$, so it has $w_{M_1}(\tilde K^n)=w_{M_2}(\tilde K^n)=m$ and $J^+(\tilde K^n)=J^+(K^m)=-m(m-1)$. Hence we can read off the invariants
\begin{align*}
   \JJ_0(K^n) &= J^+(K^n)+n^2/2+n^2/2 = -n(n-1)+n^2 = n, \cr
   \JJ_E(K^n) &= J^+(\tilde K^n) + m^2/2 + m^2/2 = -m(m-1)+m^2 = m, \cr
   \JJ_M(K^n) &= m, \cr
   \JJ_{E,M}(K^n) &= J^+(K^n) = -n(n-1).
\end{align*}
Note the the four invariants sum up to
$$
   (J_0+J_E+J_M+J_{E,M})(K^n) = n+n-n(n-1) = n(3-n).
$$
\end{ex}

The following lemma describes the remainders mod $2$ of the four $J^+$-like invariants. 

\begin{lemma}\label{lem:parities}
The invariant $\JJ_{E,M}(K)$ is always an even integer mod $2n(K)$. The remainders mod $2$ of the other three invariants $\JJ_0,\JJ_E,\JJ_M$ depend on the free homotopy class $[K]$ modulo the moves $(I_E)$ and $(I_M)$ and are given in Table~\ref{tab:parities}. The invariant $n(K)$ has value $0$ for $[K]=e$ and $[K]=m$, and value $n$ for $[K]=(em)^n$.
\end{lemma}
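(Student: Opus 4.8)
The strategy is to reduce everything to the standard curves $K_j$ in each free homotopy class, then track the three parities under the moves $(I_E)$, $(I_M)$, and under regular homotopy. First I would handle the claim about $\JJ_{E,M}$: by Definition~\ref{defi J_{e, m}}, when $n(K)=0$ we need $J^+(\tilde K)$ even, and when $n(K)>0$ we need $J^+(\tilde K)$ even mod $2n(K)$, which since $2n(K)$ is even is the same as $J^+(\tilde K)$ being even mod $2$. So in both cases it suffices to show $J^+(\tilde K)$ is even, where $\tilde K\subset\C^*$ is a component of $B^{-1}(K)$. But $\tilde K$ is a loop in $\C^*$ that does not wind around $E=-1$ or $M=+1$ after projecting (those are the branch values, not punctures of $\C^*$), hence $\tilde K$ is simply a generic immersed loop in $\C$, or rather in $\C\setminus\{0\}$; by Proposition~\ref{prop:J+}(c) (or rather its proof via standard curves), $J^+$ of any immersed loop in the plane is even — indeed $J^+(K_0)=0$ and $J^+(K_j)=2-2|j|$ are all even, and $J^+$ changes by $\pm2$ at every direct self-tangency, so $J^+$ is always an even integer. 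This settles the $\JJ_{E,M}$ part and also the trivial observation that all of $J^+(\tilde K_E)$, $J^+(\tilde K_M)$, $J^+(K)$ are even, which already pins down parts of the other entries.

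Next I would address the parities of $\JJ_0,\JJ_E,\JJ_M$. Since $J^+$ is always even, we have $\JJ_0(K)\equiv w_E(K)^2/2+w_M(K)^2/2\pmod 2$, and similarly $\JJ_E(K)\equiv w_{M_1}(\tilde K_E)^2/2+w_{M_2}(\tilde K_E)^2/2\pmod 2$ and analogously for $\JJ_M$. The point of invoking the free homotopy class modulo $(I_E),(I_M)$ is that these moves flip the signs of $w_E$ resp.\ $w_M$ but leave their absolute values — and hence their squares — unchanged, so the parities above are indeed well-defined on classes in $\Z_2*\Z_2$. For the class $[(em)^n]$ one has $w_E(K)=w_M(K)=\pm n$, so $\JJ_0\equiv n^2\equiv n\pmod 2$; for $[e]$ one has $w_E=\pm1,w_M=0$, giving $\JJ_0\equiv 1$; for $[m]$, $\JJ_0\equiv 1$. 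For $\JJ_E$ in the class $[(em)^n]$: by Lemma~\ref{lem:n} and the analysis in Lemma~\ref{lem:JE}, if $n$ is odd then $w_E(K)$ odd gives $w_{M_1}(\tilde K_E)=w_{M_2}(\tilde K_E)=w_M(K)=\pm n$, so $\JJ_E\equiv n^2\equiv n\equiv 1$; if $n$ is even, one uses that $K$ in this class can be taken of the form handled in Lemma~\ref{lem:JE} (a connected sum near $E$ and $M$, e.g.\ the standard model $K^n$ of Example~\ref{ex:Kn}), so $w_{M_1}(\tilde K_E)=w_M(K)=\pm n$, $w_{M_2}(\tilde K_E)=0$, giving $\JJ_E\equiv n^2/2\equiv 0$ since $n\equiv 0\pmod 2$ — wait, $n^2/2$ with $n$ even is $2(n/2)^2$, even, consistent with $n\equiv 0$. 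For $[e]$: $w_E(K)=\pm1$ odd, so $\tilde K_E$ connected and $w_{M_i}(\tilde K_E)=w_M(K)=0$, hence $\JJ_E=J^+(\tilde K_E)\equiv 0$. For $[m]$: $w_E(K)=0$ even; $\tilde K_E$ has two components, one near each preimage of $M$, so $w_{M_1}(\tilde K_E)=w_{M_2}(\tilde K_E)=\pm1$ (a component of the preimage winds once around one $M_i$), giving $\JJ_E\equiv 1/2+1/2=1$. Symmetrically for $\JJ_M$ one swaps the roles of $e$ and $m$. Finally the statement about $n(K)$ is just a restatement of Lemma~\ref{lem:n} together with the remark identifying $n(K)$ with the free homotopy class on $\Sigma_c^M$.

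The main subtlety — and the one place I would be careful — is the well-definedness of the parities on free homotopy classes: one must check that within a fixed class $[(em)^n]$ (or $[e]$ or $[m]$) the quantities $w_{M_i}(\tilde K_E)^2$ are genuinely determined, not just $w_M(K)$. For odd $n$ this is automatic since $w_{M_1}=w_{M_2}=w_M(K)$ and $|w_M(K)|$ is the class invariant. For even $n$ the delicate point is that the partition of $w_M(K)$ between $w_{M_1}(\tilde K_E)$ and $w_{M_2}(\tilde K_E)$ could a priori differ — e.g.\ $(n,0)$ versus $(n/2,n/2)$ would give the same $\JJ_E$ only because $n^2/2 = 2\cdot(n/2)^2 + 2\cdot(n/2)^2$... actually $(n/2)^2+(n/2)^2 = n^2/2$, so these agree, but a partition like $(n-1,1)$ gives $(n-1)^2/2+1/2 = (n^2-2n+2)/2 = n^2/2 - n + 1$, which differs from $n^2/2$ by $n-1$, odd when $n$ is even! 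So one genuinely needs that only balanced or fully-concentrated partitions occur, which follows because the two preimages $M_1,M_2$ are interchanged by the deck transformation $z\mapsto -z$ of the Levi-Civita map, hence $w_{M_1}(\tilde K_E)+w_{M_2}(\tilde K_E)=w_M(K)$ and the other component $\tilde K_E'$ has the winding numbers swapped; but since $w_E(K)$ is even both components are legitimate choices and give the \emph{same} $\JJ_E$ by the already-proven well-definedness, forcing $w_{M_1}(\tilde K_E)^2+w_{M_2}(\tilde K_E)^2 = w_{M_2}(\tilde K_E)^2 + w_{M_1}(\tilde K_E)^2$ — trivially true, so $\JJ_E$ is well-defined regardless, and its parity is then computed on any convenient representative such as $K^n$. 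I would make this argument explicit and then the table fills in by direct computation on standard models.
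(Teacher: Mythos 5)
There is a genuine gap here, and it produces a wrong table entry. Your pointwise reduction $\JJ_E(K)\equiv w_{M_1}(\wt K_E)^2/2+w_{M_2}(\wt K_E)^2/2 \pmod 2$ is fine (since $J^+$ of any generic immersion is even), but it shifts the whole burden onto knowing the partition $\bigl(w_{M_1}(\wt K_E),w_{M_2}(\wt K_E)\bigr)$ for \emph{every} representative of a given class, and this partition is not determined by the class $[(em)^n]$ --- as you yourself observe, partitions of type $(n,0)$ and $(n/2,n/2)$ give different parities when $n/2$ is odd. Your resolution via the deck transformation only shows that swapping the two components of $L_E^{-1}(K)$ for a \emph{fixed} $K$ gives the same answer; it says nothing about two different curves in the same class, so well-definedness is not established. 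Concretely, your computation for even $n$ is wrong: the model $K^n$ of Example~\ref{ex:Kn} is \emph{not} a connected sum of loops near $E$ and near $M$ (that is exactly the point of Example~\ref{ex:non-conn-sum}, and such a connected sum would represent $e^nm^n$, which reduces to the trivial class modulo the moves rather than to $[(em)^n]$). Its lift has the balanced partition $(n/2,n/2)$, giving $\JJ_E(K^n)=n/2$, which is odd when $n\equiv 2\pmod 4$ --- the table entry $1$ that your partition $(n,0)$, yielding $\JJ_E\equiv 0$ for all even $n$, fails to reproduce.

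The paper avoids the partition question entirely: it checks that each of $\JJ_0,\JJ_E,\JJ_M$ changes by a multiple of $2$ under a $(II^+)$ move and under addition of a small loop (via Proposition~\ref{prop:J+}(b) applied both downstairs and to the lifts), and is exactly invariant under $(I_E),(I_M),(II^-),(III),(I_\infty)$; hence the residues mod $2$ are constant on each free homotopy class modulo the moves, and one then evaluates on the single representatives $e$, $m$ and $K^n$ using Example~\ref{ex:Kn}. To salvage your route you would have to prove directly that $w_{M_1}^2/2+w_{M_2}^2/2\bmod 2$ is constant on each class, which essentially amounts to redoing that move-by-move bookkeeping. Your treatment of $\JJ_{E,M}$ (evenness of $J^+$) and of $n(K)$ (quoting Lemma~\ref{lem:n}) is correct, and your $\JJ_0$ row survives because the parity of $w_E^2/2+w_M^2/2$ depends only on the parities of $w_E,w_M$, which are class invariants.
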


\begin{table}[h!]
  \begin{center}
    \caption{Values of the invariants mod $2$}
\medskip
    \label{tab:parities}
    \begin{tabular}{l|c|c|c} 
      Class $[K]$ & $J_0$ & $J_E$ & $J_M$ \\
      \hline
      $e$ & $1/2$ & $0$ & $1/2$ \\
      $m$ & $1/2$ & $1/2$ & $0$ \\
      $(em)^n$, $n\equiv 0$ mod $4$ & $0$ & $0$ & $0$ \\
      $(em)^n$, $n\equiv 2$ mod $4$ & $0$ & $1$ & $1$ \\
      $(em)^n$, $n\equiv 1$ mod $2$ & $1$ & $1$ & $1$ \\
    \end{tabular}
  \end{center}
\end{table}

Note that the invariants $\JJ_0,\JJ_E,\JJ_M$ detect the free homotopy classes $e$ and $m$, and for the classes $(em)^n$ they detect the parity of $n$ mod $2$ and satisfy the relation
\begin{equation}\label{eq:parities}
   \JJ_E\equiv\JJ_M\equiv n/2 \text{ mod }2 \quad \text{if } n \text{ is even.}
\end{equation}

\begin{proof}
The invariant $\JJ_{E,M}$ takes values in $2\Z/2n\Z$ because $J^+$ takes values in $2\Z$. 
For the other three invariants $\JJ_0,\JJ_E,\JJ_M$, note first that they all change by multiples of $2$ under a $(II_+)$ move and under addition of small loops, so their parities ($=$ remainders mod $2$) remain unchanged under arbitrary free homotopies as well as the moves $(I_E)$ and $(I_M)$. Therefore, is suffices to compute the parities for some representatives of the classes in Lemma~\ref{lem:top-loops}(b). We represent the classes $e$, $m$ and $1$ by small circles around $E$, $M$ and $0$, respectively, and the class $(em)^n$ for $n\in\N$ by the loop $K^n$ in~\cite[Figure 14]{cieliebak-frauenfelder-koert} winding $n$ times around both $E$ and $M$. On these loops one easily reads off the parities of the invariants $\JJ_0,\JJ_E,\JJ_M$ from their definitions. 
\end{proof}

\subsection{Relations among the four invariants}

In the preceding subsections we have defined four invariants: $\mathcal{J}_{0}$ for the non-regularized system, $\mathcal{J}_{E}$ and $\mathcal{J}_{M}$ for the partially regularized systems, and the pair $(\mathcal{J}_{E, M}, n)$ for the Birkhoff-regularized system. In this subsection we will analyze relations between these invariants. 
Crucial ingredients are Propositions 6 and 7 from~\cite{cieliebak-frauenfelder-koert} as well as the following construction. 

{\bf Interior connected sum. }
Let $K_1,K_2\subset\C\setminus\{0\}$ be disjoint generic immersed oriented loops meeting the following
requirements:
\begin{description}
 \item[(i)] $0$ and $K_1$ lie in the unbounded component of $\mathbb{C}\setminus K_2$;
 \item[(ii)] $K_2$ lies in the component $C$ of $\C\setminus K_1$ containing $0$.
\end{description}
See Figure \ref{fig:int-conn-sum}. 
\begin{figure}
\center
\includegraphics[width=100mm]{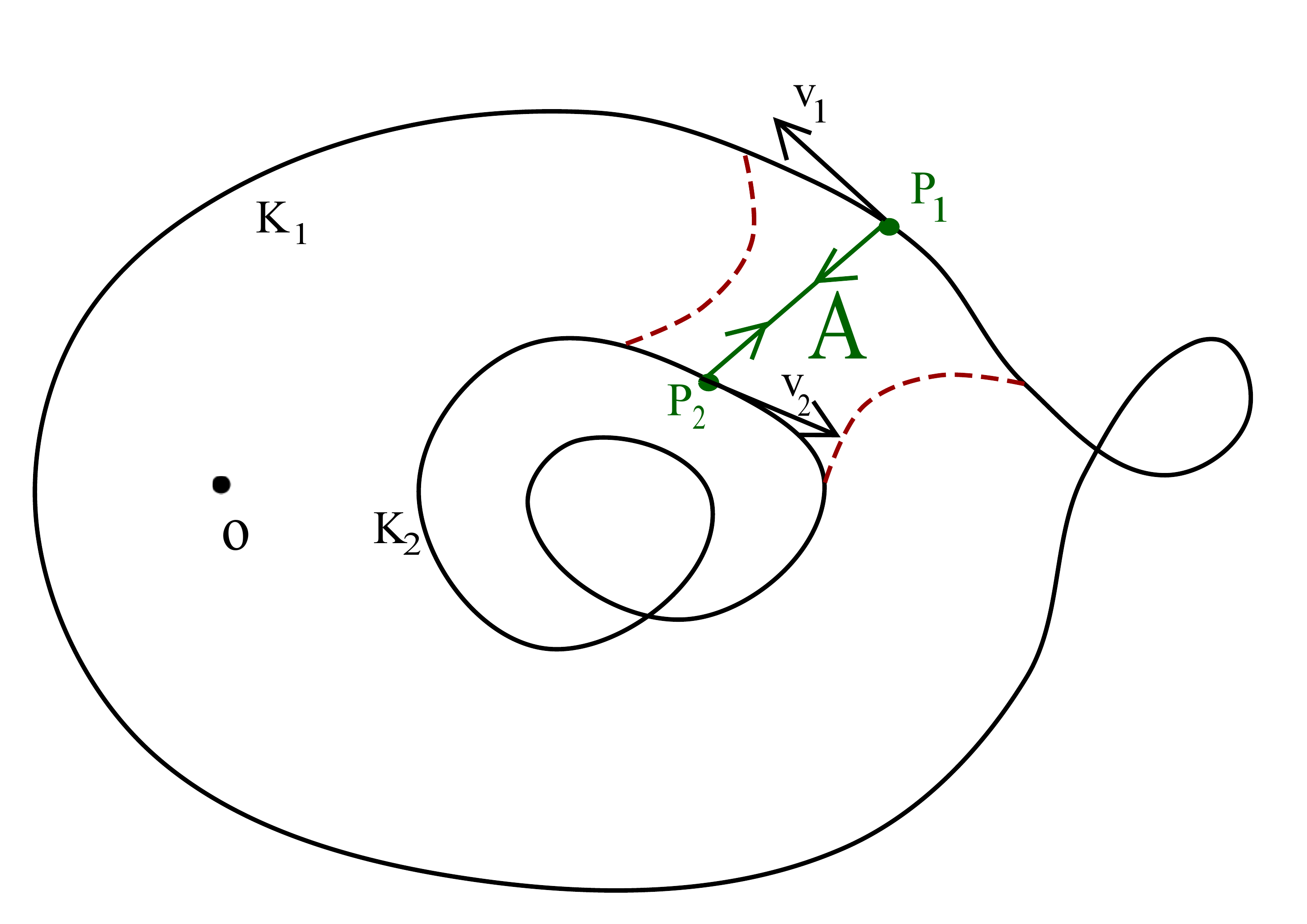}
\caption{Interior connected sum}
\label{fig:int-conn-sum}
\end{figure}
Suppose there exists an embedded arc $A$ connecting two non-double points $p_1\in K_1$ and $p_2\in K_2$ such that $A\setminus\{p_1,p_2\}\subset C\setminus K_2$ and the pairs $(v_1,n_1)$ and $(v_2,n_2)$ are positive bases, where $v_i$ is the velocity vector of $K_i$ at $p_i$ and $n_i$ a vector pointing into the interior of $A$ at its endpoint $p_i$. Then the {\em interior connected sum} $K_1\#_iK_2$ is defined by connecting $K_1,K_2$ along {two} parallel copies of $A$ and smoothing the corners. The immersion $K_1\#_iK_2$ will in general depend on the choice of the arc $A$. Moreover, for given orientations of $K_1,K_2$ such an arc need not exist. However, such an arc will always exist after pulling an interior arc of $K_1$ and an exterior arc of $K_2$ over themselves through inverse self-tangencies, which does not affect their $J^+$-invariants and winding/rotation numbers. Note that $K_1\#_iK_2$ inherits an orientation from $K_1,K_2$ and its rotation number satisfies
\begin{equation}\label{eq:rot-int-sum}
   r(K_1\#_iK_2) = r(K_1)+r(K_2)+1.
\end{equation}
If the pairs $(v_1,n_1)$ and $(v_2,n_2)$ were negative bases we would get $-1$ instead of $+1$ in this formula. Note that by hypothesis (ii) the inversion $\phi(z)=1/z$ sends $K_2$ to the unbounded component of $\C\setminus\phi(K_1)$. Moreover, from hypothesis (i) we deduce that $\phi(K_1)$ lies in the unbounded component of $\C\setminus\phi(K_2)$. Therefore, $\phi(K_1\#_iK_2)$ is the usual connnected sum 
\begin{equation}\label{eq:int-ext-conn-sum}
   \phi(K_1\#_iK_2)=\phi(K_{1})\# \phi(K_{2}).
\end{equation}
Observe that in the special case where $C$ is the unbounded component of $\C\setminus \ K_1$ the interior connected sum is the usual connected sum.

\begin{cor}\label{cor:int-sum}
For the interior connected sum $K=K_1\#_iK_2$ we have
$$
   J^+(K) = J^+(K_1) + J^+(K_2) - 2w_0(K_1)\,\bigl(r(K_2) + 1\bigr).
$$
In particular, $J^+(K)\equiv J^+(K_1)+J^+(K_2)\mod 2|w_0(K_1)|$. 
\end{cor}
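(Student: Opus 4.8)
The plan is to reduce the interior connected sum to an ordinary connected sum via the inversion $\phi(z)=1/z$ and then apply the additivity of the spherical $J^+$-invariant. First I would use equation~\eqref{eq:int-ext-conn-sum}, which says that $\phi(K_1\#_iK_2)=\phi(K_1)\#\phi(K_2)$ is an ordinary connected sum. Since $SJ^+$ is a $J^+$-type invariant on the sphere that is additive under connected sum (it inherits additivity from $J^+$, and by Proposition~\ref{prop: spherical j+} it is invariant under the Möbius transformation $\phi$), I get
$$
   SJ^+(K_1\#_iK_2) = SJ^+\bigl(\phi(K_1\#_iK_2)\bigr) = SJ^+\bigl(\phi(K_1)\bigr) + SJ^+\bigl(\phi(K_2)\bigr) = SJ^+(K_1) + SJ^+(K_2).
$$
Unwinding the definition $SJ^+(L)=J^+(L)+r(L)^2/2$ turns this into
$$
   J^+(K) + \frac{r(K)^2}{2} = J^+(K_1) + \frac{r(K_1)^2}{2} + J^+(K_2) + \frac{r(K_2)^2}{2},
$$
so that $J^+(K) = J^+(K_1) + J^+(K_2) + \tfrac12\bigl(r(K_1)^2 + r(K_2)^2 - r(K)^2\bigr)$.

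Next I would substitute the rotation number formula~\eqref{eq:rot-int-sum}, namely $r(K) = r(K_1)+r(K_2)+1$, and expand the bracket. Writing $a=r(K_1)$, $b=r(K_2)$, one computes $a^2+b^2-(a+b+1)^2 = -2ab-2a-2b-1$, which does not by itself produce the factor $w_0(K_1)$. The point where the winding number enters is the relation between $r(K_1)$ and $w_0(K_1)$ forced by hypothesis~\textbf{(i)}: since $0$ and $K_1$ lie in the unbounded component of $\C\setminus K_2$, the loop $K_2$ is ``small'' and does not enclose the origin, whereas $K_1$ surrounds $0$ with winding number $w_0(K_1)$. I would use the standard fact that for a generic immersed loop $K_1$ enclosing $0$, the rotation number and winding number are linked by a parity/consistency relation, and more precisely that the computation of $J^+(\phi(K_1))$ and $r(\phi(K_1))$ via Lemma~\ref{lem:rot-win}-type bookkeeping (or directly via Proposition~\ref{prop:J+}(b) applied to the exterior loops that appear) brings in $w_0(K_1)$. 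Concretely, I expect the cleanest route is to observe that adding $K_2$ as an interior connected summand to $K_1$ is, after applying $\phi$, adding $\phi(K_2)$ as an \emph{exterior} connected summand to $\phi(K_1)$; by Proposition~\ref{prop:J+}(b) the change in $J^+$ from attaching a loop in a component $C$ to an arc is $-2w(K,C)$, and the component in question is the one seen from $0$, contributing the factor $w_0(K_1)$.

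I would then assemble these pieces: starting from $J^+(K_1\#_iK_2)=J^+(K_1)+\bigl(\text{contribution of }K_2\text{ as exterior summand after }\phi\bigr)$, tracking the rotation-number shift through $SJ^+$ additivity, and collecting terms to arrive at
$$
   J^+(K) = J^+(K_1) + J^+(K_2) - 2w_0(K_1)\bigl(r(K_2)+1\bigr).
$$
The final ``in particular'' clause is then immediate: the correction term $-2w_0(K_1)(r(K_2)+1)$ is a multiple of $2w_0(K_1)$, hence of $2|w_0(K_1)|$, so $J^+(K)\equiv J^+(K_1)+J^+(K_2)\bmod 2|w_0(K_1)|$.

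The main obstacle I anticipate is carefully identifying \emph{which} winding/rotation numbers appear and with which sign — in particular reconciling the $+1$ from~\eqref{eq:rot-int-sum} (the orientation convention on the arc $A$ making $(v_i,n_i)$ positive bases) with the sign conventions in Proposition~\ref{prop:J+}(b) and in the $SJ^+$ additivity, so that $r(K_2)+1$ (rather than $r(K_2)$ or $r(K_1)$) emerges as the correct multiplier of $w_0(K_1)$. I would double-check this against Example~\ref{ex:Kn} and the curve of Example~\ref{ex:non-conn-sum}, where $K^n$ can be viewed as an iterated interior connected sum and the invariants are already known, to make sure the signs and the choice of $w_0(K_1)$ versus $w_0(K_2)$ are right.
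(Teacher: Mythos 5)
There is a genuine gap at the central step of your argument: $SJ^+$ is \emph{not} additive under connected sum. Additivity of $J^+$ does not pass to $SJ^+(L)=J^+(L)+r(L)^2/2$, because the rotation number itself is not additive ($r(L_1\# L_2)=r(L_1)+r(L_2)\pm 1$) and, even if it were, $r^2/2$ would not be. Your own computation flags the problem: the resulting ``correction'' $\tfrac12\bigl(a^2+b^2-(a+b+1)^2\bigr)=-ab-a-b-\tfrac12$ is not even an integer, whereas $J^+$ takes values in $2\Z$. The patch you then sketch does not close the gap: there is no ``parity/consistency relation'' between $r(K_1)$ and $w_0(K_1)$ forced by hypothesis (i) (these two numbers are independent), and Proposition~\ref{prop:J+}(b) governs the attachment of a single small loop, not of a general immersed summand $K_2$; to invoke it you would first have to reduce $K_2$ to a union of roughly $r(K_2)\pm1$ small loops by a regular homotopy supported in $C$ and control the direct self-tangencies occurring along the way, which you do not do.

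The correct use of the spherical invariant --- and this is the paper's route --- is not additivity but invariance under $\phi$, applied to each curve \emph{separately}. Combining $SJ^+(L)=SJ^+(\phi(L))$ with Lemma~\ref{lem:rot-win} (i.e.\ $r(\phi(L))=r(L)-2w_0(L)$ and $w_0(\phi(L))=-w_0(L)$) yields the identity~\eqref{eq: difference J+}, namely $J^+(\phi(L))-J^+(L)=2w_0(L)\bigl(r(L)-w_0(L)\bigr)$. One applies this to each of $K$, $K_1$, $K_2$, uses additivity of the \emph{ordinary} $J^+$ for the ordinary connected sum $\phi(K)=\phi(K_1)\#\phi(K_2)$ from~\eqref{eq:int-ext-conn-sum}, notes that $w_0(K_2)=0$ and hence $w_0(K)=w_0(K_1)$ (this is where hypothesis (i) actually enters), and finishes with~\eqref{eq:rot-int-sum}. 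You gesture at this with ``Lemma~\ref{lem:rot-win}-type bookkeeping,'' but as written your argument rests on the false additivity claim and does not reach the stated formula.
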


\begin{proof}
Since by hypothesis (i) the point $0$ lies in the unbounded component of $\C \setminus K_2$ it follows that $w_0(K_2)=0$, and therefore $w_0(K)=w_0(K_1)$. By \eqref{eq:int-ext-conn-sum} we have $\phi(K)=\phi(K_1)\#\phi(K_2)$. Replacing $\tilde{K}_1,\tilde{K}_2$ by $K,\phi(K)$ in the identity~\eqref{eq: difference J+} from the proof of Proposition~\ref{prop:JEM} we get
$$   
   J^+\bigl(\phi(K)\bigr) - J^+(K) = 2w_0(K)\,\bigl(r(K) - w_0(K)\bigr).
$$
Using this identity for $K,K_1,K_2$, additivity of $J^+$ under connected sum yields
\begin{align*}
   J^+(K) &= J^+\bigl(\phi(K)\bigr) - 2w_0(K)\,\bigl(r(K) - w_0(K)\bigr) \cr
   &= J^+\bigl(\phi(K_1)\bigr) + J^+\bigl(\phi(K_2)\bigr) - 2w_0(K)\,\bigl(r(K) - w_0(K)\bigr) \cr
   &= J^+(K_1) + 2w_0(K_1)\,\bigl(r(K_1) - w_0(K_1)\bigr) \cr
   &\ \ \ + J^+(K_2) + 2w_0(K_2)\,\bigl(r(K_2) - w_0(K_2)\bigr) - 2w_0(K)\,\bigl(r(K) - w_0(K)\bigr) \cr
   &= J^+(K_1) + J^+(K_2) + 2w_0(K_1)\,\bigl(r(K_1) - w_0(K_1) - r(K) + w_0(K_1)\bigr) \cr
   &= J^+(K_1) + J^+(K_2) - 2w_0(K_1)\,\bigl(r(K_2) + 1\bigr),
\end{align*}
where in the last line we have used~\eqref{eq:rot-int-sum}. 
\end{proof}

{\bf The basic lemma. }
We will also need the following refinement of~\cite[Proposition 7]{cieliebak-frauenfelder-koert}. Let us mention that the proof of~\cite[Proposition 7]{cieliebak-frauenfelder-koert} contained a small gap which we fill in the proof below. 
For a generic immersed loop $K\subset\C^*$ with even winding number $w_0(K)$ we denote by $\wt K$ one component of the preimage of $K$ under the Levi-Civita map $L(z)=z^2$.

\begin{lemma}\label{lem:basic}
On generic immersed loops $K\subset\C^*$, the quadruple of invariants $\bigl(J^+(K),J^+(\wt K),w_0(K),r(K)\bigr)$ attains all values in $2\Z\times 2\Z\times 2\Z\times\Z$. In the case $w_0(K)\neq 0$ we can moreover choose $K$ such that $L^{-1}(K)$ can be deformed to two disjoint curves contained in the left/right half-planes by a regular homotopy in $\C$ undergoing only inverse self-tangencies. 
\end{lemma}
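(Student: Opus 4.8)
The plan is to realize each quadruple by an explicit construction, based on Proposition~\ref{prop:J+}(b)--(c), the behaviour of $J^+$ under addition of loops (also~\cite[Proposition~6]{cieliebak-frauenfelder-koert}), Corollary~\ref{cor:int-sum}, and the Levi--Civita lift computation of Example~\ref{ex:Kn}. Three of the four numbers are under easy control: given $(a,c,d)\in 2\Z\times2\Z\times\Z$, Proposition~\ref{prop:J+}(c) yields a generic immersed loop in $\C^*$ with $J^+=a$ and $w_0=c$, after which adding exterior loops adjusts the rotation number to $d$ without changing $J^+$ or $w_0$. For such a loop $K$ the two remaining invariants of $\wt K$ are forced: $w_0(\wt K)=c/2$, and differentiating $\wt K^2=K$ gives $r(\wt K)=r(K)-w_0(K)/2$. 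So everything reduces to the following point: holding $\bigl(J^+(K),w_0(K),r(K)\bigr)=(a,c,d)$ fixed, one must be able to move $J^+(\wt K)$ through all even integers.

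For this I attach to $K$ a fixed local ``handle'' $G$: a loop which near a point $q$ off the negative real axis looks like a small positive circle nested inside a large negative circle, the two joined into a single immersed loop. Its effect on the four invariants is a fixed vector with vanishing $w_0$-component, which we absorb by re-choosing $K$ above. The point of $G$ is that it creates a bounded complementary region $\Omega$ of $K$ with $w(K,\Omega)=0$, while the component $\wt\Omega$ of $L^{-1}(\Omega)$ bordering $\wt K$ has $w(\wt K,\wt\Omega)=+1$; indeed over the single point $q$ there lie the two distinct points $\pm\wt q$, so the nested pair of circles at $q$ pulls back to a small positive circle of $\wt K$ near $\wt q$ together with a large negative circle of $\wt K$ near $-\wt q$, which are no longer nested. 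Now insert a small loop into $\Omega$ along an arc of $K$ on $\partial\Omega$: by Proposition~\ref{prop:J+}(b) this leaves $J^+(K)$ unchanged (the change is $-2w(K,\Omega)=0$) and $w_0(K)$ unchanged, changes $r(K)$ by $\pm1$, and upstairs changes $J^+(\wt K)$ by $-2w(\wt K,\wt\Omega)=-2$; pairing it with a small loop inserted into the unbounded region (which lifts into the unbounded region of $\wt K$ and hence affects no $J^+$) of the opposite rotation sense restores $r(K)$ and $r(\wt K)$. Each such pair of moves decreases $J^+(\wt K)$ by $2$ and fixes the other three invariants, and the mirror handle increases it by $2$ instead; iterating, $J^+(\wt K)$ reaches any prescribed even value, which proves the first assertion. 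I expect the bulk of the work to be exactly this step: the careful region- and sign-bookkeeping under the branched cover $L$, i.e.\ checking that the lifts of the inserted loops land in the complementary regions with the winding numbers claimed.

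For the last assertion, assume $c\neq 0$ and build $K$ more specifically as $K=\bigl(K^c\#_iP\bigr)\#G\#(\text{small loops})$, where $K^c\subset\C^*$ is the one-point analogue of the loop of~\cite[Figure~14]{cieliebak-frauenfelder-koert} winding $c$ times around the origin with $c-1$ self-intersections, $P$ is a loop localized in the origin-component of $\C\setminus K^c$, and the invariants are tuned as above. Since $w_0(K^c)$ is even, $L^{-1}(K)$ automatically consists of two disjoint loops; by Example~\ref{ex:Kn} the component $\wt{K^c}$ of $L^{-1}(K^c)$ is diffeomorphic to $K^{c/2}$, and $\wt K$ is obtained from it by an interior connected sum with $\wt P$ and ordinary connected sums with the (localized) lifts of $G$ and the small loops. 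All pieces of $\wt K$ other than $\wt{K^c}\cong K^{c/2}$ may be placed in the right half-plane from the start, and the standard model $K^{c/2}$ can be pulled across the origin into the right half-plane by a regular homotopy in $\C$ undergoing only inverse self-tangencies, dragging the attached pieces along; applying the deck transformation $\sigma$ carries $\sigma\wt K$ simultaneously into the left half-plane. (One may also argue in the $\phi$-picture, where by~\eqref{eq:int-ext-conn-sum} the interior connected sum becomes an ordinary one.) The remaining point to check, besides the bookkeeping above, is the lift identity $\wt{K^c}\cong K^{c/2}$ and the compatibility of $L$ with interior connected sums, along the lines of Example~\ref{ex:Kn}.
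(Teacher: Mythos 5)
Your overall strategy is parallel to the paper's: pin down $\bigl(J^+(K),w_0(K),r(K)\bigr)$ by standard moves and then produce one extra move that shifts $J^+(\wt K)$ by $2$ while fixing the other three invariants (the paper does the mirror image, shifting $J^+(K)$ by $2$ while fixing $J^+(\wt K)$). However, the move you propose rests on a false lifting claim. If the handle $G$ --- the small positive circle nested inside the large negative circle --- is attached locally near a single point $q\neq 0$, then $L(z)=z^2$ restricted to a simply connected neighbourhood of $q$ avoiding the origin is a diffeomorphism onto each sheet, so $L^{-1}(G)$ consists of two diffeomorphic copies of the \emph{entire} nested configuration, one near $\wt q$ and one near $-\wt q$. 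The two circles do not get separated onto different preimages; consequently $w(\wt K,\wt\Omega)=w(K,\Omega)=0$, and your loop insertion changes neither $J^+(K)$ nor $J^+(\wt K)$, so the decoupling step stalls. The inference ``over $q$ there lie the two points $\pm\wt q$, so the nested pair pulls back to one circle near $\wt q$ and the other near $-\wt q$'' is a non-sequitur: which lift of each circle lies on the chosen component $\wt K$ is determined by the winding number around $0$ of the arc of $K$ joining the two circles, not by the mere existence of two preimages. Separation happens exactly when that joining arc winds an \emph{odd} number of times around the origin --- a global condition incompatible with your ``local handle near $q$'' description, and precisely the device the paper uses (two adjacent parallel arcs $A_1,A_2$ whose connecting path in $K$ has odd winding about $0$; a $(II^+)$ move on them raises $J^+(K)$ by $2$ and leaves $J^+(\wt K)$ unchanged because the new double points do not lift to double points). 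Without imposing this condition the first assertion is not proved; with it, your loop-insertion variant could be made to work, but the winding numbers $w(K,\Omega)$ and $w(\wt K,\wt\Omega)$ would then still need to be computed from the (now non-local) attachment data rather than read off a local picture.

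For the second assertion your plan is the same in spirit as the paper's (arrange $K$ so that the two components of $L^{-1}(K)$ are already essentially separated, then disjoin them through inverse self-tangencies), but the step ``$K^{c/2}$ can be pulled across the origin into the right half-plane by a regular homotopy undergoing only inverse self-tangencies, dragging the attached pieces along'' is asserted, not argued: a generic such homotopy also produces direct self-tangencies and tangencies between the two components. The paper avoids this by building $K$ inside a thin strip $[-\eps,\infty)\times[-\eps,\eps]$ so that $L^{-1}(K)$ visibly consists of two copies meeting only along $|w|$ parallel arcs near the origin; some such explicit positioning is needed here as well.
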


\begin{proof}
Let $w\in 2\Z$ be a given even winding number. 
Let $K^w\subset\C^*$ be any generic immersion with $w_0(K^w)=w$ possessing two adjacent parallel arcs $A_1,A_2$ oriented in the same direction such that the path in $K^w$ from $A_1$ to $A_2$ winds an odd number of times around the origin. It has invariants 
$$\bigl(J^+(K^w),J^+(\wt{K^w})\bigr)=(2a,2b)$$
for some $a,b\in\Z$. A $(II^+)$ move pulling $A_1$ across $A_2$ increases $J^+(K^w)$ by $2$ and leaves $J^+(\wt{K^w})$ unchanged because the two new double points in $K^w$ do not give rise to double points in $\wt{K^w}$. Performing $k\in\N_0$ such operations, we obtain an immersion $K^w_{k}$ with invariants 
$$J^+(K^w_{k}) = 2a+2k\quad\text{and}\quad J^+(\wt{K^w_{k}}) = 2b.$$ 
Next we take the connected sum $K^w_{k,\ell}$ of $K^w_{k}$ and an immersion $K'$ with $w_0(K')=0$ and $J^+(K')=2\ell$, for any $\ell\in\Z$. Its lift $\wt{K^w_{k,\ell}}$ under the Levi-Civita covering is the connected sum of $\wt{K^w_{k}}$ and $K'$, so by additivity of $J^+$ we get the invariants 
\begin{equation}\label{eq:invariants}
J^+(K^w_{k,\ell}) = 2a+2k+2\ell\quad\text{and}\quad J^+(\wt{K^w_{k,\ell}}) = 2b+2\ell.
\end{equation} 
By appropriate choices of $k\in\N_0$ and $\ell\in\Z$ we can arrange arbitrary values in $2\Z\times 2\Z$ for the pair $\bigl(J^+(K^w_{k,\ell}),J^+(\wt{K^w_{k,\ell}})\bigr)$. Moreover, we can prescribe the rotation number of $K'$ to arrange the desired rotation number for $K^w_{k,\ell}$. 

Finally, suppose that $w\neq 0$. Then for any $\eps>0$ we can choose $K^w$ to be contained in the strip $[-\eps,\infty)\times[-\eps,\eps]$ such that $K^w\cap[-\eps,1]\times[-\eps,\eps]$ consists of $|w|$ parallel embedded arcs entering and exiting through $\{1\}\times[-\eps,\eps]$ and winding once (positively or negatively depending on the sign of $w$) around the origin. See Figure \ref{fig:strip}. 
\begin{figure}
\center
\includegraphics[width=100mm]{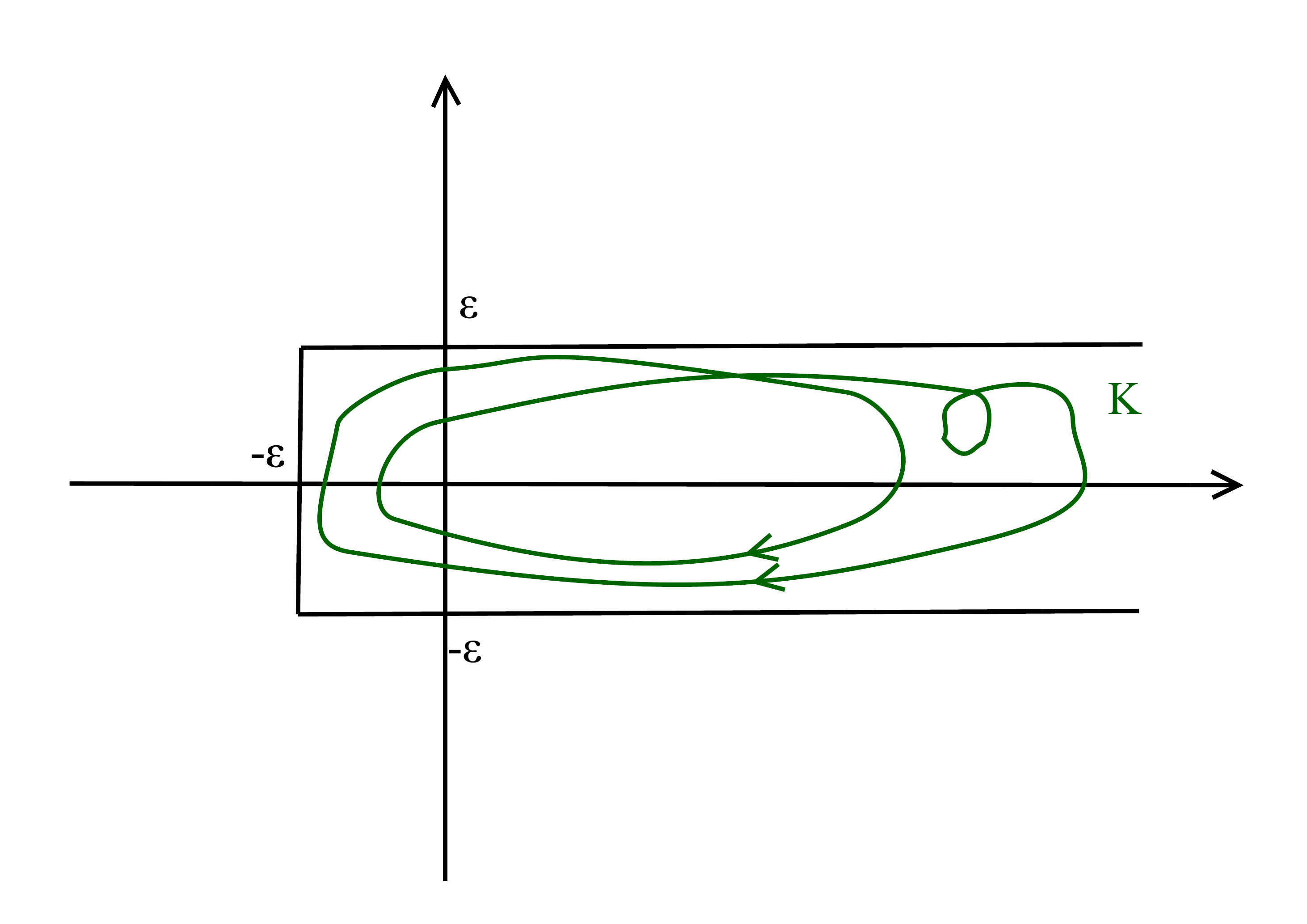}
\caption{Loop contained in a strip}
\label{fig:strip}
\end{figure}
(Note that for $w=0$ this is not possible because of the condition on the parallel arcs $A_1,A_2$.) The modifications above can be performed outside the rectangle $[-\eps,1]\times[-\eps,\eps]$ so that the resulting loop $K=K^w_{k,\ell}$ still has the same property. It follows that $L^{-1}(K)=\wt K\cup(-\wt K)$, where $\wt K\subset[-\sqrt{\eps},\infty)\times[-\sqrt{\eps},\sqrt{\eps}]$ is diffeomorphic to $K$, so $\wt K$ and $-\wt K$ can be disjoined by a regular homotopy in $\C$ undergoing only inverse self-tangencies. 
\end{proof}

Now we are ready to discuss the relations among the invariants. 
Since the parities of the winding numbers $w_E,w_M$ around $E,M$ do not change under Stark-Zeeman homotopies, we distinguish four cases. Recall that $\JJ_E$ is always even and the parities of $\JJ_0$, $\JJ_E$, $\JJ_M$ and $n$ are determined by those of $w_E,w_M$ via Lemmas~\ref{lem:n} and~\ref{lem:parities}.  

{\bf The case $w_E,w_M$ even. }
%
%
By Lemmas~\ref{lem:n} and~\ref{lem:parities}, in this case $n$ is even and $(\JJ_0,\JJ_E,\JJ_M,\JJ_{E,M})\in 2\Z\times\Z\times\Z\times2\Z/2n\Z$ satisfy relation~\eqref{eq:parities}. 

\begin{prop}\label{prop:even-even-mod4}
On generic immersed loops in $\C\setminus\{E,M\}$ with $w_E,w_M$ even the four invariants satisfy the relation
\begin{equation}\label{eq:mod4}
   \JJ_0+\JJ_E+\JJ_M+\JJ_{E,M}\equiv n\mod 4
\end{equation}
(which makes sense modulo $2n$ because in this case $2n$ is divisible by $4$).
\end{prop}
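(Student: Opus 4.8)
The plan is to exploit the reduction, already used in the proof of Lemma~\ref{lem:parities}, that the remainders mod~$2$ of $\JJ_0,\JJ_E,\JJ_M$ and the residue class of $\JJ_{E,M}$ mod $2n$ depend only on the free homotopy class $[K]$ modulo the moves $(I_E),(I_M)$. Since both sides of~\eqref{eq:mod4} are congruences mod~$4$ (and $2n$ is divisible by $4$ when $n$ is even), and since all four invariants change by multiples of~$2$ under $(II^+)$ moves and under addition/removal of small loops, it suffices to verify~\eqref{eq:mod4} mod~$4$ on one convenient representative of each class with $w_E,w_M$ even, i.e.~on the trivial loop (class $1$, $n=0$) and on the loops $K^n$ with $n$ even from~\cite[Figure 14]{cieliebak-frauenfelder-koert}. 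First I would check that a mod-$4$ congruence of this shape, once established on representatives, is genuinely invariant: the only subtle point is that $\JJ_{E,M}(K)$ lives in $2\Z/2n\Z$, so the left-hand side of~\eqref{eq:mod4} should be read as an element of $2\Z/2n\Z$ reduced further mod~$4$; I would note that $n(K)$ is a Stark-Zeeman invariant (Proposition~\ref{prop:JEM}) so that the ambient modulus $2n$ is fixed along a homotopy, and that passing from $2\Z/2n\Z$ to $\Z/4\Z$ is well-defined precisely because $4\mid 2n$.

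Next I would carry out the computation on $K^n$, $n=2m$. From Example~\ref{ex:Kn} we already have the exact values $\JJ_0(K^n)=n$, $\JJ_E(K^n)=\JJ_M(K^n)=m$, and $\JJ_{E,M}(K^n)=-n(n-1)$, and $n(K^n)=n$. Hence the left-hand side of~\eqref{eq:mod4} equals $n+m+m-n(n-1)=2m+2m-2m(2m-1)=2m(3-2m)$ as an integer, and I must check that $2m(3-2m)\equiv 2m=n\pmod 4$, i.e.~that $2m(2-2m)=4m(1-m)\equiv 0\pmod 4$, which is clear. (On the trivial loop all four invariants and $n$ vanish, so~\eqref{eq:mod4} reads $0\equiv 0$.) This disposes of all classes $(em)^n$ with $n$ even and of the class $1$; by Lemma~\ref{lem:top-loops}(b) together with Lemma~\ref{lem:n} these are exactly the free homotopy classes with $w_E,w_M$ even, so the proof is complete.

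The main obstacle is not the arithmetic but making the reduction to representatives airtight: one must be sure that the congruence~\eqref{eq:mod4} — an identity in $\Z/4\Z$ between quantities three of which are a priori only defined mod~$2$ and one only mod~$2n$ — is compatible with all four elementary moves of a Stark-Zeeman homotopy. I would handle this by checking move by move, as in the proof of Proposition~\ref{prop: J_{0} inv} and Proposition~\ref{prop:JEM}: the moves $(II^-)$ and $(III)$ change nothing; $(I_\infty)$ changes each of $\JJ_0,\JJ_E,\JJ_M,\JJ_{E,M}$ by a multiple of~$2$ and leaves $n$ fixed (an exterior loop upstairs under $B$ contributes $0$, an interior loop contributes $-2w_0(\wt K)$, both even), and similarly $(I_E),(I_M)$ change $\JJ_0,\JJ_E,\JJ_M$ by multiples of~$2$ while fixing $\JJ_{E,M}$ and $n$; so both sides of~\eqref{eq:mod4} change by the same multiple of~$2$, but since~\eqref{eq:mod4} is only a mod-$4$ statement one has to observe in addition that a single move changes the two sides by the same amount mod~$4$, which is immediate because along each move $\JJ_E\equiv\JJ_M$ continues to hold by~\eqref{eq:parities}, making the parity bookkeeping close up. Once this is in place, evaluation on $K^n$ via Example~\ref{ex:Kn} finishes the argument.
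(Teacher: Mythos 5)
Your overall strategy -- reduce to the representatives $K^n$ via homotopy and evaluate using Example~\ref{ex:Kn} -- is the same as the paper's, and your arithmetic on $K^n$ is correct. But there is a genuine gap in the reduction step. The statement is a congruence mod $4$, so you must show that the sum $L:=\JJ_0+\JJ_E+\JJ_M+\JJ_{E,M}$ is unchanged \emph{mod $4$} under a $(II^+)$ move; knowing only that each invariant changes by a multiple of $2$ gives invariance of $L$ mod $2$, which is not enough. Your attempted repair -- that the change is $0$ mod $4$ ``because $\JJ_E\equiv\JJ_M$ continues to hold'' -- does not work: each of $\JJ_E,\JJ_M$ changes by $0$ or $2$ under a $(II^+)$ move, so the relation $\JJ_E\equiv\JJ_M\pmod 2$ is preserved automatically and carries no information about whether their changes agree. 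In fact their changes genuinely can differ (one by $2$, the other by $0$).

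The missing idea, which is the heart of the paper's proof, is a geometric case analysis of which of the three lifted curves $L_E^{-1}(K)^1$, $L_M^{-1}(K)^1$, $B^{-1}(K)^1$ acquire a direct self-tangency when $K$ does. Letting $A$ be the arc of $K$ joining the two tangent points, the answer is governed by the parities of $w_E(A)$ and $w_M(A)$: if both are even all three lifts acquire a tangency ($L$ increases by $8$); if exactly one is odd only the corresponding Levi-Civita lift does ($L$ increases by $4$); if both are odd only the Birkhoff lift does ($L$ increases by $4$). The crucial point is that the number of affected lifts is always odd, so together with the change of $\JJ_0$ by $2$ the total change of $L$ is divisible by $4$. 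Without this analysis the reduction to $K^n$ is unjustified. (A smaller instance of the same issue arises when you attach interior loops to adjust the rotation number: you need that each of the four invariants drops by exactly $2$, giving a total of $-8$, not merely that each changes by an even amount.)
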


\begin{proof}
For a generic immersed loop $K\subset\C\setminus\{E,M\}$ with $w_E,w_M$ even we denote by $L_E^{-1}(K)^1$, $L_M^{-1}(K)^1$, $B^{-1}(K)^1$ one connected component of the preimage of $K$ under the Levi-Civita maps and $E$, $M$ and the Birkhoff map, respectively.  

To prove relation~\eqref{eq:mod4}, we first claim that the invariant $L:=\JJ_0+\JJ_E+\JJ_M+\JJ_{E,M}$ does not change modulo $4$ under a $(II^+)$ move on $K$. To see this, let $A$ be an arc in $K$ connecting the two points involved in the direct self-tangency. We distinguish $4$ cases according to the parities of the winding numbers $w_E(A),w_M(A)$ of $A$ around $E,M$. \\
If $w_E(A)$ and $w_M(A)$ are even
the direct self-tangency induces direct self-tangencies on $L_E^{-1}(K)^1$, $L_M^{-1}(K)^1$ and $B^{-1}(K)^1$, so $L$ increases by $8$. \\  
If $w_E(A)$ is even and $w_M(A)$ odd 
the direct self-tangency induces direct a self-tangency on $L_E^{-1}(K)^1$ but not on $L_M^{-1}(K)^1$ and $B^{-1}(K)^1$, so $L$ increases by $4$. \\  
If $w_E(A)$ is odd and $w_M(A)$ even
the direct self-tangency induces direct a self-tangency on $L_M^{-1}(K)^1$ but not on $L_E^{-1}(K)^1$ and $B^{-1}(K)^1$, so $L$ increases by $4$. \\  
If $w_E(A)$ and $w_M(A)$ are odd
the direct self-tangency induces direct a self-tangency on $B^{-1}(K)^1$ but not on $L_E^{-1}(K)^1$ and $L_M^{-1}(K)^1$, so $L$ increases by $4$. \\  
This proves the claim, which implies that the equivalence class of $L$ mod $4$ does not change under arbitrary regular homotopies of $K$ in $\C\setminus\{E,M\}$. It also does not change under the moves $(I_E)$ and $(I_M)$ through collisions at $E$ resp.~$M$ which homotopically replace a loop around $E$ resp.~$M$ by its inverse. By Lemma~\ref{lem:top-loops}(b) the free homotopy classes of loops in $\C\setminus\{E,M\}$ with even winding numbers around $E$ and $M$ modulo the moves $(I_E)$ and $(I_M)$ are in bijection to conjugacy classes $[(em)^n]$ with $n\in\N_0$ even, where $e,m$ correspond to loops around $E,M$ respectively. We can represent the conjugacy class $[(em)^n]$ by the immersed loop $K^n$ in Example~\ref{ex:Kn}. By Lemma~\ref{lem:top-loops}(c) we can therefore connect $K$ by a regular homotopy in $\C\setminus\{E,M\}$ together with moves $(I_E)$ and $(I_M)$ to the loop $K^n$, for some even $n\in\N_0$, with some loops attached to the outermost strand of $K^n$ to arrange the correct rotation number. It was computed in Example~\ref{ex:Kn} that $L(K^n)=n(3-n)\equiv n$ mod $4$, so relation~\eqref{eq:mod4} holds for $K^n$. Attaching a loop to the outermost strand of $K^n$ from the outside/inside results in attaching a similar loop to the lifts of $K^n$ under $L_E$, $L_M$ and $B$. An attachment from the outside is a $(I_\infty)$ move which leaves the four invariants (and thus $L$) unchanged. By Proposition~\ref{prop:J+}(b), an attachment from the inside decreases each of the four invariants by $2$ and thus does not change $L$ mod $4$. Hence $L(K)\equiv L(K^n)\equiv n$ mod $4$ and relation~\eqref{eq:mod4} is proved.
\end{proof}

{\em Remark. }The end of the preceding proof could be shortened by connecting $K$ by a regular homotopy to any generic immersed loop $K_0$ located outside a large disk containing $E,M$ and appealing to the proof of Proposition~\ref{prop:even-even} below to conclude $L(K)\equiv L(K_0)\equiv n$ mod $4$.
\medskip

The following proposition shows that, except for relation~\eqref{eq:mod4}, the invariants $\JJ_0,\JJ_E,\JJ_M,\JJ_{E,M}$ are completely independent. 

\begin{prop}\label{prop:even-even}
There exist generic immersed loops in $\C\setminus\{E,M\}$ with arbitrarily prescribed values of the invariants 
$$
   (\JJ_0,\JJ_E,\JJ_M,\JJ_{E,M},n,w_E,w_M,r) \in 2\Z\times \Z\times \Z\times 2\Z/2n\Z\times 2\N_0\times 2\Z\times 2\Z\times \Z
$$
satisfying relations~\eqref{eq:parities} and~\eqref{eq:mod4}. 
\end{prop}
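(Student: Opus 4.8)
The plan is to realize the prescribed data in two stages: first fix the topological part $(n,w_E,w_M,r)$ and the residue $\JJ_{E,M}\bmod 2n$ by a suitable base curve, then correct $\JJ_0,\JJ_E,\JJ_M$ by modifications that leave the first stage intact. A preliminary observation: by Lemmas~\ref{lem:n} and~\ref{lem:parities}, for $w_E,w_M$ even the number $n$ is even, the parities of $\JJ_0,\JJ_E,\JJ_M$ are forced, and relations~\eqref{eq:parities}--\eqref{eq:mod4} amount exactly to $\JJ_E\equiv\JJ_M\equiv n/2\bmod 2$ together with $L:=\JJ_0+\JJ_E+\JJ_M+\JJ_{E,M}\equiv n\bmod 4$. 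By Proposition~\ref{prop:even-even-mod4} the latter congruence holds automatically for every generic immersed loop with $w_E,w_M$ even, so it restricts only the differences between the base curve and the target.

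Normalize $E=-1$, $M=+1$, $B(z)=(z+1/z)/2$. Using Proposition~\ref{prop:J+}(c), together with the standard tricks of attaching exterior and interior loops and performing positive self-tangencies to also prescribe the rotation number, I would pick a generic immersed loop $\hat K\subset\C^*$ with $w_0(\hat K)=n$, with $J^+(\hat K)$ equal to the prescribed residue of $\JJ_{E,M}$, with the prescribed rotation number, and such that $\hat K$ lies in $\{|z|>R\}$ for some large $R$ while $0$ lies in the component of $\C\setminus\hat K$ of winding number $n$. Then $\phi(\hat K)=1/\hat K$ lies in a small neighbourhood of $0$ and is disjoint from $\hat K$, so $K_0:=B(\hat K)$ is a generic immersed loop in $\C\setminus[-1,1]$ with $B^{-1}(K_0)=\hat K\sqcup\phi(\hat K)$; hence $n(K_0)=n$, $\JJ_{E,M}(K_0)=J^+(\hat K)\bmod 2n$, and (since $B$ is an orientation-preserving diffeomorphism of the annular domains involved) $J^+(K_0)=J^+(\hat K)$, $r(K_0)=r(\hat K)$, while $w_E(K_0)=w_M(K_0)=-n$ because $\pm1\notin\{|z|>R\}$. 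Thus $K_0$ already carries the prescribed $n,r,\JJ_{E,M}$ and computable $\JJ_0,\JJ_E,\JJ_M,w_E,w_M$.

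In the second stage I would first bring $w_E$ from $-n$ to its prescribed even value, and likewise $w_M$, by connected sums of $K_0$ near $E$ and near $M$ with curves supplied by Lemma~\ref{lem:basic} --- each such summand can be chosen to carry any prescribed $J^+\in2\Z$ and, independently, any prescribed $J^+$ of its partial Levi-Civita lift. Since a curve localized near $E$ lifts under the partial Levi-Civita map $L_E$ (resp.~under $B$) to a curve localized near $E$ (resp.~near the branch point $-1$), additivity of $J^+$ under connected sum and Lemma~\ref{lem:JE} make the induced change of $(\JJ_0,\JJ_E,\JJ_M,\JJ_{E,M})$ an explicit affine function of the summands' invariants. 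What remains is to hit the prescribed $(\JJ_0,\JJ_E,\JJ_M)$ within the coset allowed by $L\equiv n\bmod 4$, using moves that preserve $w_E,w_M,n$: a positive self-tangency along an arc $A$ changes the quadruple $(\JJ_0,\JJ_E,\JJ_M,\JJ_{E,M})$ by $(2,2,2,2)$, $(2,2,0,0)$, $(2,0,2,0)$ or $(2,0,0,2)$ according to the parities of $w_E(A)$ and $w_M(A)$ --- an induced self-tangency occurring on $\tilde K_E$ iff $w_E(A)$ is even, on $\tilde K_M$ iff $w_M(A)$ is even, and on the Birkhoff lift iff $w_E(A)+w_M(A)$ is even --- and adding a small interior loop in a winding-one region disjoint from $E$ and $M$ changes it by $(-2,-2,-2,-2)$, by Proposition~\ref{prop:J+}(b) applied simultaneously to $K$ and its three lifts; if the residue $\JJ_{E,M}\bmod 2n$ still needs to be shifted, this is achieved by an interior connected sum performed on the Birkhoff lift, the change of $J^+$ being given exactly (hence controlled modulo $2n$) by Corollary~\ref{cor:int-sum}. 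Each of these moves changes $L$ by a multiple of $4$, and a finite integral computation --- which I expect to be straightforward once the effect of every move on all four columns is tabulated --- shows that they span the sublattice of $2\Z\times2\Z\times2\Z\times2\Z/2n\Z$ cut out by $L\equiv n\bmod 4$. Finally, any remaining discrepancy in the rotation number is absorbed by adding exterior loops far from $E$ and $M$, which changes no other invariant.

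The hard part will be keeping the Birkhoff column $\JJ_{E,M}$ consistent: since $B$ is branched at \emph{both} $E$ and $M$, every correction made near $E$ or $M$ generically moves $J^+$ of the Birkhoff lift as well, and one has to check that, after all the winding-number corrections, the residue $\JJ_{E,M}\bmod 2n$ can still be driven to any value in its $L$-constrained coset while $\JJ_0,\JJ_E,\JJ_M$ are set --- i.e.~that the integral linear system assembled in the second stage is always solvable. This is exactly where the extra freedom of Lemma~\ref{lem:basic} (decoupling $J^+$ of a loop in $\C^*$ from $J^+$ of its $z^2$-lift, which is what makes the near-$E$ and near-$M$ summands flexible enough) and the precise $J^+$-identity of Corollary~\ref{cor:int-sum} for interior connected sums (which converts a prescribed change of $J^+$ of the Birkhoff lift into an exterior, hence harmless, modification of $K$ via the inversion $\phi$) are needed.
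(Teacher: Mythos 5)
Your overall architecture is the same as the paper's: a base piece near infinity with prescribed invariants, pieces near $E$ and near $M$ supplied by Lemma~\ref{lem:basic}, glued by (interior) connected sums whose effect on $J^+$ is controlled by Corollary~\ref{cor:int-sum}, with the congruence $\JJ_0+\JJ_E+\JJ_M+\JJ_{E,M}\equiv n\bmod 4$ recognized as automatic via Proposition~\ref{prop:even-even-mod4}. But the proposal stops exactly where the proof begins. The entire content of the proposition is the surjectivity of the map from the construction data onto the constrained sublattice of $2\Z\times\Z\times\Z\times2\Z/2n\Z$, and you defer this (``a finite integral computation --- which I expect to be straightforward \dots'') without writing down the effect of a single move on all four invariants simultaneously. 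The paper's proof consists almost entirely of this computation: it evaluates $\JJ_0(K),\JJ_E(K),\JJ_M(K),\JJ_{E,M}(K)$ for $K=(K_0\#_iK_E)\#_iK_M$ as explicit affine functions of the six parameters $(j_0^1,j_0^2,j_E^1,j_E^2,j_M^1,j_M^2)\in(2\Z)^6$ and then checks that the resulting integer $4\times6$ matrix hits every quadruple compatible with relations~\eqref{eq:parities} and~\eqref{eq:mod4}. (The computation does work out --- the linear parts are, mod the cross terms in winding and rotation numbers, the six even-weight vectors $(1,0,1,0),(0,1,0,1),(1,1,0,0),(0,0,1,1),(1,0,0,1),(0,1,1,0)$, which generate the index-two sublattice --- but that is precisely the verification you have not done, and without it nothing is proved.)

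Two subsidiary points would also need repair in a complete write-up. First, since your $K_0=B(\hat K)$ winds $n$ times around both $E$ and $M$, the summands near $E$ and $M$ sit in a component of $\C\setminus K_0$ of winding number $n\neq 0$, so the gluings are \emph{interior} connected sums; $J^+$ is then not additive, and the correction term $-2w_0(K_1)(r(K_2)+1)$ of Corollary~\ref{cor:int-sum} (together with its analogues in the three lifts, where the relevant winding numbers are $\bar w_0=w_0/2$ rather than $w_0$) must be tracked through all four invariants --- your phrase ``additivity of $J^+$ under connected sum'' glosses over exactly the terms that produce the inhomogeneous part of the linear system. Second, your self-tangency moves $(2,2,2,2),(2,2,0,0),(2,0,2,0),(2,0,0,2)$ can only be applied with non-negative multiplicity (a negative passage requires a pre-existing direct tangency configuration), and their realizability with prescribed parities of $w_E(A),w_M(A)$ on the given curve is not addressed; the paper avoids both issues by channelling all sign freedom through the summands' $J^+$-values, which Lemma~\ref{lem:basic} lets one choose arbitrarily in $2\Z$ of either sign. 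In short: right skeleton, correct inventory of tools, but the decisive linear-algebra-over-$\Z$ step that constitutes the theorem is asserted rather than carried out.
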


\begin{proof}
Using Lemma~\ref{lem:basic}, we pick an immersion $K_E\subset D_E\setminus\{E\}$ located in a small disk $D_E$ around $E$ with prescribed invariants
$$
   \bigl(\JJ_0(K_E),\JJ_E(K_E),w_E(K_E),r(K_E)\bigr) = (j_E^1,j_E^2,w_E,r_E)\in 2\Z\times 2\Z\times 2\Z\times\Z.
$$
(Note that $\JJ_0(K_E)=J^+(K_E)+w_E(K_E)^2/2$ and $\JJ_E(K_E)=J^+(\tilde K_E)$ for a component $\tilde K_E$ of its lift under the Levi-Civita map around $E$.) 
Similarly, we pick an immersion $K_M\subset D_M\setminus\{M\}$ located in a small disk $D_M$ around $M$ with prescribed invariants
$$
   \bigl(\JJ_0(K_M),\JJ_M(K_M),w_M(K_M),r(K_M)\bigr) = (j_M^1,j_M^2,w_M,r_M)\in 2\Z\times 2\Z\times 2\Z\times\Z.
$$
Finally, we pick an immersion $K_0\subset\C\setminus D_0$ located outside a large disk $D_0$ around the origin containing $D_E\cup D_M$ with prescribed invariants
$$
   \bigl(\JJ_0(K_0),\JJ^+(\tilde K_0),w_0(K_0),r(K_0)\bigr) = (j_0^1,j_0^2,w_0,r_0)\in 2\Z\times 2\Z\times 2\Z\times\Z,
$$
where $\tilde K_0$ denotes one component of the preimage of $K_0$ under the map $z\mapsto z^2$. 
Note that
$$
   w_E(K_M)=w_M(K_E)=0. 
$$
Consider now the iterated interior connected sum
$$
   K := (K_0\#_iK_E)\#_iK_M. 
$$
(Recall that the interior connected sum can be defined after possibly modifying $K_0,K_E,K_M$ without changing their invariants, and it depends on choices, which will be irrelevant for the following discussion.) 
This is a generic immersed loop in $\C\setminus\{E,M\}$ whose invariants we now compute. In view of~\eqref{eq:rot-int-sum}, its winding and rotation numbers are
$$
   w_E(K)=w_0+w_E,\quad w_M(K)=w_0+w_M,\quad r(K) = r_0+\rho,
$$
where we abbreviate
$$
   \rho:= r_E+r_M+2. 
$$
Next note that
\begin{gather*}
   J^+(K_0) = j_0^1 - w_E(K_0)^2/2 - w_M(K_0)^2/2 = j_0^1 - w_0^2, \cr
   J^+(K_E) = j_E^1 - w_E^2/2,\qquad J^+(K_M) = j_M^1 - w_M^2/2.
\end{gather*}
Using this and Corollary~\ref{cor:int-sum} we compute
\begin{align*}
   J^+(K) &= J^+(K_0)+J^+(K_E)+J^+(K_M)-2w_0\rho, \cr
   \JJ_0(K) &= J^+(K) + w_E(K)^2/2 + w_M(K)^2/2 \cr
   &= J^+(K_0) +J^+(K_E)+J^+(K_M)-2w_0\rho + (w_0+w_E)^2/2 + (w_0+w_M)^2/2 \cr
   &= j_0^1 + j_E^1 + j_M^1 + w_0(w_E+w_M-2\rho). 
\end{align*}
Let us denote by $L_E^{-1}(K)^1$ one component of the preimage of $K$ under the partial regularization map at $E$, and similarly for $K_0,K_E,K_M$. Since all winding numbers around $E$ are even, we can choose the preimages such that 
$$
   L_E^{-1}(K)^1 = L_E^{-1}(K_0)^1 \#_i L_E^{-1}(K_E)^1 \#_i L_E^{-1}(K_M)^1.
$$
Let us write 
$$
   w_0 = 2\bar w_0.
$$
Then $L_E^{-1}(K_0)^1$ winds around both preimages $M_1,M_2$ with winding number $\bar w_0$ while $L_E^{-1}(K_M)^1$ only winds with winding number $w_M$ around one of them, say $M_1$, so 
$$
   w_{M_1}\bigl(L_E^{-1}(K)^1\bigr) = \bar w_0+w_M,\qquad w_{M_2}\bigl(L_E^{-1}(K)^1\bigr) = \bar w_0.
$$
Since $L_E^{-1}(K_0)^1$ is isotopic to the component $\tilde K_0$ of the preimage of $K_0$ under the map $z\mapsto z^2$, using Corollary~\ref{cor:int-sum} we find
\begin{align*}
   J^+\bigl(L_E^{-1}(K)^1\bigr) &= J^+\bigl(L_E^{-1}(K_0)^1\bigr) + J^+\bigl(L_E^{-1}(K_E)^1\bigr) + J^+\bigl(L_E^{-1}(K_M)^1\bigr) - 2\bar w_0\rho, \cr
   &= j_0^2 + j_E^2 + J^+(K_M) - 2\bar w_0\rho, \cr
   \JJ_E(K) &= J^+\bigl(L_E^{-1}(K)^1\bigr) + w_{M_1}\bigl(L_E^{-1}(K)^1\bigr)^2/2 + w_{M_2}\bigl(L_E^{-1}(K)^1\bigr)^2/2 \cr
   &= j_0^2 + j_E^2 + J^+(K_M) - 2\bar w_0\rho + (\bar w_0+w_M)^2/2 + \bar w_0^2/2 \cr
   &= j_0^2 + j_E^2 + j_M^1 + \bar w_0(\bar w_0+w_M-2\rho).
\end{align*}
Switching the roles of $E,M$ gives
$$
   \JJ_M(K) = j_0^2 + j_E^1 + j_M^2 + \bar w_0(\bar w_0+w_E-2\rho).
$$
Finally, let $B^{-1}(K)^1$ be one component of the preimage of $K$ under the Birkhoff regularization map, and similarly for $K_0,K_E,K_M$. Again we can choose the preimages such that 
$$
   B^{-1}(K)^1 = (B^{-1}(K_0)^1 \#_i B^{-1}(K_E)^1) \#_i B^{-1}(K_M)^1.
$$
Since the preimages of $K_E,K_M$ do not wind around the origin, we have
$$
   w_0\bigl(B^{-1}(K)^1\bigr) = w_0,\qquad n(K)=|w_0|. 
$$
Since $B$ looks like $L_E$ near $E$, the curve $B^{-1}(K_E)^1$ is located near $E$ and isotopic to $L_E^{-1}(K_E)^1$, thus $J^+\bigl(B^{-1}(K_E)^1\bigr) = j_E^2$ and similarly $J^+\bigl(B^{-1}(K_M)^1\bigr) = j_M^2$. On the other hand, near infinity $B$ is a disconnected 2-to-1 covering, so 
$J^+\bigl(B^{-1}(K_0)^1\bigr) = J^+(K_0) = j_0^1-w_0^2$. Using this and Corollary~\ref{cor:int-sum} we find
\begin{align*}
   J^+\bigl(B^{-1}(K)^1\bigr) &= J^+\bigl(B^{-1}(K_0)^1\bigr) + J^+\bigl(B^{-1}(K_E)^1\bigr) + J^+\bigl(B^{-1}(K_M)^1\bigr) - 2w_0\rho \cr
   &= j_0^1 - w_0^2 + j_E^2 + j_M^2 - 2w_0\rho \cr 
   &= j_0^1 + j_E^2 + j_M^2 - w_0(w_0+2\rho). 
\end{align*}
Let us now choose the rotation numbers $r_E,r_M$ such that $\rho=0$. With this simplification, the winding and rotation numbers of $K$ are
$$
   \bigl(n(K),w_E(K),w_M(K),r(K)\bigr) = \bigl(|w_0|,w_0+w_E,w_0+w_M,r_0\bigr).
$$
We see that by choosing $w_0,w_E,w_M,r_0$ we can arrange arbitrary values in $2\N_0\times 2\Z\times 2\Z\times\Z$ for this quadrupel of numbers. Fixing these choices, the four $J^+$-type invariants (still with $\rho=0$) were computed to be
\begin{align*}
   \JJ_0(K) &= j_0^1 + j_E^1 + j_M^1 + w_0(w_E+w_M), \cr 
   \JJ_E(K) &= j_0^2 + j_E^2 + j_M^1 + \bar w_0(\bar w_0+w_M), \cr
   \JJ_M(K) &= j_0^2 + j_E^1 + j_M^2 + \bar w_0(\bar w_0+w_E), \cr
   \JJ_{E,M}(K) &\equiv j_0^1 + j_E^2 + j_M^2 - w_0^2 \mod 2n(K). 
\end{align*}
Not taking the last equation modulo $2n(K)$, we view this as a system of $4$ inhomogeneous linear equations in $6$ variables $j_0^i,j_E^i,j_M^i$ ($i=1,2$) which we can choose freely in $2\Z$. Taking the second and third equations mod $2$ yields $\JJ_E(K)\equiv\JJ_M(K)\equiv n(K)^2/4\equiv n(K)/2$ mod $2$, so relation~\eqref{eq:parities} holds. Adding up the $4$ equations yields
$$
   \JJ_0(K)+\JJ_E(K)+\JJ_M(K)+\JJ_{E,M}(K)\equiv \bar w_0(2\bar w_0+w_M+w_E) \equiv n(K)^2/2\equiv n(K)$$
modulo $4$, so relation~\eqref{eq:mod4} holds as well. Inspection of the integer $4\times 6$ matrix defining the equations shows that by choosing the $6$ variables $j_0^i,j_E^i,j_M^i$ ($i=1,2$) we can change $(\JJ_0(K),\JJ_E(K),\JJ_M(K),\JJ_{E,M}(K))$ by any quadruple of even integers $(a_0.a_E,a_M,a_{E,M})$ satisfying $a_0+a_E+a_M+a_{E,M}\equiv 0$ mod $4$, and therefore arrange any values compoatible with relations~\eqref{eq:parities} and~\eqref{eq:mod4}. 
\end{proof}

{\bf The case $w_E$ odd, $w_M$ even. }
We now discuss the case with $w_{E}$ odd, $w_M$ even. The results carry over to the case $w_E$ odd, $w_M$ even by switching the roles of $E$ and $M$. By Lemmas~\ref{lem:n} and~\ref{lem:parities}, in this case $n=0$ and the invariants take values $(\JJ_0,\JJ_E,\JJ_M,\JJ_{E,M})\in(2\Z+1/2)\times 2\Z\times(2\Z+1/2)\times 2\Z$. 

We begin with the following refinement of~\cite[Proposition 6]{cieliebak-frauenfelder-koert}: 

\begin{prop}\label{prop: w_{E} odd: J_{E} J_{0}} 
For a generic immersed loop $K\subset\C\setminus\{E,M\}$ with $w_{E}(K)$ odd we have 
$$\mathcal{J}_{E} (K) =2 \mathcal{J}_{0}(K) -1.$$ 
If in addition $w_M(K)$ is even, then $\JJ_E(K)$ and $\JJ_{E,M}(K)$ are both divisible by $4$. 
\end{prop}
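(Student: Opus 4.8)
I would treat the three assertions separately: the identity $\JJ_E(K)=2\JJ_0(K)-1$, the divisibility $4\mid\JJ_E(K)$ when $w_M(K)$ is even, and the divisibility $4\mid\JJ_{E,M}(K)$ when $w_M(K)$ is even. The first follows quickly from the one-center case, the second is a parity count, and the third is where the real work lies.

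\textbf{The identity and the divisibility of $\JJ_E$.} For the identity I would combine Lemma~\ref{lem:JE} with \cite[Proposition~6]{cieliebak-frauenfelder-koert}, of which this is the announced refinement. Normalize $E=0$, so the partial Levi--Civita map at $E$ is $L_E(z)=z^2$, and note that $\tilde{K}_E=L_E^{-1}(K)$ is connected because $w_E(K)$ is odd. Viewing $K$ simply as a generic immersed loop in $\C^*=\C\setminus\{E\}$ (the point $M$ enters neither $J^+$ nor the winding number around $E$), \cite[Proposition~6]{cieliebak-frauenfelder-koert}, in the form $J^+(\tilde{K}_E)=2J^+(K)+w_E(K)^2-1$, applies; substituting this into the formula $\JJ_E(K)=J^+(\tilde{K}_E)+w_M(K)^2$ of Lemma~\ref{lem:JE} and regrouping the winding-number terms gives $2\JJ_0(K)-1$. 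The divisibility $4\mid\JJ_E(K)$ for $w_M(K)$ even is then a one-line parity count: $J^+(K)\in2\Z$ and $w_M(K)^2/2\in2\Z$, while $w_E(K)^2/2\in2\Z+\tfrac12$ because $w_E(K)$ is odd, hence $\JJ_0(K)\in2\Z+\tfrac12$ and $\JJ_E(K)=2\JJ_0(K)-1\in4\Z$.

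\textbf{The divisibility of $\JJ_{E,M}$.} Since $w_E(K)+w_M(K)$ is odd, Lemma~\ref{lem:n} gives $n(K)=0$, so $\tilde K:=B^{-1}(K)$ is connected and $\JJ_{E,M}(K)=J^+(\tilde K)$; it suffices to show $J^+(\tilde K)\equiv0\bmod4$. I would first show that $J^+(\tilde K)\bmod4$ is invariant under any regular homotopy of $K$ in $\C\setminus\{E,M\}$: at a self-tangency or triple point $q$ occurring in such a homotopy one has $q\notin\{E,M\}$, so $B$ is a local diffeomorphism near each of the two points of $B^{-1}(q)$, and since $\tilde K$ is connected both of these points lie on $\tilde K$ and the mirrored events are genuine self-tangencies, resp.\ triple points, of $\tilde K$. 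Hence a triple point or an inverse self-tangency of $K$ leaves $J^+(\tilde K)$ unchanged, while a positive passage through a direct self-tangency of $K$ induces two positive passages through direct self-tangencies of $\tilde K$ and changes $J^+(\tilde K)$ by $\pm4$. Combining this with the invariance of $\JJ_{E,M}=J^+(\tilde K)$ under the moves $(I_E)$, $(I_M)$, $(I_\infty)$ from Proposition~\ref{prop:JEM} (for $(I_\infty)$ using $w_0(\tilde K)=n(K)=0$), and using that $(I_\infty)$ realizes arbitrary changes of the rotation number of $K$, I conclude that $J^+(\tilde K)\bmod4$ depends only on the free homotopy class of $K$ modulo $(I_E)$ and $(I_M)$. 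By Lemma~\ref{lem:top-loops}(b) this class is one of $[e]$, $[m]$, $[(em)^n]$, and of these the only one with $w_E$ odd and $w_M$ even is $[e]$; so I may take $K$ to be a small circle around $E$. Since near $E$ the Birkhoff map is, up to a diffeomorphism, the Levi--Civita double cover $\zeta\mapsto\zeta^2$ branched at $E$, its preimage $\tilde K$ is an embedded circle, with $J^+(\tilde K)=0$; therefore $\JJ_{E,M}(K)=J^+(\tilde K)\equiv0\bmod4$ in general.

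\textbf{Main obstacle.} I expect the main obstacle to be the regular-homotopy invariance of $J^+(\tilde K)\bmod4$ — in particular, confirming that the two mirrored passages through direct self-tangencies on $\tilde K$ are both positive, so that $J^+(\tilde K)$ changes by exactly $\pm4$ and never by $0$. This rests on $B$ being a local diffeomorphism near both points of $B^{-1}(q)$, on the fact that a positive passage through a direct self-tangency is preserved by diffeomorphisms (since $J^+$ is a diffeomorphism invariant of planar loops), and on the connectedness of $\tilde K$, which is what turns the two mirrored tangencies into genuine self-tangencies of the single curve $\tilde K$ rather than intersections between distinct components. The remaining ingredients — invariance of $\JJ_{E,M}$ under $(I_E),(I_M),(I_\infty)$ from Proposition~\ref{prop:JEM}, the reduction to a standard representative via Lemma~\ref{lem:top-loops}, and the two parity computations — are routine.
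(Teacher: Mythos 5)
Your proposal is correct and follows essentially the same route as the paper: the identity via \cite[Proposition~6]{cieliebak-frauenfelder-koert} combined with Lemma~\ref{lem:JE}, the parity count for $4\mid\JJ_E$, and for $4\mid\JJ_{E,M}$ the observation that a direct self-tangency of $K$ lifts to two direct self-tangencies of the connected preimage $B^{-1}(K)$, so that $\JJ_{E,M}\bmod 4$ only depends on the class of $K$ modulo $(I_E)$, $(I_M)$ and regular homotopy. The only cosmetic difference is in the endgame: the paper reduces to a general immersion near $E$ and quotes the first assertion ($\JJ_{E,M}=\JJ_E\in4\Z$ there), whereas you additionally use $(I_\infty)$ to normalize the rotation number and reduce to an embedded circle around $E$ with $J^+(\tilde K)=0$; both are valid.
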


\begin{proof} 
Temporarily forgetting the singularity $M$ and applying~\cite[Proposition 6]{cieliebak-frauenfelder-koert} to the curve $K$ with $w_E(K)$ odd we get
$$J^{+} (\tilde{K}_{E}) = 2 \Bigl(J^{+} (K) + \dfrac{w_{E}^{2}(K)}{2}\Bigr) -1,$$
where $\tilde K_E$ is one component of the preimage of $K_E$ under the Levi-Civita map at $E$. 
Thus
$$J^{+} (\tilde{K}_{E}) +w_{M}^{2} (E) = 2 (J^{+} (K) + \dfrac{w_{E}^{2}(K)}{2}+\dfrac{w_{M}^{2}(K)}{2}) -1.$$
The left hand side is $\mathcal{J}_{E} (K)$ by Lemma~\ref{lem:JE}, and the right hand side is $2 \mathcal{J}_{0}(K) -1$ by the definition of $\mathcal{J}_{0} (K)$. This proves the first assertion. 

Suppose now that in addition $w_M(K)$ is even. Then divisibility of $\JJ_E(K_E)$ by $4$ follows from $\JJ_E(K)=2\JJ_0(K)-1$ and $\JJ_0(K_E)\in 2\Z+1/2$. 
For the last assertion, first note that a $(II^+)$ move on $K$ corresponds to two $(II^+)$ moves on $B^{-1}(K)$ and therefore increases $\JJ_{E,M}(K)$ by $4$. Hence the equivalence class of $\JJ_{E,M}(K)$ mod $4$ does not change under arbitrary regular homotopies of $K$ in $\C\setminus\{E,M\}$. It also does not change under the moves $(I_E)$ and $(I_M)$ through collisions at $E$ resp.~$M$ which change the winding numbers around $E$ resp.~$M$ by $\pm 2$. Now the free homotopy classes of loops in $\C\setminus\{E,M\}$ modulo the moves $(I_E)$ and $(I_M)$ are in bijection to $\Z_2\times\Z_2$, classified by their winding numbers $w_E$ and $w_M$ mod $2$. Since $w_E(K)$ is odd and $w_M(K)$ is even, and a homotopy between immersed loops in the plane with the same rotation number can be $C^0$-approximated by a regular homotopy, we can connect $K$ by a regular homotopy in $\C\setminus\{E,M\}$ together with moves $(I_E)$ and $(I_M)$ to a generic immersion $K_E$ located near $E$ with $w_E(K_E)=1$ and $w_M(K_E)=0$. By the preceding discussion we have $\JJ_{E,M}(K_E)\equiv\JJ_{E,M}(K)$ mod $4$, and $\JJ_{E,M}(K_E) = \JJ_E(K_E)$ is divisible by $4$ by the first assertion.
\end{proof}

So $\JJ_E$ is determined by $\JJ_0$ and it remains to study the invariants $(\mathcal{J}_{0}, \mathcal{J}_{M}, \mathcal{J}_{E,M})\in (2\Z+1/2)\times(2\Z+1/2)\times 4\Z$. We begin with the following (much simpler) analogue of Lemma~\ref{lem:basic} for odd winding number. 

\begin{lemma}\label{lem:4.14 odd}
For any given $(j_E,w_E,r_E)\in (2\Z+1/2)\times (2\Z+1)\times\Z$ there exists a generic immersed loop $K_E\subset\C\setminus\{E,M\}$ located in a small disk around $E$ with 
$$
   \bigl(\JJ_0(K_E),w_E(K_E),r(K_E)\bigr) = (j_E,w_E,r_E).
$$
\end{lemma}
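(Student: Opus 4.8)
The plan is to build $K_E$ explicitly by hand, starting from a standard circle around $E$ and then adjusting the invariants one at a time using local moves that change only one invariant at a time. First I would set up the normalization $E=0$, $M=1$ and pick a small disk $D_E$ around $E$ disjoint from $M$, so that $w_M$ and all winding numbers around $M$ or its preimages are automatically $0$ for any loop contained in $D_E\setminus\{E\}$; this reduces the problem to prescribing $J^+(K_E)$, $w_E(K_E)$, and $r(K_E)$, since $\JJ_0(K_E)=J^+(K_E)+w_E(K_E)^2/2$ and $\JJ_0(K_E)\in 2\Z+1/2$ is forced once $w_E(K_E)$ is odd.

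The key steps, in order: (1) Start with one of the standard curves $K_j$ from Figure~\ref{fig:standard-curves} with rotation number $j=w_E$ (odd, hence $j\neq 0$), realized inside $D_E$ as a loop winding $w_E$ times around $E$; this has $J^+(K_{w_E})=2-2|w_E|$ and $r=w_E$, so $\JJ_0=2-2|w_E|+w_E^2/2$. (2) Adjust the rotation number without changing $\JJ_0$ or $w_E$: take the connected sum with a small figure-eight curve $K_0$ (which has $J^+=0$, $r=0$, is contractible) — this changes nothing — or better, adjust $r$ by the operation of pulling an arc over itself through an inverse self-tangency, which changes neither $J^+$ nor winding numbers nor rotation number; instead, to change $r$ I would connect-sum with a small loop of rotation number $\pm 1$ placed in an exterior region so that it contributes $+1$ to $J^+$ per Proposition~\ref{prop:J+}(b)... wait, more cleanly: connect-sum with a standard curve $K_{\pm 1}$ (a small circle, $J^+(K_{\pm 1})=0$, $r=\pm 1$) — this adds $\pm 1$ to $r$ and $0$ to $J^+$ and $0$ to $w_E$, so it changes $\JJ_0$ by $0$. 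Iterating, we reach any desired $r_E$. (3) Finally, adjust $J^+$ (equivalently $\JJ_0$) in steps of $2$ without changing $w_E$ or $r$: attach a small loop to the outermost strand of the curve from the inside, i.e.\ into a bounded complementary component with winding number $1$ around it; by Proposition~\ref{prop:J+}(b) this decreases $J^+$ by $2$, and it leaves $w_E$ and $r$ unchanged (the attached loop is contractible and has zero rotation number contribution if we attach a round circle traversed so its rotation number cancels — or we pair it with an outer attachment of opposite rotation number). Conversely, a $(II^+)$ positive self-tangency move increases $J^+$ by $2$ at no cost to $w_E$, $r$. Since $\JJ_0$ must lie in $2\Z+1/2$ and the starting value $2-2|w_E|+w_E^2/2$ already lies there (as $w_E$ is odd, $w_E^2/2\in 2\Z+1/2$... check: $w_E^2\equiv 1\bmod 8$ for $w_E$ odd, so $w_E^2/2\in 1/2+4\Z\subset 2\Z+1/2$, and $2-2|w_E|\in 2\Z$), both target and start lie in the same residue class $2\Z+1/2$, so finitely many such $\pm 2$ adjustments reach $j_E$.

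The main obstacle I anticipate is bookkeeping the rotation number: the operations that adjust $J^+$ (attaching small loops, $(II^+)$ moves) and the operation that adjusts $w_E$ (starting curve choice) all have side effects on $r$, so I need to be careful to perform the $r$-adjustment \emph{last}, or to use only $r$-neutral operations for the $J^+$ and $w_E$ steps. The cleanest route is: fix $w_E$ via the standard curve $K_{w_E}$, then adjust $J^+$ to the correct value using $(II^+)$ moves and interior-loop attachments paired with exterior-loop attachments of opposite rotation so that $r$ is unaffected (each exterior loop is a $(I_\infty)$-type attachment and contributes $0$ to $J^+$ by Proposition~\ref{prop:J+}(b)), and finally correct $r$ by connect-summing with round circles $K_{\pm 1}$. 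A subtle point to verify is that all these modifications can be carried out while keeping the loop inside the small disk $D_E$ and away from $M$; this is automatic since every operation is local and the disk can be taken as the ambient plane after a diffeomorphism, exactly as in the one-center constructions of~\cite{cieliebak-frauenfelder-koert}. Given how much machinery is already set up, I would expect the actual write-up to be a short paragraph referencing Proposition~\ref{prop:J+} and the standard curves, as this is indeed (in the paper's words) a ``much simpler analogue of Lemma~\ref{lem:basic}.''
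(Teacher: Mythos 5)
Your proposal is correct and follows essentially the same route as the paper's (three-line) proof: start with a loop of the prescribed odd winding number $w_E$, use connected sums with $w=0$ loops of prescribed $J^+$ to dial in $\JJ_0$, and fix the rotation number last. The only small slip is that the rotation number of a connected sum is $r(K_1)+r(K_2)\pm 1$ (cf.~the remark after~\eqref{eq:rot-int-sum}), so summing with a round circle $K_{\pm1}$ shifts $r$ by $0$ or $\pm 2$ rather than $\pm 1$; this is harmless, since an exterior kink (which you also invoke) changes $r$ by $\pm 1$ at no cost to $J^+$ or $w_E$.
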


\begin{proof}
Begin with a loop with the desired winding number $w_E$, and take the connected sum with another loop with $w_E=0$ and prescribed $J_0$ to arrange the desired $J_0$. Finally, take a further connected sum with a loop with prescribed rotation number and $J^+=w_E=0$ to arrange the desired rotation number. 
\end{proof}

We will also need the following easy lemma on rotation numbers. 

\begin{lemma}\label{lem:rot-number}
Let $K\subset\C^*$ be an immersed loop with winding number $w_0(K)$ around the origin. If $w_0(K)$ is odd the rotation numbers of $K$ and its lift under the Levi-Civita map $L(z)=z^2$ are related by
$$
   r\bigl(L^{-1}(K)\bigr) = 2r(K) - w_0(K).
$$
If $w_0(K)$ is even the rotation numbers of $K$ and one component $L^{-1}(K)^1$ of its lift under the Levi-Civita map are related by
$$
   r\bigl(L^{-1}(K)^1\bigr) = r(K) - w_0(K)/2.
$$
\end{lemma}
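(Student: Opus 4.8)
The plan is to lift an explicit parametrization and track the change of argument of the velocity vector. First I would fix a smooth parametrization $z\colon[0,1]\to\C^*$ of $K$ with $z(0)=z(1)$ and $\dot z(t)\neq 0$ for all $t$, and choose a continuous branch $\tilde z(t)$ of $\sqrt{z(t)}$, so that $\tilde z(t)^2=z(t)$. The argument of $\tilde z$ changes by exactly half the change of the argument of $z$; over the period $[0,1]$ the latter equals $2\pi w_0(K)$, so the former equals $\pi w_0(K)$. Hence $\tilde z(1)=\tilde z(0)$ precisely when $w_0(K)$ is even, in which case $L^{-1}(K)$ has the two components $\pm\tilde z$, each a loop parametrized by $[0,1]$; when $w_0(K)$ is odd we have $\tilde z(1)=-\tilde z(0)$, so the lift closes up only after traversing $K$ twice and $L^{-1}(K)$ is the single connected loop parametrized by $t\in[0,2]$ via continuation of $\tilde z$.

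Next I would differentiate the relation $\tilde z^2=z$ to obtain $\dot{\tilde z}=\dot z/(2\tilde z)$, which is nowhere zero since $\dot z\neq 0$ and $\tilde z\neq 0$ on $\C^*$; in particular $L^{-1}(K)$ is indeed an immersed loop, so its rotation number is defined. Taking arguments gives $\arg\dot{\tilde z}=\arg\dot z-\arg\tilde z$ up to the constant $\arg 2=0$, so along any parameter interval the total change of $\arg\dot{\tilde z}$ equals the total change of $\arg\dot z$ minus the total change of $\arg\tilde z$.

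In the even case, integrating over $[0,1]$: the change of $\arg\dot z$ is $2\pi r(K)$ and the change of $\arg\tilde z$ is $\pi w_0(K)$, so the change of $\arg\dot{\tilde z}$ is $2\pi r(K)-\pi w_0(K)$; dividing by $2\pi$ yields $r\bigl(L^{-1}(K)^1\bigr)=r(K)-w_0(K)/2$. In the odd case, integrating over $[0,2]$: the change of $\arg\dot z$ is $2\cdot 2\pi r(K)$ and the change of $\arg z$ is $2\cdot 2\pi w_0(K)$, so the change of $\arg\tilde z$ is $2\pi w_0(K)$; therefore the change of $\arg\dot{\tilde z}$ over $[0,2]$ is $4\pi r(K)-2\pi w_0(K)$, and dividing by $2\pi$ gives $r\bigl(L^{-1}(K)\bigr)=2r(K)-w_0(K)$.

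The computation is elementary; the only point that needs care is the bookkeeping of the period, namely that in the odd case the connected lift is traversed over $[0,2]$ rather than $[0,1]$, together with keeping the factors of $2\pi$ straight. As a sanity check both formulas can be tested on the circle $z(t)=e^{2\pi i t}$ (odd case, $r=1$ on both sides) and on $z(t)=e^{4\pi i t}$ (even case, $1=2-1$).
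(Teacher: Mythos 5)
Your proof is correct, and it takes a genuinely different route from the paper's. The paper reduces to a normal form: using the classification of immersed loops in $\C^*$ up to regular homotopy by winding number and rotation number (a Whitney--Graustein type argument already invoked elsewhere in the paper), it replaces $K$ by a $w_0(K)$-fold covered circle with $r(K)-w_0(K)$ contractible circles attached, observes that the lift is a $w_0(K)$-fold (resp.\ $w_0(K)/2$-fold) covered circle with $2r'$ (resp.\ $r'$) attached circles according to parity, and reads off the rotation number. You instead compute directly: lifting a parametrization via a continuous branch of $\sqrt{z(t)}$, differentiating $\tilde z^2=z$ to get $\dot{\tilde z}=\dot z/(2\tilde z)$, and tracking the total change of $\arg\dot{\tilde z}=\arg\dot z-\arg\tilde z$ over the correct period ($[0,1]$ or $[0,2]$ according to parity). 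Your bookkeeping of the period in the odd case and of the factors of $2\pi$ is right, and the sanity checks confirm it. What each approach buys: yours is self-contained and needs no regular-homotopy classification, only the chain rule and the observation that the rotation number of the lift is invariant under nothing at all --- it is computed outright; the paper's is shorter modulo a classification result it uses anyway, and makes the geometric picture (covered circles plus small loops) visible, which is the form in which the lemma is applied later.
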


\begin{proof}
After a regular homotopy we may assume that $K$ consists a $w_0(K)$-fold covered circle around $0$ with $r':=r(K)-w_0(K)$ contractible circles in $\C^*$ attached. If $w_0(K)$ is odd, then $L^{-1}(K)$ consists of a $w_0(K)$-fold covered circle around $0$ with $2r'$ contractible circles in $\C^*$ attached, so its rotation number is $r\bigl(L^{-1}(K)\bigr) = w_0(K) + 2r' = 2r(K) - w_0(K)$. 
If $w_0(K)$ is even, then $L^{-1}(K)^1$ consists of a $w_0(K)/2$-fold covered circle around $0$ with $r'$ contractible circles in $\C^*$ attached, so its rotation number is $r\bigl(L^{-1}(K)^1\bigr) = w_0(K)/2 + r' = r(K) - w_0(K)/2$. 
\end{proof}

The following proposition shows that for $w_E$ odd and $w_M$ even, the invariants $\JJ_0,\JJ_M,\JJ_{E,M}$ satisfy no further relations. 

\begin{prop}\label{prop:even-odd}
There exist generic immersed loops in $\C\setminus\{E,M\}$ with arbitrarily prescribed values of the invariants
$$
   (\JJ_0,\JJ_M,\JJ_{E,M},w_E,w_M,r) \in (2\Z+1/2)\times (2\Z+1/2)\times 4\Z\times (2\Z+1)\times 2\Z\times \Z.
$$
\end{prop}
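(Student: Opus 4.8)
The plan is to mirror the construction in the proof of Proposition~\ref{prop:even-even}, realizing the prescribed invariants by an iterated interior connected sum
$$
   K=(K_0\#_iK_E)\#_iK_M
$$
of a loop $K_E$ inside a small disk around $E$ with odd winding number $w_E(K_E)$, a loop $K_M$ inside a small disk around $M$ with even winding number $w_M(K_M)$, and a loop $K_0$ outside a large disk around the origin containing both. First I would record the reductions special to this parity: by Lemma~\ref{lem:n} we have $n(K)=0$, and by Proposition~\ref{prop: w_{E} odd: J_{E} J_{0}} we have $\JJ_E=2\JJ_0-1$ together with $\JJ_{E,M}\in 4\Z$ automatically, so it suffices to realize arbitrary $(\JJ_0,\JJ_M,\JJ_{E,M})\in(2\Z+1/2)\times(2\Z+1/2)\times 4\Z$ alongside the topological data.

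For the building blocks I would use Lemma~\ref{lem:4.14 odd} to prescribe $(\JJ_0(K_E),w_E(K_E),r(K_E))$, noting that then $\JJ_E(K_E)=2\JJ_0(K_E)-1$, and that this number also equals $J^+(B^{-1}(K_E))$ since near $E$ the Birkhoff map looks like the Levi-Civita map and, $w_E(K_E)$ being odd, the lift is connected. I would use Lemma~\ref{lem:basic} to prescribe $(\JJ_0(K_M),\JJ_M(K_M),w_M(K_M),r(K_M))$ and $(\JJ_0(K_0),J^+(\wt K_0),w_0(K_0),r(K_0))$, where $\wt K_0$ is a component of the preimage of $K_0$ under $z\mapsto z^2$. (As in the definition of the interior connected sum, the blocks may first have to be modified by inverse self-tangencies, which does not change their invariants.) Then I would compute the winding and rotation numbers of $K$ — additive, with a $+2$ correction in the rotation number from~\eqref{eq:rot-int-sum} — and choose the rotation numbers of $K_E,K_M$ so that $r(K_E)+r(K_M)+2=0$; this leaves $w_E(K),w_M(K),r(K)$ ranging over all of $(2\Z+1)\times 2\Z\times\Z$.

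The heart of the argument is the computation of $\JJ_0(K),\JJ_M(K),\JJ_{E,M}(K)$ via Corollary~\ref{cor:int-sum} and additivity of $J^+$, pushing the interior connected sum through the partial Levi-Civita maps at $E,M$ and the Birkhoff map. For $\JJ_0$ and $\JJ_M$ this goes exactly as in Proposition~\ref{prop:even-even}: since $w_M(K)$ is even the Levi-Civita lift at $M$ splits into two components, we pick one, it decomposes as an interior connected sum of the corresponding lifts of $K_0,K_E,K_M$, and $L_M^{-1}(K_E)$ is an unbranched lift, hence $\cong K_E$. The new feature is $\JJ_{E,M}$: here $w_E(K)+w_M(K)$ is odd, so by Proposition~\ref{prop:4.2}(b) the Birkhoff lift $B^{-1}(K)$ is \emph{connected}, and it is obtained from the connected lift $B^{-1}(K_E)$ by attaching both components of $B^{-1}(K_0)$ — one near $\infty$, the other near $0$ and equal to its image under the inversion $\phi(z)=1/z$ — and both components of $B^{-1}(K_M)$. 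Using additivity of $J^+$ under these exterior connected sums, the identity $J^+(\phi(C))-J^+(C)=2w_0(C)\bigl(r(C)-w_0(C)\bigr)$ from~\eqref{eq: difference J+}, and invariance of $J^+$ under the planar reflection interchanging the two branches near $M$, one obtains a formula in which $\JJ_0(K_0),\JJ_0(K_E),\JJ_M(K_M)$ each enter $\JJ_{E,M}(K)$ with a factor $2$ — which is precisely what makes $\JJ_{E,M}(K)\in 4\Z$ consistent. Keeping the rotation numbers of all these lifts straight via Lemma~\ref{lem:rot-number} is the most delicate bookkeeping here, and the part I expect to be the main obstacle.

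With $r(K_E)+r(K_M)+2=0$ fixed, the three $J^+$-type invariants become inhomogeneous integer-linear functions of the parameters $\JJ_0(K_E)\in 2\Z+1/2$ and $\JJ_0(K_M),\JJ_M(K_M),\JJ_0(K_0),J^+(\wt K_0)\in 2\Z$: schematically $\JJ_0\sim\JJ_0(K_E)+\JJ_0(K_M)+\JJ_0(K_0)$, $\JJ_M\sim\JJ_0(K_E)+\JJ_M(K_M)+J^+(\wt K_0)$, and $\JJ_{E,M}\sim 2\JJ_0(K_E)+2\JJ_M(K_M)+2\JJ_0(K_0)$, plus integer corrections depending only on $(w_E(K),w_M(K),r(K))$. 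Since $\JJ_0(K_M)$ occurs only in $\JJ_0$ and $J^+(\wt K_0)$ occurs only in $\JJ_M$, while $\JJ_0(K_0)$ sweeps $\JJ_{E,M}$ through all of the coset $4\Z$, the system is solvable for any target value. The remaining thing to verify is that these parameters surject onto exactly the set cut out by the constraints — in particular that the factor-$2$ pattern in $\JJ_{E,M}$ reproduces the divisibility by $4$ forced by Proposition~\ref{prop: w_{E} odd: J_{E} J_{0}}, and that the half-integrality of $\JJ_0(K_E)$ is the sole source of the half-integrality of $\JJ_0$ and $\JJ_M$ — which is the same kind of finite inspection of a small integer matrix as at the end of the proof of Proposition~\ref{prop:even-even}.
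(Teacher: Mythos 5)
Your plan is structurally the paper's own proof: the same iterated interior connected sum $K=(K_0\#_iK_E)\#_iK_M$ with the same parity choices, Lemma~\ref{lem:4.14 odd} for $K_E$ and Lemma~\ref{lem:basic} for $K_M,K_0$, the reduction of $\JJ_E$ via Proposition~\ref{prop: w_{E} odd: J_{E} J_{0}}, and the same triangular solvability argument at the end (your observation that $\JJ_0(K_M)$ enters only $\JJ_0$, $J^+(\wt K_0)$ only $\JJ_M$, and $\JJ_0(K_0)$ sweeps $\JJ_{E,M}$ through $4\Z$ matches the paper's final $3\times 5$ system, and your point that the a priori coset constraints make the argument robust to errors in the inhomogeneous constants is sound). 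The computations of $\JJ_0$ and $\JJ_M$ indeed go exactly as in Proposition~\ref{prop:even-even}.

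The genuine gap is in the $\JJ_{E,M}$ step, precisely where you anticipated trouble but misdiagnosed it: the obstacle is not the rotation-number bookkeeping but the claim that the connected curve $B^{-1}(K)$ \emph{is} an iterated connected sum of the six pieces you list. Since $w_M(K_M)$ is even, the two components of $B^{-1}(K_M)$ both live near the branch point $M=+1$ and are in general interleaved there (they intersect each other), so $B^{-1}(K)$ does not decompose into connected summands until they have been separated. This is exactly why Lemma~\ref{lem:basic} carries its last clause --- that for $w_0\neq 0$ the two lifts can be disjoined by a regular homotopy undergoing only inverse self-tangencies, hence without changing $J^+$ --- and why the paper must choose $w_M(K_M)\neq 0$ when invoking it. Your sketch never uses this clause and imposes no such restriction, so the decomposition underlying your ``factor $2$'' formula is unjustified as stated. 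A secondary imprecision: you describe the attachments as ``exterior connected sums'' with plain additivity, but in the paper's decomposition $K'=K_2'\#_iK_1'$ with $K_2'=B^{-1}(K_0)^2\#_iB^{-1}(K_M)^2$, two of the sums are interior connected sums around the origin of the $z$-plane with nonzero winding, so the correction term $-2w_0(r+1)$ of Corollary~\ref{cor:int-sum} is needed; this only shifts the constants and does not break your surjectivity argument, but it should be said. With the disjoining step added (and $w_M\neq 0$ imposed, which costs nothing since $w_M(K)=w_0+w_M$ still realizes all even values), your outline completes to the paper's proof.
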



\begin{proof}
As in the proof of Proposition~\ref{prop:even-even} we construct $K$ as the iterated interior connected sum
$$K := (K_0\#_iK_E)\#_iK_M$$
of a loop $K_E$ near $E$, $K_M$ near $M$, and $K_0$ outside a large disk containing $E$ and $M$. By Lemma~\ref{lem:4.14 odd} we can prescribe the invariants
$$\bigl(\JJ_{0} (K_{E}), w_{E}(K_{E}), r(K_{E})\bigr)=(j_{E}, w_{E}, r_{E}) \in (2\Z+1/2) \times (2 \Z +1) \times \Z$$ 
and by Lemma~\ref{lem:basic} we can prescribe the invariants
$$\bigl(\JJ_{0} (K_{M}),\JJ_{M} (K_{M}), w_{M}(K_{E}), r(K_{M})\bigr)=(j^{1}_{M}, j^{2}_{M}, w_{M}, r_{M}) \in 2\Z \times 2\Z \times 2\Z  \times \Z,$$
$$\bigl(\JJ_{0} (K_{0}),\JJ^{+} (\tilde{K}_{0}), w_{0}(K_{0}), r(K_{0})\bigr)=(j^{1}_{0}, j^{2}_{0}, w_{0}, r_{0}) \in 2\Z \times 2\Z \times 2\Z  \times \Z,$$
where $\tilde K_0$ denotes one component of the preimage of $K_0$ under the Levi-Civita map $L(z)=z^2$. As in the proof of Proposition~\ref{prop:even-even} we obtain
$$
   w_E(K)=w_0+w_E,\quad w_M(K)=w_0+w_M,\quad r(K) = r_0+\rho,\quad \rho:= r_E+r_M+2
$$
and (since $w_0$ and $w_M$ are even)
\begin{gather*}
   J^{+} (K_{0})=j_{0}^{1}-w_{0}^{2},\quad 
   J^{+}(K_{E})=j_{E}-w_{E}^{2}/2,\quad 
   J^{+}(K_{M})=j_{M}^{1}-w_{M}^{2}/2, \cr
   \JJ_0(K) = j_0^1 + j_E + j_M^1 + w_0(w_E+w_M-2\rho),\cr
   \JJ_M(K) = j_0^2 + j_E + j_M^2 + w_0(w_0+w_E-2\rho).
\end{gather*}
To compute $\JJ_{E.M}(K)$, let $B^{-1}(K_{0})^{1,2}$ and $B^{-1}(K_{M})^{1,2}$ be the connected components of the preimages of $K_0$ resp.~$K_M$ under the Birkhoff map $B:\C^*\to\C$. Here we label $B^{-1}(K_{0})^{1}$ the component inside the unit disk and by $B^{-1}(K_{0})^{2}$ the one outside. 
We choose $w_M\neq 0$ and arrange for $K_{M}$ the additional property in Lemma~\ref{lem:basic} that  the two components of $B^{-1}(K_{M})$ can be disjoined by a regular homotopy involving only inverse self-tangencies. We label $B^{-1}(K_{M})^{1}$ the component that is connected to $B^{-1}(K_{0})^{1}$ by the connected sum construction, and by $B^{-1}(K_{M})^{2}$ the one connected to $B^{-1}(K_{0})^{2}$. Then the preimage $B^{-1}(K)$ looks like in Figure \ref{fig:odd-even}.
\begin{figure}
\center
\includegraphics[width=100mm]{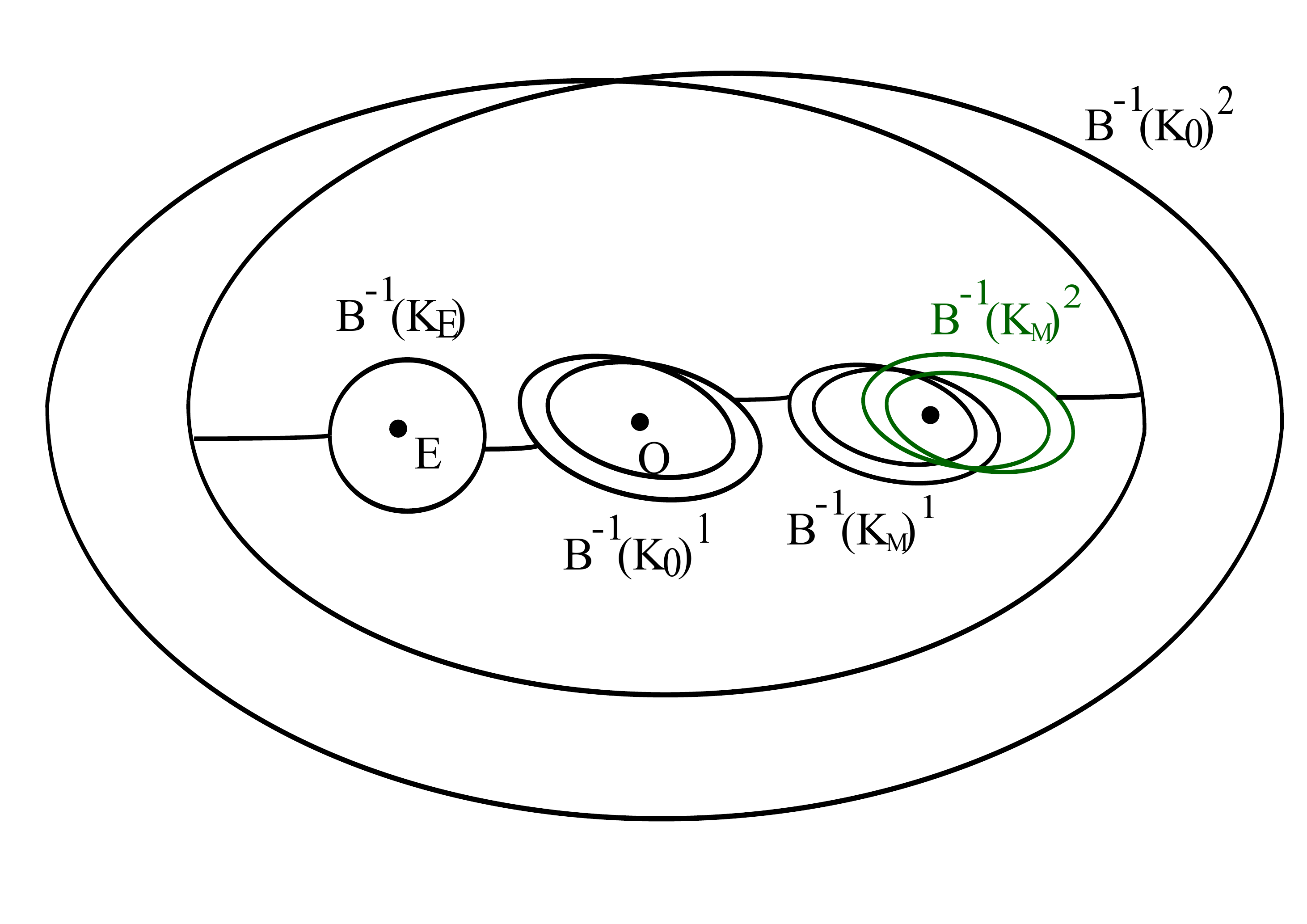}
\caption{The case $w_E$ odd, $w_M$ even}
\label{fig:odd-even}
\end{figure}
Disjoining the two components of $B^{-1}(K_{M})$ in $B^{-1}(K)$ through inverse self-tangencies and pushing $B^{-1}(K_0)^1$ away from $0$ does not change $J^+$, so it leads to a curve $K'$ with $\JJ^+(K')=\JJ_{E,M}(K)$ which can be written as an iterated connected/interior connected sum
$$K'=K'_2\#_iK'_1$$ 
with
$$K_2'=B^{-1}(K_{0})^{2} \#_i  B^{-1}(K_{M})^{2},\quad 
K_1'=\bigl(B^{-1}(K_{E})\#B^{-1}(K_{0})^{1}\bigr)\# B^{-1}(K_{M})^{1}.$$
Note that since the interior connected sums are formed by positive bases, the mirrored connected sums involved are also formed by positive bases, so their rotation numbers obey formula~\eqref{eq:rot-int-sum}.

To compute $J^{+}(K')$ (and thus $\JJ_{E, M} (K)$), recall that the Birkhoff map behaves like the map $z\mapsto z/2$ near infinity and like the respective Levi-Civita maps near $E$ and $M$. In particular, $B^{-1}(K_{0})^{2}$ is diffeomorphic to $K_{0}$ and thus has the same invariants. Using this and Lemma~\ref{lem:rot-number}, we compute the rotation numbers
$$
   r\bigl(B^{-1}(K_0)^{2}\bigr)=r_0,\quad 
   r\bigl(B^{-1}(K_E)\bigr)=2r_E-w_E,\quad 
   r\bigl(B^{-1}(K_M)^{1,2}\bigr)=r_M-w_M/2.
$$
From Lemma~\ref{lem:rot-win} we infer
$$r(B^{-1}(K_0)^1)=r_0-2w_0,$$
whence in view of formula~\eqref{eq:rot-int-sum}
\begin{align*}
   r(K_1')
   &= r(B^{-1} (K_{E}))+r(B^{-1}(K_{0})^{1}) + r(B^{-1}(K_{M})^{1})+2  \cr 
   &= 2r_{E}-w_{E}+r_{0}-2w_0+r_M-w_M/2+2.
\end{align*}
Using repeatedly equation~\eqref{eq: difference J+}, Corollary~\ref{cor:int-sum} and Proposition~\ref{prop: w_{E} odd: J_{E} J_{0}}, we now compute the $J^+$-invariants:
\begin{align*} 
   J^+\bigl(B^{-1}(K_0)^2\bigr) &= J^{+}(K_{0}) = j_0^1-w_0^2, \cr 
   J^+\bigl(B^{-1}(K_M)^{1,2}\bigr) &= j_M^2, \cr 
   J^+(K_2') &= J^+\bigl(B^{-1}(K_0)^2\bigr) + J^+\bigl(B^{-1}(K_M)^2\bigr) \cr
   &\ \ \ - 2w_0\bigl(B^{-1}(K_0)^2\bigr) \bigl(r(B^{-1}(K_M)^2)+1\bigr) \cr
   &= j_0^1-w_0^2+j_M^2-2w_0(r_M-w_M/2+1), \cr
   J^+\bigl(B^{-1}(K_E)\bigr) &= 2J^+(K_E)+w_E(K_E)^2-1 = 2j_E-w_E^2+w_E^2-1 \cr
   &= 2j_E-1, \cr
   J^+\bigl(B^{-1}(K_0)^1\bigr) &= J^+\bigl(B^{-1}(K_0)^2\bigr)+2w_0(r_0-w_0) \cr
   &= j_0^1-w_0^2+2w_0(r_0-w_0) = j_0^1-3w_0^2+2w_0r_0, \cr
   J^+(K_1') &= J^+\bigl(B^{-1}(K_E)\bigr) + J^+\bigl(B^{-1}(K_0)^1\bigr) + J^+\bigl(B^{-1}(K_M)^1\bigr) \cr
   &= 2j_E-1 + j_0^1-3w_0^2+2w_0r_0 + j_M^2, \cr
   \JJ_{E,M}(K) &= J^+(K') = J^+(K_2') + J^+(K_1') - 2w_0(K_2')\bigl(r(K_1')+1\bigr) \cr
   &= j_0^1-w_0^2+j_M^2-2w_0(r_M+1)+w_0w_M \cr
   &\ \ \ + 2j_E-1+j_0^1-3w_0^2+2w_0r_0+j_M^2 \cr
   &\ \ \ -2w_0(2r_{E}-w_{E}+r_{0}+r_M-2w_0-w_M/2+2+1) \cr
   &= 2j_0^1+2j_E+2j_M^2+2w_0(w_E+w_M)-4w_0\rho-1.
\end{align*}
Let us now choose the rotation numbers $r_E,r_M$ such that $\rho=0$. With this simplification, the winding and rotation numbers of $K$ are
$$
   \bigl(w_E(K),w_M(K),r(K)\bigr) = \bigl(w_0+w_E,w_0+w_M,r_0\bigr).
$$
We see that by fixing some $w_M\neq 0$ (which was needed above in order to apply Lemma~\ref{lem:basic}) and varying $w_0,w_E,r_0$ we can arrange arbitrary values in $\Z\times 2\Z\times\Z$ for this triple of numbers. Fixing these choices, the three $J^+$-like invariants (still with $\rho=0$) were computed to be
\begin{align*}
   \JJ_0(K) &= j_0^1 + j_E + j_M^1 + w_0(w_E+w_M) \in 2\Z+1/2, \cr
   \JJ_M(K) &= j_0^2 + j_E + j_M^2 + w_0(w_0+w_E)\in 2\Z+1/2, \cr
   \JJ_{E,M}(K) &= 2j_0^1+2j_E+2j_M^2+2w_0(w_E+w_M)-1\in 4\Z.
\end{align*}
We view this as a system of $3$ inhomogeneous linear equations in $5$ variables $(j_E,j_M^1,j_M^2,j_0^1,j_0^2)\in(2\Z+1/2)\times 2\Z\times 2\Z\times 2\Z\times 2\Z$ which we can choose freely. 
Inspection of the integer $3\times 5$ matrix defining the equations shows that by varying $(j_E,j_M^1,j_M^2,j_0^1,j_0^2)$ we can change $(\JJ_0(K),\JJ_M(K),\JJ_{E,M}(K))$ by any triple in $2\Z\times 2\Z\times 4\Z$, and therefore arrange any values in $(2\Z+1/2)\times (2\Z+1.2)\times 4\Z$. 
\end{proof}


{\bf The case $w_E,w_M$ odd. }
By Lemmas~\ref{lem:n} and~\ref{lem:parities}, in this case $n$ is odd and $(\JJ_0,\JJ_E,\JJ_M,\JJ_{E,M})\in (2\Z+1)\times (2\Z+1)\times (2\Z+1)\times2\Z/2n\Z$. Moreover, Proposition~\ref{prop: w_{E} odd: J_{E} J_{0}} immediately implies

\begin{cor} 
If $w_{E} (K)$ and $w_{M} (K)$ are both odd, then
$$\mathcal{J}_{E}(K)=\mathcal{J}_{M}(K)=2 \mathcal{J}_{0}(K) -1. $$
\end{cor}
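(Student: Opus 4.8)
The plan is to derive the corollary directly from Proposition~\ref{prop: w_{E} odd: J_{E} J_{0}} by exploiting the symmetry between the roles of $E$ and $M$. The key observation is that Proposition~\ref{prop: w_{E} odd: J_{E} J_{0}} states that whenever $w_E(K)$ is odd, one has $\JJ_E(K) = 2\JJ_0(K) - 1$, with no hypothesis whatsoever on $w_M(K)$. Applying this verbatim under the assumption that $w_E(K)$ is odd gives the first half of the claimed chain of equalities, namely $\JJ_E(K) = 2\JJ_0(K) - 1$.

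For the second half, I would invoke the remark made at the beginning of the proof of the proposition describing $\JJ_M$: the invariant $\JJ_M$ is obtained from $\JJ_E$ by switching the roles of $E$ and $M$ throughout the construction (partial Levi-Civita regularization at $M$ instead of $E$). Since Proposition~\ref{prop: w_{E} odd: J_{E} J_{0}} is symmetric in this interchange, its analogue reads: if $w_M(K)$ is odd then $\JJ_M(K) = 2\JJ_0(K) - 1$. Note that $\JJ_0$ is manifestly symmetric in $E$ and $M$ (its definition involves $w_E(K)^2/2 + w_M(K)^2/2$), so the right-hand side is literally the same quantity in both statements. Under the hypothesis of the corollary both $w_E(K)$ and $w_M(K)$ are odd, so both statements apply, yielding $\JJ_E(K) = 2\JJ_0(K) - 1 = \JJ_M(K)$, which is exactly the asserted identity.

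There is essentially no obstacle here: the corollary is a formal consequence of Proposition~\ref{prop: w_{E} odd: J_{E} J_{0}} together with the $E \leftrightarrow M$ symmetry that is already built into the definitions of $\JJ_E$, $\JJ_M$, and $\JJ_0$. The only point requiring a sentence of justification is that the additional conclusion of Proposition~\ref{prop: w_{E} odd: J_{E} J_{0}} concerning divisibility by $4$ plays no role and is not needed; we use only the first displayed equality in that proposition's statement, which is precisely the part valid without any parity assumption on the other winding number.

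\begin{proof}
Since $w_E(K)$ is odd, Proposition~\ref{prop: w_{E} odd: J_{E} J_{0}} gives $\mathcal{J}_E(K) = 2\mathcal{J}_0(K) - 1$. The construction of $\mathcal{J}_M$ is obtained from that of $\mathcal{J}_E$ by interchanging the roles of $E$ and $M$, and the invariant $\mathcal{J}_0$ is symmetric under this interchange. Hence the same proposition, applied with $E$ and $M$ exchanged, shows that if $w_M(K)$ is odd then $\mathcal{J}_M(K) = 2\mathcal{J}_0(K) - 1$. Under the hypothesis of the corollary both winding numbers are odd, so both identities hold and we conclude $\mathcal{J}_E(K) = \mathcal{J}_M(K) = 2\mathcal{J}_0(K) - 1$.
\end{proof}
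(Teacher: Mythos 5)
Your proof is correct and matches the paper's reasoning: the paper gives no separate argument for this corollary, stating only that Proposition~\ref{prop: w_{E} odd: J_{E} J_{0}} ``immediately implies'' it, and the intended deduction is exactly your two applications of the first identity in that proposition (once as stated, once with $E$ and $M$ interchanged), noting that $\mathcal{J}_0$ is symmetric in $E$ and $M$. You are also right that only the first assertion of the proposition (which needs no parity hypothesis on the other winding number) is used, and the divisibility-by-$4$ statement is irrelevant here.
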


So $\JJ_E,\JJ_M$ are determined by $\JJ_0$. The following proposition shows that $\mathcal{J}_{0}$ and $\mathcal{J}_{E, M}$ satisfy no further relations. 

\begin{prop}\label{prop:odd-odd}
There exist generic immersed loops in $\C\setminus\{E,M\}$ with arbitrarily prescribed values of the invariants
$$
   (\JJ_0,\JJ_{E,M},n,w_E,w_M,r) \in (2\Z+1)\times 2\Z/2n\Z\times (2N_0+1)\times (2\Z+1)\times (2\Z+1) \times \Z.
$$
\end{prop}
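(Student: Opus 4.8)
The plan is to mimic the constructions in the proofs of Propositions~\ref{prop:even-even} and~\ref{prop:even-odd}, realizing the prescribed data by an iterated interior connected sum
$$
   K:=(K_0\#_iK_E)\#_iK_M,
$$
where $K_E$ lies in a small disk around $E$, $K_M$ in a small disk around $M$, and $K_0$ outside a large disk containing $E$ and $M$. Since $w_E(K),w_M(K)$ will be odd, Proposition~\ref{prop: w_{E} odd: J_{E} J_{0}} (applied at both $E$ and $M$) gives $\JJ_E(K)=\JJ_M(K)=2\JJ_0(K)-1$, so only $\JJ_0$ and $\JJ_{E,M}$ need to be prescribed; in particular, unlike in the even--even and odd--even cases, the lift of $K_0$ under $z\mapsto z^2$ plays no role. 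As $n(K)=|w_0(K_0)|$ in this construction (the $B$-preimages of $K_E,K_M$, being contained in small disks around $\pm1$, never wind around the origin), I would take $w_0(K_0)$ odd and $w_E(K_E),w_M(K_M)$ even, so that $w_E(K)=w_0(K_0)+w_E(K_E)$ and $w_M(K)=w_0(K_0)+w_M(K_M)$ are odd. I would choose $K_E$ by Lemma~\ref{lem:basic} with prescribed $\bigl(J^+(K_E),J^+(\wt K_E),w_E(K_E),r(K_E)\bigr)\in 2\Z\times 2\Z\times 2\Z\times\Z$ (here $\wt K_E$ is a component of the Levi-Civita lift at $E$, and $\JJ_0(K_E)=J^+(K_E)+w_E(K_E)^2/2$, $\JJ_E(K_E)=J^+(\wt K_E)$ because $K_E$ does not wind around $M$), similarly for $K_M$, and $K_0$ by a standard connected-sum construction (a multiply covered circle around the origin summed with loops of winding number $0$) with prescribed $\bigl(\JJ_0(K_0),w_0(K_0),r(K_0)\bigr)\in(2\Z+1)\times(2\Z+1)\times\Z$. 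Choosing $r(K_E),r(K_M)$ so that $\rho:=r(K_E)+r(K_M)+2=0$, formula~\eqref{eq:rot-int-sum} gives $r(K)=r(K_0)$, so the topological data $(n,w_E,w_M,r)$ is prescribed freely via $w_0(K_0)=\pm n$, $w_E(K_E)$, $w_M(K_M)$, $r(K_0)$.

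Next I would compute the two $J^+$-like invariants just as in the proofs of Propositions~\ref{prop:even-even} and~\ref{prop:even-odd}. Applying Corollary~\ref{cor:int-sum} twice writes $\JJ_0(K)$ as a linear function of $j_0^1:=\JJ_0(K_0)$, $j_E^1:=\JJ_0(K_E)$, $j_M^1:=\JJ_0(K_M)$ plus an explicit term in $n,w_E(K),w_M(K)$. For $\JJ_{E,M}(K)$, recall that $B^{-1}(K)$ has two components because $w_E(K)+w_M(K)$ is even, and that $B$ restricts to a diffeomorphism near $\infty$ (behaving like $z\mapsto z/2$) and to the respective Levi-Civita maps near the branch points $E,M$. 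Using the disjoining property of Lemma~\ref{lem:basic} to separate the two components of $B^{-1}(K_E)$ and the two of $B^{-1}(K_M)$, the component $B^{-1}(K)^1$ containing the outer lift of $K_0$ (which is regularly homotopic to $K_0$, hence has $J^+=J^+(K_0)=j_0^1-n^2$) becomes an iterated interior connected sum of this outer lift with one component $C_E$ of $B^{-1}(K_E)$, with $J^+(C_E)=J^+(\wt K_E)=:j_E^2$, and one component $C_M$ of $B^{-1}(K_M)$, with $J^+(C_M)=:j_M^2$. Since the outer lift of $K_0$ winds $w_0(K_0)$ around the origin while $C_E,C_M$ do not, Corollary~\ref{cor:int-sum} together with Lemma~\ref{lem:rot-number} (yielding $r(C_E)=r(K_E)-w_E(K_E)/2$, $r(C_M)=r(K_M)-w_M(K_M)/2$) gives
$$
   J^+\bigl(B^{-1}(K)^1\bigr)=j_0^1-n^2+j_E^2+j_M^2-2w_0(K_0)\bigl(r(C_E)+r(C_M)+2\bigr),
$$
whence, reducing modulo $2n$ and using $n^2\equiv n\pmod{2n}$ (as $n-1$ is even),
$$
   \JJ_{E,M}(K)\equiv j_0^1+j_E^2+j_M^2-n\pmod{2n}.
$$

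Finally I would solve for the prescribed values. By Lemma~\ref{lem:basic} the pairs $(j_E^1,j_E^2)$ and $(j_M^1,j_M^2)$ are independently free in $2\Z\times 2\Z$ and $j_0^1$ is free in $2\Z+1$; moreover $\JJ_0(K)$ depends only on $j_0^1,j_E^1,j_M^1$ while $\JJ_{E,M}(K)$ depends only on $j_0^1,j_E^2,j_M^2$. Hence, after fixing $j_0^1$, one first chooses $(j_E^2,j_M^2)$ to hit the prescribed value of $\JJ_{E,M}(K)\in 2\Z/2n\Z$ (possible because $j_0^1+j_E^2+j_M^2-n$ runs over all odd integers, hence over all even residues mod $2n$, consistently with $\JJ_{E,M}$ being always even as in Lemma~\ref{lem:parities}), and then chooses $(j_E^1,j_M^1)$ to hit the prescribed value of $\JJ_0(K)\in 2\Z+1$. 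The only delicate point is the edge case where $w_E(K_E)$ or $w_M(K_M)$ vanishes, i.e.\ $w_E(K)=\pm n$ or $w_M(K)=\pm n$, in which the disjoining hypothesis $w_0\neq 0$ of Lemma~\ref{lem:basic} fails: by the sign choice $w_0(K_0)=\pm n$ at most one block is affected, and for that block a small contractible loop near the corresponding primary automatically has $B$-preimage consisting of two disjoint small contractible loops with the same $J^+$ (so that $\JJ_0=\JJ_E=J^+$ there), which still leaves enough freedom in the remaining block. The main obstacle is the topological bookkeeping of $B^{-1}(K)$ in this case: both $K_E$ and $K_M$ have even local winding, so each of their $B$-preimages splits into two components near the respective branch point, and one must correctly match these---together with the two components of $B^{-1}(K_0)$---into the two components of $B^{-1}(K)$ in order to legitimately decompose $B^{-1}(K)^1$ as an iterated connected/interior connected sum and apply additivity of $J^+$ and Corollary~\ref{cor:int-sum}; this is the argument carried out for one splitting in the proof of Proposition~\ref{prop:even-odd}, here performed for two.
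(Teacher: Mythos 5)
Your proposal follows essentially the same route as the paper's proof: the same iterated interior connected sum $(K_0\#_iK_E)\#_iK_M$ with $w_0(K_0)$ odd and $w_E(K_E),w_M(K_M)$ even, the same input lemmas (Lemma~\ref{lem:basic} for $K_E,K_M$, the elementary connected-sum construction of Lemma~\ref{lem:4.14 odd} for $K_0$, and Proposition~\ref{prop: w_{E} odd: J_{E} J_{0}} to dispose of $\JJ_E,\JJ_M$), and the same concluding linear-algebra step; your congruence $\JJ_{E,M}(K)\equiv j_0^1+j_E^2+j_M^2-n$ agrees with the paper's $j_0^1+j_E^2+j_M^2-w_0^2 \bmod 2n$. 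The only blemish is the parenthetical claim that $j_0^1+j_E^2+j_M^2-n$ ``runs over all odd integers'': it is in fact always even (odd plus even plus even minus odd), which is exactly what you need to hit every even residue mod $2n$.
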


\begin{proof}
As in the proof of Proposition~\ref{prop:even-even} we construct $K$ as the iterated interior connected sum
$$K := (K_0\#_iK_E)\#_iK_M$$
of a loop $K_E$ near $E$, $K_M$ near $M$, and $K_0$ outside a large disk containing $E$ and $M$. We choose the winding number $w_0(K_0)$ odd and the winding numbers $w_E(K_E),w_M(K_M)$ even. Then by Lemma~\ref{lem:4.14 odd} we can prescribe the invariants
$$\bigl(\JJ_0(K_0), w_0(K_0), r(K_0)\bigr)=(j_0^1, w_0, r_0) \in (2\Z+1) \times (2\Z +1) \times \Z$$ 
and by Lemma~\ref{lem:basic} we can prescribe the invariants
$$\bigl(\JJ_0(K_E),\JJ_E(K_E), w_E(K_E), r(K_E)\bigr)=(j^1_E, j^2_E, w_E, r_E) \in 2\Z \times 2\Z \times 2\Z  \times \Z,$$
$$\bigl(\JJ_0(K_M),\JJ_M(K_M), w_M(K_M), r(K_M)\bigr)=(j^1_M, j^2_M, w_M, r_M) \in 2\Z \times 2\Z \times 2\Z  \times \Z.$$
As in the proof of Proposition~\ref{prop:even-even} we obtain
$$
   w_E(K)=w_0+w_E,\quad w_M(K)=w_0+w_M,\quad r(K) = r_0+\rho,\quad \rho:= r_E+r_M+2
$$
and (since $w_E$, $w_M$ are even and the parity of $w_0$ played no role in the computation of these two invariants)
\begin{align*}
   \JJ_0(K) &= j_0^1 + j_E^1 + j_M^1 + w_0(w_E+w_M), \cr 
   \JJ_{E,M}(K) &\equiv j_0^1 + j_E^2 + j_M^2 - w_0^2 \mod 2n(K),
\end{align*}
where $n(K)=|w_0|$ and we have again chosen $r_E,r_M$ such that $\rho=0$. Hence by varying $(w_0,w_E,w_M,r_0)$ we can arrange arbitrary values for 
$$\bigl(n(K),w_E(K),w_M(K),r(K)\bigr) \in (2N_0+1)\times (2\Z+1)\times (2\Z+1) \times \Z,$$
and given these, by varying $(j_0^1,j_E^1,j_E^2,j_M^1,j_M^2)$ we can arrange arbitrary values for $(\JJ_0,\JJ_{E,M})\in (2\Z+1)\times 2\Z/2n\Z$. 
\end{proof}

\section{Further discussions}

\subsection{Knot types and Legendrian Knots}
As in the one-center case discussed in~\cite{cieliebak-frauenfelder-koert}, each periodic orbit of a two-center Stark-Zeeman system describes an oriented knot in the Moser-regularized energy hypersurface $\Sigma_c^M\cong\R P^3\#\R P^3$, and each generic immersion $K\subset\C\setminus\{E,M\}$ lifts (by adding its tangent direction) to an oriented knot in $\gamma\subset\R P^3\#\R P^3$ whose knot type is invariant under Stark-Zeeman homotopies. Note that according to Lemma~\ref{lem:parities} the free homotopy class of $\gamma$ is captured by the invariants $\JJ_E(K)$, $\JJ_M(K)$, and $n(K)$. The proof of~\cite[Corollary 3]{cieliebak-frauenfelder-koert} shows that every oriented knot type in $\R P^3\#\R P^3$ is realized by a Moser regularized periodic orbit in some two-center Stark-Zeeman system. A periodic orbit in $\Sigma_c^M\cong\R P^3\#\R P^3$ can be further lifted to an oriented knot in the Birkhoff regularized energy hypersurface $\Sigma_c^B\cong S^1\times S^2$ whose knot type is also invariant under Stark-Zeeman homotopies of its footpoint projection. 

As mentioned in~\cite{cieliebak-frauenfelder-koert}, it would be interesting to search for more refined invariants under one- or two-center Stark-Zeeman homotopies using invariants of their Legendrian lifts (by adding the unit conormal vectors). 

\subsection{$N$-center Stark-Zeeman systems}
The notions of planar $1$- and $2$-center Stark-Zeeman systems generalize in the obvious way to that of a planar $N$-center Stark-Zeeman system. On a given energy level, a partial Levi-Civita regularization at some subset of the $N$ centers can be defined by by going to a Riemann surface branched at these centers, see Klein and Knauf~\cite{klein-knauf}. This should give rise to $2^N$ different $J^+$-like invariants for periodic orbits of a planar $N$-center Stark-Zeeman system, which would be interesting to be further explored.



\end{document}